\documentclass[11pt]{amsart}
\usepackage{amsmath,amsthm,amsfonts,latexsym,amssymb}
\usepackage{graphicx}
\usepackage[arrow, matrix, curve]{xy}
\usepackage{float}

\setlength{\textheight}{23cm} \setlength{\textwidth}{16 cm}
\calclayout\evensidemargin .1in

\newcommand{\R}{\mathbb{R}}

\newtheorem{theorem}{Theorem}[section]
\newtheorem{lemma}[theorem]{Lemma}
\newtheorem{proposition}[theorem]{Proposition}

\theoremstyle{definition}

\newtheorem{example}[theorem]{Example}
\newtheorem{remark}[theorem]{Remark}

\newtheorem{definition}[theorem]{Definition}

\title[Compatible Relative Open Books on Relative Contact Pairs]{Compatible Relative Open Books on Relative Contact Pairs \\ via Generalized Square Bridge Diagrams}

\author{\. I. \"Ozge Ta\c{s}p\i nar}
\address{Dept. of Mathematics, Middle East Technical University, Ankara, TURKEY}
\email{isarac@metu.edu.tr}
%\thanks{The second author is partially supported by }

\author{M. Firat Arikan}
\address{Dept. of Mathematics, Middle East Technical University, Ankara, TURKEY}
\email{farikan@metu.edu.tr}
%\thanks{The first author is partially supported by }

\subjclass[2010]{57R65, 58A05, 58D27}
\keywords{Contact structure, open book, Legendrian, monodromy, square bridge, surgery}
\date{\today}

%--------------------------------------------------------------------------------------
\begin{document}
	
	\begin{abstract}
		Using square bridge position, Akbulut-Ozbagci and later Arikan gave algorithms both of which construct an explicit compatible open book decomposition on a closed contact $3$-manifold which results from a contact $(\pm 1)$-surgery on a Legendrian link in the standard contact $3$-sphere. In this article, we introduce the ``generalized square bridge position'' for a Legendrian link in the standard contact $5$-sphere and partially generalize this result to the dimension five via an algorithm which constructs relative open book decompositions on relative contact pairs.
	\end{abstract}
	
	\maketitle
	
	%-----------------------------------------------------------------------------------------------
	%-----------------------------------------------------------------------------------------------
	%-----------------------------------------------------------------------------------------------
	
	\section{Introduction}
	
	Let $Y$ be an oriented $(2n+1)$-dimensional manifold. A \textit{(co-oriented positive) contact structure} on $Y$ is a maximally non-integrable hyperplane field  $\xi=\textrm{Ker}(\alpha)$ where $\alpha$ is a smooth  $1$-form on $Y$ such that $\alpha \wedge(d\alpha)^n> 0$. The pair $(Y,\xi)$, or more simply $Y_\xi$, is called a\textit{ contact manifold} and $\alpha$ is called a \textit{contact form} for $\xi$. An \textit{open book (decomposition)} on $Y$ is a pair $(B, \pi)$ where $B$ (called \textit{the binding}) is a codimension two submanifold with a trivial normal bundle $B\times D^2$, and $\pi: Y\setminus B \rightarrow \mathbb{S}^1$ is a smooth fibration map which agrees with the angular coordinate on the $D^2-$factor near $B$. The closure of the fiber $F= \overline{\pi^{-1}(\theta)}$ is called the \textit{page} over $\theta \in \mathbb{S}^1$. The \textit{monodromy} of $(B, \pi)$ is the self-diffeomorphism $h:F\to F$ defined by the return map of the flow of the vector field $\partial/\partial \theta$. Up to \textit{contactomorphism} (a diffeomorphism respecting contact structures) an open book can be also given abstractly as a pair $(F,h)$. An open book $(B, \pi)$ and $\xi$ on $Y$ are \textit{compatible} if there exists a contact form $\alpha$ for $\xi$ such that $\alpha$ is a contact form on $B$, $d\alpha$ defines a symplectic structure on each page, and the orientation on $B$ induced from $\alpha$ agrees with the one induced from $d\alpha$ considering $B$ as the contact boundary of a symplectic page. For more details see \cite{Gd} and \cite{Et1}.\\
	
	A $k$-dimensional ($k\leq n$) submanifold $\Sigma$ of $Y_\xi$ is \textit{isotropic} if $T_p\Sigma\subset \xi_p$ for all $p \in \Sigma$. An $n$-dimensional isotropic submanifold is said to be \textit{Legendrian}. Any Legendrian $\Sigma\subset Y_\xi$ comes with a \textit{contact} (or \textit{Thurston-Bennequin}) \textit{framing} given by a vector field along $\Sigma$ which is transverse to $\xi$. For a Legendrian knot $K$ in $(\mathbb{R}^3,\xi_0^3=\textrm{Ker}(\alpha_0^3=dz+x_1dy_1))$, its contact framing can be computed as the integer
	$$ tb(K)=writhe(K)-\frac{1}{2} \# (\text{cusps of } K), $$ 
	called the \textit{Thurston-Bennequin number} of $K$, where $writhe(K)$ is the sum of the signs of the crossings in the \textit{front projection} of $K$ onto the $y_1z-$plane. In higher dimensional standard contact spaces, the Thurston-Bennequin number of a Legendrian submanifold coincides with a topological invariant and one can easily be computed (see \cite{EES}).\\
	
	A \textit{contact surgery} along a Legendrian $n$-\textit{knot} $K\subset Y_\xi$ (i.e., the image of an $n$-sphere under a Legendrian embedding) is an operation which results in a new \textit{surgered} contact manifold. The surgered contact manifold is formed by first removing a tubular neighborhood $N$ of $K$ in $Y_\xi$, and then gluing $S^n\times D^{n+1}$ back using a chosen diffeomorphism from $S^n \times S^n$ onto  $\partial N$ so that contact distributions of the pieces smootly glued together. The \textit{contact surgery coeficient} (which determines the gluing diffeomorphism) is measured relative to the contact framing of $K$. (See  Section \ref{sec:admissible-surgery} for details). In dimension three, Ding and Geiges proved the following result:
	
	\begin{theorem} [\cite{DG}] 
		Every closed contact $3$-manifold can be obtained by a contact $(\pm 1)$-surgery on a Legendrian link in the standard contact $3$-sphere $\mathbb{S}^3_{st}=(\mathbb{S}^3,\xi_{st}^3)$.
		\label{thm:suregery in 3}
	\end{theorem}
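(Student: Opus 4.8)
The plan is to work in two stages: first produce the underlying smooth manifold $M$ by surgery on a Legendrian link in $\mathbb{S}^3_{st}$, and then refine the construction so that it recovers the prescribed contact structure $\xi$ and not merely the diffeomorphism type of $M$.

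For the topological backbone I would invoke the Lickorish--Wallace theorem: the oriented $3$-manifold underlying $(M,\xi)$ is integer Dehn surgery on some link $L=L_1\cup\dots\cup L_m\subset \mathbb{S}^3$ with framings $n_1,\dots,n_m\in\Z$. A $C^0$-small perturbation puts $L$ in Legendrian position, giving a Legendrian link $\mathcal{L}$, and repeatedly stabilizing each component lowers its Thurston--Bennequin number $tb(\mathcal{L}_i)$ without changing the smooth type. A topological coefficient $n_i$ corresponds to the \emph{contact} surgery coefficient $r_i=n_i-tb(\mathcal{L}_i)$ measured against the contact framing, so by stabilizing enough I can force every $r_i\neq 0$ (indeed $r_i>0$).

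Next I would reduce the coefficients to $\pm 1$. The key step is a cancellation/continued-fraction lemma showing that a single contact $r$-surgery with $r\in\Q\setminus\{0\}$ may be replaced by a sequence of contact $(\pm1)$-surgeries along a chain of Legendrian knots obtained from the original component by Legendrian push-offs and stabilizations, the signs being read off from the continued-fraction expansion of $r$. Applying this to every component turns the diagram above into one that uses only contact $(\pm1)$-surgeries while, by the surgery-coefficient bookkeeping, still producing $M$.

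The hard part will be the second stage: the procedure so far only guarantees that \emph{some} contact structure on $M$ appears, since each stabilization choice (positive versus negative) can change the result, whereas the theorem requires the given $\xi$ on the nose. To control the contact structure I would feed in a compatible open book $(F,h)$ of $(M,\xi)$ and write its monodromy as a product of Dehn twists $h=\tau_{c_1}^{\varepsilon_1}\cdots\tau_{c_k}^{\varepsilon_k}$. Realizing a page inside a standard Legendrian model (attaching Weinstein $1$-handles to $\mathbb{S}^3_{st}$, whose boundary open book has trivial monodromy), each curve $c_j$ pushes to a Legendrian knot whose contact framing equals its page framing; a positive Dehn twist is then realized by a contact $(-1)$-surgery and a negative one by a contact $(+1)$-surgery along that knot. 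This translates the Giroux-compatible data directly into a contact $(\pm1)$-diagram whose surgered manifold is exactly $(M,\xi)$. The delicate points to check are that the push-offs carry the correct framing, that the $1$-handle model itself reduces to contact $(\pm1)$-surgery on $\mathbb{S}^3_{st}$, and that this open-book description can be reconciled with the Lickorish--Wallace description after the $(\pm1)$-reduction; alternatively, one may bypass the matching by exploiting that a contact $(+1)$- and a contact $(-1)$-surgery along Legendrian push-offs of each other cancel, which furnishes enough Kirby-type moves to adjust the homotopy class of the plane field until it equals $\xi$.
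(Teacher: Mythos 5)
This theorem is quoted from Ding--Geiges \cite{DG}; the paper does not reprove it, so there is no internal argument to compare your attempt against, and I can only judge the proposal on its own terms. Your second stage is essentially the standard modern proof and is correct in outline: take a Giroux-compatible open book $(F,h)$ for $(M,\xi)$, observe that $(F,\mathrm{id})$ supports the standard tight structure on $\#_k(\mathbb{S}^1\times\mathbb{S}^2)$, which is itself obtained from $\mathbb{S}^3_{st}$ by contact $(+1)$-surgery on a Legendrian unlink of $tb=-1$ unknots, write $h$ as a product of Dehn twists, Legendrian-realize each twisting curve on a page with contact framing equal to page framing, and convert each positive (resp.\ negative) twist into a contact $(-1)$- (resp.\ $(+1)$-) surgery via the surgery-on-a-page theorem (Theorem \ref{thm:Etnyresurgery} here). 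Since this controls the contact structure on the nose, your first stage (Lickorish--Wallace plus the continued-fraction reduction) becomes redundant; as you yourself observe, it only pins down the diffeomorphism type of $M$, and nothing from it survives into the final argument. One routine point you should make explicit: the curves $c_1,\dots,c_k$ generally intersect one another, so they must be Legendrian-realized on distinct parallel copies of the page, ordered according to the factorization of $h$.

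The genuine gap is in your proposed fallback. Using the $(\pm1)$-cancellation lemma to ``adjust the homotopy class of the plane field until it equals $\xi$'' cannot close the argument: two contact structures on the same $3$-manifold lying in the same homotopy class of tangent plane fields need not be isotopic, so matching homotopy-theoretic invariants does not upgrade ``some contact structure on $M$'' to ``the given $\xi$.'' In \cite{DG} the cancellation lemma plays a different role --- it shows that contact $(\pm1)$-surgeries are invertible (the inverse of a $(+1)$-surgery along $K$ is a $(-1)$-surgery along the dual push-off of $K$ in the surgered manifold), which is what lets one turn a $(\pm1)$-surgery taking $(M,\xi)$ to a standard model into a $(\mp1)$-surgery presentation of $(M,\xi)$ from that model. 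So keep your open-book route as the actual proof and either delete the fallback or restate it as this inversion argument.
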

	
	\indent This result implies that any closed contact $3$-manifold can be described by a contact surgery diagram where contact surgery coefficients are $\pm 1$'s. Also it was shown (see below) how contact $(\pm 1)$-surgery effects a compatible open book when a surgery knot lies on a page:
	
	\begin{theorem} [\cite{Et1}] 
		Let $(F,h)$ be an open book compatible with the contact manifold $(M^3,\eta)$. If a Legendrian knot $K$ lies on a page $F$, then  the surgered manifold $(M^{\pm 1},\eta^{\pm 1})$ obtained by the contact $(\pm 1)$-surgery along $K$ has a compatible open book $(F, h\circ {\tau_{K}}^{\mp})$ where $\tau_{K}^{\mp 1}$ denotes the negative/positive (or left-handed/right-handed) Dehn twist along $K$, respectively.
		\label{thm:Etnyresurgery}
	\end{theorem}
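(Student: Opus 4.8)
The plan is to combine one geometric observation about Legendrian knots on pages with the standard description of how Dehn surgery on a fibered knot alters the monodromy, and then to check that the contact structure produced on the modified open book is the surgered one. First I would establish that the contact (Thurston--Bennequin) framing of $K$ coincides with the \emph{page framing}, i.e.\ the framing obtained by pushing $K$ off itself inside $F$. Since $(F,h)$ is compatible with $(M^3,\eta)$, there is a contact form $\alpha$ for $\eta$ with $d\alpha$ symplectic on each page and $\alpha$ contact on $B$. Working in a standard neighbourhood of $K$ inside a page, one sees that the vector field along $K$ transverse to $\eta$ may be chosen tangent to $F$ (pointing in the page direction transverse to $K$), so the contact normal framing and the page normal framing are homotopic through framings along $K$. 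This is the familiar lemma that $tb(K)$ equals the page framing for a Legendrian sitting on a page, and I would prove it by exhibiting explicit matching vector fields in the local model.

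Second, I would translate the contact operation into a purely topological one. By definition the contact $(\pm1)$-surgery coefficient is measured relative to the contact framing of $K$; by the first step this equals the page framing, so the surgered manifold $M^{\pm1}$ is obtained from $M^3$ by the topological Dehn surgery on $K$ with coefficient $\pm1$ relative to the page framing. I would then invoke the classical relationship between such framed surgery on a curve lying on a fibre and the monodromy of the fibration: removing a tubular neighbourhood $N\cong S^1\times D^2$ of $K\subset F$ and regluing with a $\mp1$ twist relative to the page framing leaves the page $F$ unchanged as an abstract surface but inserts a Dehn twist along $K$, so the monodromy becomes $h\circ\tau_K^{\mp}$. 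The sign is fixed by the convention that contact $(-1)$-surgery (Legendrian surgery) produces a right-handed (positive) twist and contact $(+1)$-surgery a left-handed (negative) one, matching the claimed $\tau_K^{\mp}$.

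The last and hardest step is to verify \emph{Giroux compatibility}: that $(F,h\circ\tau_K^{\mp})$ supports precisely $\eta^{\pm1}$, and not merely some contact structure on the diffeomorphic manifold $M^{\pm1}$. Here I would put $\alpha$ into a Giroux normal form on $N\cong S^1\times D^2$ adapted to the page, and model the contact $(\pm1)$-surgery by an explicit regluing of $N$ in which the contact planes rotate by the prescribed amount. Using the Thurston--Winkelnkemper/Giroux recipe, the contact structure built from $(F,h\circ\tau_K^{\mp})$ can then be written down near $K$ and compared with the surgered form; away from $N$ both structures equal $\alpha$, so it suffices to produce a contactomorphism on $N$ interpolating the two normal forms.

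I expect the interpolation on $N$ to be the main obstacle: one must show that the twisting introduced in the monodromy is exactly the twisting introduced by the contact regluing, so that the two contact forms agree up to an isotopy fixing a neighbourhood of $\partial N$. This is precisely the place where the full force of the compatibility hypotheses --- that $d\alpha$ is symplectic on each page and that the orientations of $B$ induced by $\alpha$ and by $d\alpha$ agree --- must be used, since these are what pin down the sign and magnitude of the rotation of $\eta$ across the surgery region and rule out a mismatch that would produce a different contact structure in the same smooth class.
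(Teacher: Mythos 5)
The paper does not prove this statement: it is quoted from Etnyre's lecture notes \cite{Et1} and used as a black box, so there is no in-paper argument to compare against. Judged on its own, your outline reproduces the standard three-step proof of the cited source: (a) the lemma that the contact framing of a Legendrian knot on a page equals the page framing, (b) the topological fact that $\mp 1$ surgery relative to the page framing on a curve in a fibre preserves the page and composes the monodromy with $\tau_K^{\mp}$, and (c) the compatibility check. Steps (a) and (b), including the sign conventions, are correct; but note that for (a) a purely local model near $K$ is not quite enough --- the standard argument uses the global consequence of compatibility that $\eta$ can be isotoped to be $C^0$-close to the tangent planes of the pages away from the binding, and it is this that forces the two framings to agree.

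The genuine gap is in step (c), which you rightly identify as the hard part but do not actually carry out. Producing an explicit contactomorphism on $N$ ``interpolating the two normal forms'' is delicate and is not how standard treatments close the argument. The efficient route is: both $\eta^{\pm1}$ and the contact structure supported by $(F,h\circ\tau_K^{\mp})$ (via Thurston--Winkelnkemper/Giroux) agree outside a standard neighbourhood $N$ of $K$, and on the reglued solid torus both restrict to tight contact structures with the same boundary data; one then invokes the uniqueness of such structures --- equivalently, the uniqueness of the result of contact $(\pm1)$-surgery, which this paper itself records in Section 4.2 --- to identify them up to isotopy. Without appealing to some such uniqueness statement, the interpolation you describe remains an unproved assertion, and it is precisely the point where a different contact structure in the same smooth class could otherwise slip in. So the plan is the right one, but as written the proof is incomplete at its crucial step.
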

	
	Akbulut and Ozbagci gave an algorithm constructing an open book on $\mathbb{S}^3_{st}$ so that a given Legendrian link lies on a page which is the Seifert surface of a suitable $(p,q)$-torus link \cite{AO}. Arikan improved this algorithm by using contact cell decompositions \cite{Ar}. Combining these results with Theorem \ref{thm:Etnyresurgery}, one can easily construct an open book on any closed contact $3$-manifold obtained by a contact $(\pm 1)$-surgery along a Legendrian link in $\mathbb{S}^3_{st}$. \\
	
	In higher dimensions, some similar results along the same lines are as follows: Koert constructed an open book on the standard contact $5$-sphere $\mathbb{S}^5_{st}=(\mathbb{S}^5,\xi_{st}^5)$ such that its pages are Briskorn varieties $V_{\epsilon}(p,q,2)$ and its binding is Seifert filling of a Briskorn manifold $\Sigma (p,q,2) $ \cite{Koert}. The right-handed (generalized) Dehn twist $\tau: T^{*}S^n\rightarrow T^{*}S^n$ along the zero section of the cotagent bundle of $n$-spheres and some of its applications were studied by Seidel \cite{Seidel}. A definition of contact $(\pm1)$-surgery on a Legendrian sphere based on a generalized Seidel-Dehn twist was given by Avdek \cite{Av}. Lazarev proved that if $(Y^{2n+1},\xi)$, $n \geqslant 2$ with an almost Weinstein filling,  then it can be obtained from $\mathbb{S}^{2n+1}_{st}$ by a sequence of isotropic surgeries (of any index at most $n$)  and a single coisotropic surgery (of index $n$) \cite{L }. Conway and Etnyre generalized Theorem \ref{thm:suregery in 3} to any odd dimension:
	
	\begin{theorem} [\cite{CE}] 
		Any oriented contact  manifold $(Y^{2n+1},\xi)$ can be obtain from $\mathbb{S}^{2n+1}_{st}$ by a sequence of  isotropic and coisotropic surgeries of all indicies.
		\label{thm:generalized}
	\end{theorem}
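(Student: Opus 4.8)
The plan is to translate the statement into the language of symplectic cobordisms and handle attachments, and then to build an explicit cobordism from $\mathbb{S}^{2n+1}_{st}$ to $(Y^{2n+1},\xi)$ whose handles realize isotropic and coisotropic surgeries. First I would record the basic dictionary: the trace of a contact surgery along a sphere $S\subset (M,\eta)$ is a symplectic cobordism whose gluing is determined by the normal and framing data of $S$. When $S$ is isotropic of dimension $k-1\le n$ (so the surgery has index $k\le n+1$, with $k=n+1$ the critical Legendrian case) the relevant handle is a (sub)critical Weinstein handle attached along the convex end, whereas a \emph{coisotropic} surgery, whose sphere has dimension $>n$ and hence index $\ge n+2$, corresponds to a handle attached from the concave side, i.e.\ to the upside-down trace of a Weinstein handle. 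Under this dictionary a finite sequence of surgeries carrying $(M_0,\eta_0)$ to $(M_1,\eta_1)$ is the same as a cobordism $X$ with $\partial_-X=(M_0,\eta_0)$ and $\partial_+X=(M_1,\eta_1)$ decomposed into such handles. Thus it suffices to produce one cobordism from the standard sphere to $(Y,\xi)$ assembled only from isotropic and coisotropic handles of all indices.

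To build this cobordism I would use a supporting open book, which exists by Giroux's theorem: $(Y,\xi)$ is supported by an abstract open book $(W,\phi)$ with Weinstein page $W^{2n}$ and symplectic monodromy $\phi$. The standard sphere $\mathbb{S}^{2n+1}_{st}$ is supported by the trivial open book $(\mathbb{D}^{2n},\mathrm{id})$, so the problem splits into two tasks: (i) deform the page from the ball $\mathbb{D}^{2n}$ to $W$, and (ii) install the monodromy $\phi$. For (i), since $W$ is Weinstein it is obtained from $\mathbb{D}^{2n}$ by attaching isotropic and Legendrian handles along its skeleton; each page modification is a stabilization-type move whose effect on the total space, analyzed contact-geometrically, is an isotropic surgery of the corresponding index on the ambient manifold. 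Carrying these out one at a time produces, from $\mathbb{S}^{2n+1}_{st}$, the contact manifold supported by $(W,\mathrm{id})$, using only isotropic surgeries of index $\le n+1$.

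For (ii) I would install $\phi$ one factor at a time. When $\phi$ is isotopic to a product of Dehn twists $\tau_{L_i}^{\pm 1}$ along Lagrangian spheres $L_i\subset W$ sitting on a page, each such twist is realized by a contact $(\mp 1)$-surgery along $L_i$ viewed as a Legendrian sphere in the ambient manifold; this is the higher-dimensional avatar of Theorem \ref{thm:Etnyresurgery}, and these are critical isotropic (index $n+1$) surgeries. The difficulty, and the reason coisotropic surgeries of all indices are required, is that for $2n\ge 4$ a symplectic mapping class need not factor as a product of Dehn twists, so $\phi$ generally carries an ``exotic'' part invisible to Legendrian surgeries. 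Here I would appeal to flexibility: the discrepancy between $\phi$ and a chosen product of twists is smoothly trivial, and by an $h$-principle argument (in the spirit of Lazarev's construction recalled above and of Cieliebak--Eliashberg flexibility) this smooth isotopy is implemented by handles attached along coisotropic spheres, whose higher-index surgeries absorb exactly the homotopy-theoretic and monodromy data that isotropic surgeries cannot reach.

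The main obstacle I anticipate is precisely this last step: controlling the contact structure on the boundary after the coisotropic surgeries. Each individual surgery determines the surgered contact manifold only up to the choices in its gluing, so one must verify that the cumulative effect reproduces $(Y,\xi)$ exactly, rather than merely a contact structure in the same underlying almost-contact homotopy class. I would address this by first matching almost-contact data along the cobordism via obstruction theory---using the freedom of coisotropic surgeries of all indices to kill the relevant normal-bundle and framing obstructions---and then upgrading from the almost-contact to the genuine contact level through the appropriate existence $h$-principle, checking at each stage that the evolving page and monodromy glue back to the open book $(W,\phi)$ supporting $\xi$.
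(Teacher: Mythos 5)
This theorem is quoted in the paper from \cite{CE} without proof, so there is no internal argument to compare against; your proposal has to be judged against the Conway--Etnyre argument itself. Their route is entirely cobordism-theoretic: isotropic surgeries are the boundary effect of attaching (flexible) Weinstein handles to the convex end of a symplectic cobordism, coisotropic surgeries are the effect of attaching such handles upside down along the concave end, and the theorem follows by exhibiting a cobordism from $\mathbb{S}^{2n+1}_{st}$ to $(Y^{2n+1},\xi)$ assembled from such pieces --- roughly, attach Weinstein handles on top of $(Y,\xi)$ to reach an (almost) Weinstein fillable contact manifold, connect that manifold to the sphere using Lazarev's theorem \cite{L } and the Cieliebak--Eliashberg flexibility h-principle, and then read the first cobordism backwards as a string of coisotropic surgeries. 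Open books and monodromy never enter. Your opening dictionary and your step (i) (building the manifold supported by $(W,\mathrm{id})$ from the sphere by isotropic surgeries, since $W\times D^2$ is subcritical Weinstein) are fine and consistent with this picture.

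The genuine gap is in your step (ii) and the paragraph that follows it. You correctly identify that for $2n\geq 4$ the monodromy $\phi$ need not factor into Dehn twists along Lagrangian spheres, but the proposed remedy --- that the ``discrepancy'' between $\phi$ and a product of twists is smoothly trivial and can be ``absorbed'' by coisotropic surgeries via an h-principle --- asserts the conclusion rather than proving it. Concretely: there is no reason the relative mapping class is smoothly trivial; no cited h-principle converts a (smooth or symplectic) isotopy of monodromies into a sequence of coisotropic surgeries, since a coisotropic surgery excises a neighborhood of a coisotropic sphere and its interaction with a supporting open book is exactly what would have to be established; and the final ``upgrade from almost-contact to genuine contact'' can only land in the correct homotopy class of almost contact structures, whereas the theorem demands $\xi$ on the nose --- pinning that down requires either overtwistedness (unavailable here) or the cobordism-level uniqueness statements that the Conway--Etnyre/Lazarev argument supplies and your open-book framework does not. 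You flag this circularity yourself in your last paragraph; the repair is not more care with the open book but replacing step (ii) entirely by the flexible-Weinstein-cobordism construction sketched above.
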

	
	In this paper, we focus on a certain class of closed contact $5$-manifolds obtained by a contact $(\pm1)$-surgeries along a special class of Legendrian $2$-\textit{links} (i.e., links of $2$-spheres) inside $\mathbb{S}^5_{st}$ as we describe below. We introduce a new diagram for such a $2$-link which can be considered as a generalization of square bridge diagram of a Legendrian $1$-link in $\mathbb{R}^3$ defined by Lyon \cite{Ly}. After presenting the \textit{generalized square bridge diagram} in Section \ref{sec:generalized-square-bridge}, we will prove our main result on the existence of open books on a certain class of closed contact $5$-manifolds in Section \ref{sec:algorithm}. We will give examples in Section \ref{sec:examples} to demonstrate how our algorithm works. In order to state the main theorem, one needs to introduce some relative notions first. We refer the reader to Section \ref{sec:sub_free_contact} for more details.\\
	
	Consider the standard contact structures $\xi_0^5=\textrm{Ker}(\alpha_0^5=dz+x_1dy_1+x_2dy_2)$ on $\mathbb{R}^5$ with coordinates $(x_1,y_1,z,x_2,y_2)$, and $\xi_0^3=\textrm{Ker}(\alpha_0^3=dz+x_1dy_1)$ on $\mathbb{R}^3$ with coordinates $(x_1,y_1,z)$ as above. Note that $(\mathbb{R}^3,\xi_0^3)$ (resp., $\mathbb{S}^3_{st}$) is a contact submanifold of $(\mathbb{R}^5,\xi_0^5)$ (resp., $\mathbb{S}^5_{st}$). Also recall that $$(\mathbb{R}^3,\xi_0^3)\cong\mathbb{S}^3_{st}\setminus \{pt\}, \quad (\mathbb{R}^5,\xi_0^5)\cong\mathbb{S}^5_{st}\setminus \{pt\}.$$ Therefore, any isotropic submanifold of $\mathbb{S}^3_{st}$ (resp., $\mathbb{S}^5_{st}$) is also an isotropic submanifold of $(\mathbb{R}^3,\xi_0^3)$ (resp., $(\mathbb{R}^5,\xi_0^5)$), and vice versa. This will enable us to work inside $(\mathbb{R}^3,\xi_0^3)$ and $(\mathbb{R}^5,\xi_0^5)$. 
	
	\begin{definition}
		Let $\mathbb{L}=\bigsqcup_{i=1}^{r} L_{i} \subset (\mathbb{R}^5,\xi_0^5)\subset \mathbb{S}^5_{st}$ be a Legendrian $2$-link. $\mathbb{L}$ is called a \textit{relative link} if its equatorial link $\mathbb{K}=\mathbb{L}\cap \mathbb{S}^3_{st}=\bigsqcup_{i=1}^{r} {K}_{i} $ is a Legendrian $1$-link in $(\mathbb{R}^3,\xi_0^2)\subset \mathbb{S}^3_{st}$. A relative Legendrian $2$-link $\mathbb{L}$ is called \textit{admissible} if the following two conditions are satistified:
		\begin{itemize}
			\item[(i)] Each link component $L_{i}$ is Legendrian unknotted $2$-sphere in $\mathbb{S}^5_{st}$.
			\item[(ii)] The Legendrian front of each link component ${K}_{i}={L}_i\cap \mathbb{S}^3$ onto the $y_1z$-plane has no self-crossing. 
		\end{itemize} 
		A(n) \textit{(admissible) relative $2$-knot} is a(n) (admissible) relative $2$-link with a single component.
	\end{definition}
	
	\begin{definition} [See also Section \ref{sec:admissible-surgery}] Let $\mathbb{L}\subset\mathbb{S}^5_{st}$ be a(n) (admissible) relative Legendrian $2$-link with its equatorial link $\mathbb{K}\subset\mathbb{S}^3_{st}$. Suppose the (closed) contact $5$-manifold $Y^5_{\xi}$ is obtained by only contact $(\pm 1)$-surgeries along $\mathbb{L}$, and the corresponding contact $(\pm 1)$-surgeries along $\mathbb{K}$ in $\mathbb{S}^3_{st}$ produce the (closed) contact submanifold $M^3_{\eta}\subset Y^5_{\xi}$. Such a surgery will be called a(n) \textit{(admissible) relative contact} $(\pm 1)$-\textit{surgery} along a pair $(\mathbb{L},\mathbb{K})$. The pair $(Y^5_{\xi}, M^3_{\eta})$ will be called a(n) \textit{(admissible) relative contact pair}. 
	\end{definition}
	
	We note that by forgetting all the contact geometric terms in the above definitions, one can similarly introduce a \textit{relative (smooth) topological pair} $(Y^5, M^3)$ obtained by a relative topological surgery along a pair $(\mathbb{L},\mathbb{K})\subset(\mathbb{S}^5, \mathbb{S}^3)$ where $\mathbb{K}=\mathbb{L}\cap \mathbb{S}^3$ is the equatorial link.\\
	
	A \textit{relative open book (decomposition)} on a relative (smooth) topological pair $(Y^5, M^3)$ is an open book $\mathcal{OB}_Y=(B, \pi)$ of $Y$ such that $\mathcal{OB}_M=(B\cap M, \pi|_{M\setminus (B\cap M)})$ defines an open book of $M$. For a relative contact pair $(Y^5_{\xi}, M^3_{\eta})$, we say that $(\xi, \eta)$ is \textit{compatible} with the relative open book $(\mathcal{OB}_Y,\mathcal{OB}_M)$ on $(Y^5, M^3)$ if the contact structures are compatible with the corresponding open books. Now we are ready to state our main theorems generalizing the results in \cite{Ar}:
	
	\begin{theorem} \label{thm:main thm}
		For any admissable Legendrian $2$-link $\mathbb{L}$ in $\mathbb{S}^5_{st}$ with its equatorial link $\mathbb{K}\subset\mathbb{S}^3_{st}$, there exists a compatible relative open book $(\mathcal{OB}_{\mathbb{S}^5},\mathcal{OB}_{\mathbb{S}^3})$ on the (trivial) relative contact pair $(\mathbb{S}^5_{st}, \mathbb{S}^3_{st})$ such that for any component $L$ of $\,\mathbb{L}$ and its equatorial knot $K\subset \mathbb{K}$, we have
		
		\begin{enumerate}
			\item[(i)] $K$ lies on a Weinstein page $F_{\mathbb{S}^3}$ of the (compatible) open book $\mathcal{OB}_{\mathbb{S}^3}$ of $\mathbb{S}^3_{st}$,
			\item[(ii)] the page framing of $K$ is equal to its contact framing in $\mathbb{S}^3_{st}$,
			\item[(iii)] $L$ lies on a Weinsten page $F_{\mathbb{S}^5}$ of the (compatible) open book $\mathcal{OB}_{\mathbb{S}^5}$ of $\mathbb{S}^5_{st}$,
			\item[(iv)] the page framing of $L$ is equal to its contact framing in $\mathbb{S}^5_{st}$, and
			\item[(v)] $F_{\mathbb{S}^3}$ is symplectically and properly embedded in $F_{\mathbb{S}^5}$, and the monodromies of $\mathcal{OB}_{\mathbb{S}^5},\mathcal{OB}_{\mathbb{S}^3}$ are, respectively, the products of right-handed Dehn twists $$h_{\mathbb{S}^5}=\tau_{\mathcal{D}_1}\circ \tau_{\mathcal{D}_2}\circ\cdots \circ \tau_{\mathcal{D}_m}, \quad h_{\mathbb{S}^3}=\tau_{\gamma_1}\circ \tau_{\gamma_2}\circ \cdots \circ \tau_{\gamma_m}$$ (for some $m$) where each $\gamma_k$ is an isotropic equatorial circle of  a Legendrian sphere $\mathcal{D}_k$.
		\end{enumerate}
	\end{theorem}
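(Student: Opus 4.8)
The plan is to produce the two open books from a single combinatorial datum, the generalized square bridge diagram of $\mathbb{L}$, in such a way that the three-dimensional open book is literally a symplectic slice of the five-dimensional one. First I would dispose of the equatorial data. Placing $\mathbb{K}$ in square bridge position and running the Akbulut--Ozbagci/Arikan algorithm gives an open book $\mathcal{OB}_{\mathbb{S}^3}$ of $\mathbb{S}^3_{st}$ whose page $F_{\mathbb{S}^3}$ is the fiber surface of a suitable $(p,q)$-torus link and which is compatible with $\xi_0^3$. By construction $\mathbb{K}$ sits on $F_{\mathbb{S}^3}$ with page framing equal to its contact framing, and the monodromy is a product of right-handed Dehn twists $h_{\mathbb{S}^3}=\tau_{\gamma_1}\circ\cdots\circ\tau_{\gamma_m}$ along the core circles $\gamma_k$ of the bands out of which the surface is plumbed. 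This settles (i) and (ii).

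Next I would lift the construction one dimension up. The admissibility hypotheses are exactly what guarantees that each component $L$ is the front spin of its equator $K$, and the generalized square bridge diagram records $\mathbb{L}$ as such a spin of $\mathbb{K}$. I would accordingly build the page $F_{\mathbb{S}^5}$ as the $4$-dimensional Weinstein domain obtained by replacing every plumbing band of $F_{\mathbb{S}^3}$ with a copy of (a compact piece of) $T^*S^2$ and every plumbing region with the corresponding Lagrangian plumbing, so that the vanishing cycles become Lagrangian $2$-spheres $\mathcal{D}_1,\dots,\mathcal{D}_m$. Since plumbings of cotangent bundles of spheres are Weinstein, $F_{\mathbb{S}^5}$ is Weinstein; attaching the associated contact handles realizes $\mathbb{S}^5_{st}$ and shows that the resulting open book $\mathcal{OB}_{\mathbb{S}^5}$ supports $\xi_0^5$ with $\mathbb{L}$ on a page. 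The framing count for $L$ is the spun version of the one already done for $K$, which gives (iii) and (iv).

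The core of the argument, and the step I expect to be the \emph{main obstacle}, is (v): I must arrange $F_{\mathbb{S}^3}\hookrightarrow F_{\mathbb{S}^5}$ to be symplectic and proper and, above all, to intertwine the two monodromies factor by factor, namely $\tau_{\mathcal{D}_k}|_{F_{\mathbb{S}^3}}=\tau_{\gamma_k}$ with $\gamma_k=\mathcal{D}_k\cap F_{\mathbb{S}^3}$ the equator of $\mathcal{D}_k$. The local model is a linear complex slice of an $A_1$ degeneration: writing the Milnor fibers as $\{x^2+y^2+z^2=1\}=T^*S^2$ and $\{x^2+y^2=1\}=T^*S^1$, the hyperplane $\{z=0\}$ embeds $T^*S^1$ as a complex, hence symplectic, submanifold meeting the Lagrangian sphere $S^2$ exactly in its real equator $S^1$. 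The Picard--Lefschetz monodromy of $x^2+y^2+z^2=\varepsilon$ is $\tau_{S^2}$, and restricting the degeneration to $\{z=0\}$ yields the monodromy $\tau_{S^1}$ of $x^2+y^2=\varepsilon$; thus $\tau_{\mathcal{D}_k}$ restricts to $\tau_{\gamma_k}$ on the slice.

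What remains is global: I would have to show that these local slices glue coherently across all plumbing regions so that $F_{\mathbb{S}^3}$ appears as a single properly embedded symplectic page of $F_{\mathbb{S}^5}$ respecting the Liouville data, and that the slice preserves the prescribed ordering of the Dehn twists. This coherence is precisely the statement that the three- and five-dimensional square bridge algorithms commute, and verifying it is where the generalized square bridge combinatorics must do the real work. Granting it, composing the local identities gives $h_{\mathbb{S}^5}|_{F_{\mathbb{S}^3}}=h_{\mathbb{S}^3}$, which completes (v) and with it the compatibility of $(\xi_0^5,\xi_0^3)$ with $(\mathcal{OB}_{\mathbb{S}^5},\mathcal{OB}_{\mathbb{S}^3})$.
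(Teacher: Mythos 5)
Your overall architecture matches the paper's: both build the five\-/dimensional page as a plumbing of copies of $D^*\mathbb{S}^2$ indexed by the squares of the bridge diagram, with the two-dimensional page sitting inside as a symplectic slice, and both obtain the monodromies as ordered products of right-handed Dehn twists along the zero sections $\mathcal{D}_k$ and their equators $\gamma_k$. But the step you explicitly defer --- ``granting'' that the local $A_1$-slices glue coherently and that attaching the handles ``realizes $\mathbb{S}^5_{st}$'' --- is precisely the content of the theorem, and as written your argument does not supply it. Two things are missing. First, it is not enough to build a Weinstein domain $F_{\mathbb{S}^5}$ and an abstract open book with the right monodromy: you must show that the resulting contact $5$-manifold is $\mathbb{S}^5_{st}$ with the \emph{standard} contact structure (not merely some contact structure on some closed $5$-manifold), and simultaneously that the induced $3$-dimensional open book supports $\xi_{st}^3$ on the equatorial sphere. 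Second, the factor-by-factor intertwining $\tau_{\mathcal{D}_k}|_{F_{\mathbb{S}^3}}=\tau_{\gamma_k}$ must hold for the global embedding and in the prescribed order, not just in the local Picard--Lefschetz model; you name this as the main obstacle and then assume it.

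The paper closes both gaps with a single device your proposal lacks: \emph{relative positive stabilization} (Definition \ref{def:Stab_of_Rel_Open_Book} and Theorem \ref{thm:Relative_Stab_Compatible_Open Book}). Starting from the trivial relative open book $(D^4,D^2,id_{D^4},id_{D^2})$ on $(\mathbb{S}^5_{st},\mathbb{S}^3_{st})$, the squares $R_k$ are introduced one at a time; each introduction is a single positive stabilization of the $5$-dimensional open book along a Lagrangian $2$-disk $A_k$ chosen to meet the $3$-dimensional page in a Lagrangian arc, so the $3$-dimensional open book is stabilized automatically and simultaneously, the new twists $\tau_{\mathcal{D}_k}$, $\tau_{\gamma_k}$ are adjoined in the correct order, and Giroux's stabilization theorem guarantees at every step that the result is again a compatible relative open book on $(\mathbb{S}^5_{st},\mathbb{S}^3_{st})$. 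This is what makes your ``coherence'' automatic rather than something to be checked by hand across all plumbing regions. (The paper also realizes the pages concretely as ribbons of explicit Legendrian diamonds $\mathcal{D}_k$ in $(\mathbb{R}^5,\xi_0^5)$, which is what lets it verify (ii) and (iv) by the tubular-neighborhood/ribbon argument rather than by an unspecified ``spun framing count,'' and it reconstructs $L$ inside the core of the page via Proposition \ref{Prop:reconstruction} rather than by appealing to spinning.) If you want to keep your Picard--Lefschetz local model, you would still need to organize the global construction as an induction of this kind; without it the proof is incomplete at its central step.
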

	
	\begin{theorem} \label{thm:contact surgery}
		Let $(Y^5_{\xi}, M^3_{\eta})$ be any admissible relative contact pair obtained by an admissible relative contact $(\pm 1)$-surgery along an admissible link $\mathbb{L}$ in $\mathbb{S}^5_{st}$. Then there exists a compatible relative open book $(\mathcal{OB}_Y,\mathcal{OB}_M)$ on $(Y^5_{\xi}, M^3_{\eta})$ with Weinstein pages. Moreover, the pages and the monodromies of the open books $\mathcal{OB}_Y$, $\mathcal{OB}_M$ are precisely and systematically determined by the contact surgery along $\mathbb{L}$.
	\end{theorem}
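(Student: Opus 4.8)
The plan is to deduce Theorem \ref{thm:contact surgery} from Theorem \ref{thm:main thm} together with the higher-dimensional analogue of Theorem \ref{thm:Etnyresurgery}, that is, by understanding how a contact $(\pm 1)$-surgery along a Legendrian sphere sitting on a page alters the monodromy of a compatible relative open book. First I would apply Theorem \ref{thm:main thm} to the admissible link $\mathbb{L}\subset\mathbb{S}^5_{st}$ (with equatorial link $\mathbb{K}\subset\mathbb{S}^3_{st}$) to obtain a compatible relative open book $(\mathcal{OB}_{\mathbb{S}^5},\mathcal{OB}_{\mathbb{S}^3})$ on the trivial pair $(\mathbb{S}^5_{st},\mathbb{S}^3_{st})$ in which every component $L$ of $\mathbb{L}$ lies on a Weinstein page $F_{\mathbb{S}^5}$ and every equatorial knot $K\subset\mathbb{K}$ lies on a Weinstein page $F_{\mathbb{S}^3}$, with page framings equal to the respective contact framings, and with $F_{\mathbb{S}^3}$ symplectically and properly embedded in $F_{\mathbb{S}^5}$. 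This is precisely the configuration in which the contact surgery can be read off as a page operation, so the surgered pair $(Y^5_\xi,M^3_\eta)$ is obtained from the trivial pair without changing the pages.

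The core step is to show that performing the relative contact $(\pm 1)$-surgery along $(\mathbb{L},\mathbb{K})$ preserves the pages and only modifies the monodromies by (generalized) Dehn twists. Since $n=2$ here, each component $L$ is a Legendrian $2$-sphere and the contact surgery along it is a critical surgery realized by attaching a Weinstein $3$-handle; because $L$ is a Lagrangian sphere in the symplectic page $F_{\mathbb{S}^5}$ whose page framing equals its contact framing, I would invoke the five-dimensional surgery–monodromy correspondence via the generalized Seidel–Dehn twist (following Seidel \cite{Seidel} and Avdek \cite{Av}) to conclude that $Y^5_\xi$ carries a compatible open book with the same page $F_{\mathbb{S}^5}$ and monodromy $h_{\mathbb{S}^5}\circ\tau_L^{\mp 1}$. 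Restricting to the contact submanifold, the equatorial knot $K$ is a curve on the surface page $F_{\mathbb{S}^3}$ with page framing equal to its contact framing, so Theorem \ref{thm:Etnyresurgery} applies verbatim and yields a compatible open book on $M^3_\eta$ with page $F_{\mathbb{S}^3}$ and monodromy $h_{\mathbb{S}^3}\circ\tau_K^{\mp 1}$, with the exponent dictated by the surgery coefficient. Iterating over all components of $\mathbb{L}$ and their equatorial knots produces the two monodromies as prescribed products of right-handed/left-handed Dehn twists determined entirely by the surgery data.

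Next I would verify that these two modifications fit together into a \emph{relative} open book compatible with the pair $(\xi,\eta)$. The essential geometric point is that $K$ is the equatorial knot of $L$, so the Lagrangian sphere $L\subset F_{\mathbb{S}^5}$ meets the properly embedded subpage $F_{\mathbb{S}^3}$ exactly along the curve $K$; consequently the Seidel–Dehn twist $\tau_L$ restricts on $F_{\mathbb{S}^3}$ to the ordinary Dehn twist $\tau_K$. This compatibility of the twists with the proper symplectic embedding $F_{\mathbb{S}^3}\hookrightarrow F_{\mathbb{S}^5}$ guarantees that $\mathcal{OB}_M=(B\cap M,\,\pi|_{M\setminus(B\cap M)})$ is again an honest open book of $M^3_\eta$ and that the pair $(\mathcal{OB}_Y,\mathcal{OB}_M)$ is a compatible relative open book with Weinstein pages. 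Since each page and each added Dehn twist is explicitly produced by the algorithm of Theorem \ref{thm:main thm} and the surgery coefficients, the pages and monodromies are indeed systematically determined by the surgery along $\mathbb{L}$.

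The main obstacle, and the step requiring the most care, is the five-dimensional surgery–monodromy correspondence and its compatibility with restriction. In dimension three Theorem \ref{thm:Etnyresurgery} supplies the monodromy change for free, but in dimension five one must verify that the critical contact surgery along the Lagrangian sphere $L$ is Weinstein-deformation-equivalent to the open book whose monodromy has been composed with $\tau_L^{\mp 1}$, and that this equivalence can be chosen to respect the subpage $F_{\mathbb{S}^3}$ so that it restricts to the three-dimensional statement along $K$. Simultaneously controlling the two framing identifications (contact versus page framing in both dimensions, and their compatibility across the embedding $\mathbb{S}^3_{st}\hookrightarrow\mathbb{S}^5_{st}$) while ensuring that the restricted objects remain symplectic and properly embedded is where the bulk of the technical work lies.
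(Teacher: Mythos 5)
Your overall strategy --- apply Theorem \ref{thm:main thm} and then convert each contact $(\pm1)$-surgery along a Lagrangian sphere on a page into a composition of the monodromy with a (generalized) Dehn twist, checking that the five-dimensional twist restricts on the subpage to the three-dimensional one --- is exactly the route the paper takes (it quotes the five-dimensional ``surgery on a page'' statement from Theorem 4.6 of \cite{Koert2} rather than re-deriving it from \cite{Seidel} and \cite{Av}, but that is immaterial). However, there is a concrete step you skip that the paper spends most of its proof on: the reconstructed components of $\mathbb{L}$ produced by Proposition \ref{Prop:reconstruction} all sit in the Lagrangian core $\bigsqcup_k\mathcal{D}_k$ of a \emph{single} page $F^0_{\mathbb{S}^5}$, and there they \emph{overlap} one another (distinct components share plates of the Legendrian diamonds), even though the reconstructed equatorial $1$-link remains embedded with mutually disjoint components on $F^0_{\mathbb{S}^3}$. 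One cannot apply a surgery-on-a-page theorem to a collection of Lagrangian spheres that are not disjointly embedded in the page, so your plan of reading off all the Dehn twists on that one page (or iterating component by component on the same page) does not go through as stated.

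The paper's fix is to work in a collar neighborhood $N_{\mathbb{S}^5}(F^0_{\mathbb{S}^5})$ contactly identified with $(F\times[-1,1],\mathrm{Ker}(\alpha_0^5+dt))$ and use the Reeb flow $\partial/\partial t$ to push each component $L_i$ onto a \emph{different} page $F\times\{\epsilon_i\}$, with the $\epsilon_i$ chosen via a metric so that the push-offs stay separated; since $\partial/\partial t$ is a contact vector field this is a Legendrian isotopy preserving the linking matrix, and it restricts to the analogous isotopy of the equatorial link inside the sub-collar $G\times[-1,1]$. Only after this separation are the components disjoint Lagrangian unknots lying on pages, and only then are the surgery-on-a-page theorems (Theorem \ref{thm:Etnyresurgery} in dimension three, Theorem 4.6 of \cite{Koert2} in dimension five) invoked to produce $(F_{\mathbb{S}^5},F_{\mathbb{S}^3},h_{\mathbb{S}^5}\circ\tau_{\mathbb{L}'},h_{\mathbb{S}^3}\circ\tau_{\mathbb{K}'})$. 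Your proposal would be complete once you add this separation argument; the rest of what you describe, including the restriction of $\tau_{L}$ to $\tau_{K}$ along the properly embedded subpage, matches the paper.
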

	
	%==================================================================
	%\medskip \noindent {\em Acknowledgments.\/} The authors would like to
	%thank .........  for invaluable commands and suggestions.
	%==================================================================
	
	%-----------------------------------------------------------------------------------------------
	%-----------------------------------------------------------------------------------------------
	%-----------------------------------------------------------------------------------------------
	
	\section{Stabilizations of Relative Open Books} \label{sec:sub_free_contact}  
	It is known that any odd dimensional manifold has an open book decomposition. Stabilization is a process which transforms a given open book of a manifold into another open book for the same manifold. It changes both pages and the monodromy. One can naturally generalize this process for the relative open books on relative contact pairs as what we discuss below. To this end, let us introduce relative open books in the most generality. As before we first define them for (smooth) topological pairs and then make a connection with contact geometry. Although any odd dimensions can be considered, we will restrict our attention to the pairs of dimension three and five only.
	
	\begin{definition} \label{def:Rel_Open_Book}
		Let $M^3$ and $Y^5$ be closed (smooth) topological manifolds such that $M$ is a submanifold of  $Y$. A \textit{relative open book (decomposition)} on the pair $(Y,M)$ is a quadruple $(B_Y, B_M, \pi_Y, \pi_M)$ such that 
		\begin{itemize}
			\item[(i)] $(B_M, \pi_M)$ and $(B_Y, \pi_Y)$ are open book decompositions of $M$ and $Y$, resp., and
			\item[(ii)] $\pi_M=\pi_Y \circ i_M$ where $i_M:M \hookrightarrow Y$ is the embedding map.
		\end{itemize}	
	\end{definition}
	
	Note that the conditions imply that  \;$\overline{\pi^{-1}_M(t)} \subset \overline{\pi^{-1}_Y(t)}$\; and  \;$\partial \overline{\pi^{-1}_M(t)} \subset \partial \overline{\pi^{-1}_Y(t)}$\; for all \;$t \in S^1$. This means that $2$-dimensional pages of $(B_M, \pi_M)$ are properly embedded in the corresponding $4$-dimensional pages of $(B_Y, \pi_Y)$. Moreover, the monodromy of $(B_Y, \pi_Y)$ restricts to the monodromy of $(B_M, \pi_M)$. These suggest that relative open books can be also defined in a purely abstract way as follows:
	
	\begin{definition} \label{def:Abstract_Rel_Open_Book}
		Let $M^3$ and $Y^5$ be closed (smooth) topological manifolds such that $M$ is a submanifold of  $Y$. An \textit{abstract relative open book (decomposition)} on the pair $(Y,M)$ is a quadruple $(F_Y,F_M,h_Y,h_M)$ such that
		\begin{itemize}
			\item[(i)] $(F_M, h_M)$ and $(F_Y, h_Y)$ are abstract open book decompositions of $M$ and $Y$, resp.,
			\item[(ii)]  the page $F_M$ is properly embedded in the page $F_Y$ (so, $\partial F_M\subset \partial F_Y$),
			\item[(iii)] $h_Y|_{F_M}=h_M$. 
		\end{itemize}
	\end{definition}
	
	In the presence of both geometric and topological structures, it is worth to try to relate them in some way. Giroux showed that there is one such relation, called \textit{compatibility} as we have mentioned earlier, between contact structures and open book decompositions, and proved that there is a one-to-one correspondence between contact structures (up to isotopy) and their compatible open books (up to positive stabilization) \cite{Gi}. One can easily modify compatibility notion for relative contact pairs and relative open book decompositions as follows:
	
	\begin{definition} \label{def:Compatibility_of_Rel_Open_Book}
		Let $(Y^5_{\xi}, M^3_{\eta})$ be a relative contact pair. A relative open book decomposition $(B_Y, B_M, \pi_Y, \pi_M)$ on $(Y,M)$ is \textit{compatible} with the pair $(\xi, \eta)$ if
		\begin{itemize}
			\item[(i)] the open book $(B_Y, \pi_Y)$ is compatible with the contact structure $\xi$ on $Y$, and
			\item[(ii)] the open book $(B_M, \pi_M)$ is compatible with the contact structure $\eta$ on $M$.
		\end{itemize}
	\end{definition}
	
	\begin{example} \label{ex:trivial_open_book}
		Consider the (trivial) relative contact pair $(\mathbb{S}^5_{st},\mathbb{S}^3_{st})$ of standard contact spheres as before. Let $\mathbb{S}^2$ be the standard Legendrian $2$-unknot in $\mathbb{S}^5_{st}$ and $\mathbb{S}^1$ be its isotropic equatorial circle which is the standard Legendrian $1$-unknot in $\mathbb{S}^3_{st}$ (see Figure \ref{Fig:fronts} for their front projections). Then it is a (customary) standard fact that $(D^*\mathbb{S}^2,\tau_{\mathbb{S}^2})$ and $(D^*\mathbb{S}^1,\tau_{\mathbb{S}^1})$ are compatible open books on $\mathbb{S}^5_{st}$ and $\mathbb{S}^3_{st}$, respectively, where $D^*\mathbb{S}^n$ denotes the unit disk cotangent bundle of the $n$-sphere. It is also well known that $D^*\mathbb{S}^1$ is properly embedded in $D^*\mathbb{S}^2$ and the right-handed Dehn twist $\tau_{\mathbb{S}^2}: D^*\mathbb{S}^2\rightarrow D^*\mathbb{S}^2$ restricts to the right-handed Dehn twist $\tau_{\mathbb{S}^1}: D^*\mathbb{S}^1\rightarrow D^*\mathbb{S}^1$. Therefore, $(D^*\mathbb{S}^2, D^*\mathbb{S}^1,\tau_{\mathbb{S}^2},\tau_{\mathbb{S}^1})$ is a compatible relative open book decomposition on the relative contact pair $(\mathbb{S}^5_{st},\mathbb{S}^3_{st})$.
	\end{example}
	
	According to the Giroux correspondence, the positive stabilization does not only fix the diffeomorphism type of the underlying smooth manifold, but it also respects to the contact structure. More precisely, positively stabilization of a compatible open book on a contact manifold is compatible with a contact structure (on the same smooth manifold) which is isotopic to the original one. This correspondence can be naturally generalized for compatible relative open books on contact relative pairs. To this end, one needs to define the notion of relative stabilization. For completeness, let us first recall stabilization which arises as a special case of a more general process, called plumbing:\\
	
	Let $(Y^{2n+1}_i, \xi_i)$ be two closed contact $(2n+1)$-manifolds such that each $\xi_i=\textrm{Ker}(\alpha_i)$ is compatible with an open book $(F_i,h_i)$ on $Y_i$. Suppose that $A_i$ is a properly embedded Lagrangian $n$-disk in a symplectic page $(F_i,d\alpha_i)$ with Legendrian boundary $\partial A_i \subset (\partial F_i, \textrm{Ker}(\alpha_i|_{\partial F_i}))$. By the Weinstein (i.e., Lagrangian) neighborhood theorem each $A_i$ has a standard neighborhood $N_i$ in $F_i$ which is symplectomorphic to $(T^* D^n, d\lambda_{\textrm{can}})$ where $\lambda_{\textrm{can}}=\textbf{p}\textbf{d}\textbf{q}$ is the canonical $1$-form on $\mathbb{R}^n \times \mathbb{R}^n$ with coordinates $(\textbf{q},\textbf{p})$. Then the \emph{plumbing} of the open books $(F_1,h_1)$ and $(F_2,h_2)$ \emph{along} $A_1$ and $A_2$ is the open book $(\mathcal{P}(F_1,F_2;A_1,A_2),h)$ on the connected sum $Y_1 \# Y_2$ with the pages obtained by gluing $F_i$'s together along $N_i$'s by interchanging $\textbf{q}$-coordinates in $N_1$ with $\textbf{p}$-coordinates in $N_2$, and vice versa. In order to describe $h$, extend each $h_i$ to $\tilde{h}_i$ on the new page by requiring $\tilde{h}_i$ to be identity map outside the domain of $h_i$. Then the monodrodmy $h$ is defined to be $\tilde{h}_1 \circ \tilde{h}_2$. For simplicity, ``tilde'' sign will be dropped, and we will simply write $h=h_1 \circ h_2$.
	
	\begin{definition}[\cite{Gi}] \label{def:Stabilization_Compatible_Open Book}
		Suppose that $(F,h)$ is an open book compatible with the contact structure $\xi=\textrm{Ker}(\alpha)$ on a closed $(2n+1)$-manifold $Y$. Let $A$ be a properly embedded Lagrangian $n$-disk in a symplectic page $(F,d\alpha)$ such that  its boundary $\partial A$ is a Legendrian $(n-1)$-sphere in the contact binding $(\partial F,\textrm{Ker}(\alpha |_{\,\partial F}))$.
		Then the \textit{positive stabilization }$\mathcal{S}^+[(F,h);A]$ of $(F,h)$
		\textit{along} $A$ is the open book defined by 
		$$\mathcal{S}^+[(F,h);A]\doteq(\mathcal{P}(F,D^*\mathbb{S}^n;A,\textbf{D}), h \circ \tau_{\mathbb{S}^n})$$ 
		where $\textbf{D}\cong D^n$ is any fiber in the unit disk cotangent bundle $D^*\mathbb{S}^n$ and $\tau_{\mathbb{S}^n}:D^*\mathbb{S}^n \to D^*\mathbb{S}^n$ is the right-handed Dehn twist along the zero section $\mathbb{S}^n$. Similarly, the \textit{negative stabilization }$\mathcal{S}^-[(F,h);A]$ of $(F,h)$ \textit{along} $A$ is the open book defined by 
		$$\mathcal{S}^-[(F,h);A]\doteq(\mathcal{P}(F,D^*\mathbb{S}^n;A,\textbf{D}), h \circ \tau^{-1}_{\mathbb{S}^n})$$
	\end{definition}
	
	\begin{remark} \label{rem:alternative_def_of_stabilization}
		When a compatible open book $(F,h)$ is stabilized, abstractly it means that each symplectic page $(F^{2n},d\alpha)$ gains a Weinstein handle $H=D^n \times D^n$ of index $n$. Attaching spheres of these handles are the copies of Legendrian $(n-1)$-sphere $\partial A$ considered inside the boundary $\partial F$ of each fiber. Also the notation used in the above definition is quite misleading and  is not appropriate for our purposes. When a positive/negative stabilization is performed, abstractly it means that the right-handed/left-handed Dehn twist that we adjoin to the monodromy is along the Lagrangian $n$-sphere $L$ formed by gluing the Lagrangian $n$-disk $(A,\partial A) \subset (F,\partial F)$ and the Lagrangian $n$-disk $D^2 \times \{0\}$ (the core disk of $H$) together along their common boundary $\partial A \subset \partial F$, that is, $L=A \cup_{\partial A} (D^2 \times \{0\})$. In conclusion, we will use the following (abstract) alternative (but equivalent) definition (notation) for positive/negative stabilization of $(F,h)$ \textit{along} $A$:
		$$\mathcal{S}^\pm[(F,h);A]\doteq(F\cup H, h \circ \tau^{\pm 1}_{L}).$$   
	\end{remark}
	
	\begin{theorem} [\cite{Gi}] \label{thm:Stabilization_Compatible_Open Book}
		In the setting of Definition \ref{def:Stabilization_Compatible_Open Book}, both $\mathcal{S}^\pm[(F,h);A]$ are open books on the connected sum $Y\# \mathbb{S}^{2n+1} (\cong Y)$. Indeed, $\mathcal{S}^+[(F,h);A]$ is an open book on the contact connected sum $Y_\xi \# \mathbb{S}^{2n+1}_{st} (\cong Y_\xi)$, in other words, it is a compatible open book on $(Y,\xi)$.
	\end{theorem}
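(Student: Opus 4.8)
The plan is to split the assertion into a topological part (both $\mathcal{S}^{\pm}$ are open books of $Y$) and a contact-geometric part (the positive one is compatible with $\xi$). For the topological part I would use the fact, already built into the definition of plumbing, that $\mathcal{P}(F, D^*\mathbb{S}^n; A, \mathbf{D})$ with monodromy $h \circ \tau_{\mathbb{S}^n}^{\pm 1}$ is an open book of the connected sum $Y \# Z$, where $Z$ is the total space of $(D^*\mathbb{S}^n, \tau_{\mathbb{S}^n}^{\pm 1})$. It then suffices to identify $Z \cong \mathbb{S}^{2n+1}$: for the $+$ sign this is the Milnor/Brieskorn open book of the $A_1$-singularity $z_0^2 + \cdots + z_n^2$, whose page is the $A_1$ Milnor fiber $\simeq D^*\mathbb{S}^n$ and whose monodromy is $\tau_{\mathbb{S}^n}$; for the $-$ sign, $(D^*\mathbb{S}^n, \tau_{\mathbb{S}^n}^{-1})$ has total space orientation-reversingly diffeomorphic to that of $(D^*\mathbb{S}^n, \tau_{\mathbb{S}^n})$, hence again $\cong \mathbb{S}^{2n+1}$ since the sphere admits orientation-reversing self-diffeomorphisms. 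As $Y \# \mathbb{S}^{2n+1} \cong Y$, this proves the first claim.

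For the contact part I would first record that $(D^*\mathbb{S}^n, \tau_{\mathbb{S}^n})$ is compatible with the standard contact structure $\xi_{st}^{2n+1}$ on $\mathbb{S}^{2n+1}$: the page carries its canonical Weinstein structure, $\tau_{\mathbb{S}^n}$ is an exact symplectomorphism equal to the identity near $\partial(D^*\mathbb{S}^n)$, and the Thurston-Winkelnkemper/Giroux reconstruction of a contact form from this abstract data recovers precisely $\xi_{st}^{2n+1}$. The analogous statement fails for $\tau_{\mathbb{S}^n}^{-1}$, which is exactly why only the positive stabilization is asserted to be compatible.

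Next I would show that the plumbing is compatible with the contact connected sum. Using Remark \ref{rem:alternative_def_of_stabilization}, the plumbed page is $F \cup H$, obtained by attaching a Weinstein $n$-handle along (a neighborhood of) the Legendrian boundary $\partial A$; the Weinstein neighborhood theorem identifies a neighborhood of $A$ in $(F, d\alpha)$ with $(T^*D^n, d\lambda_{\mathrm{can}})$ and matches it with the fiber neighborhood $\mathbf{D}$ under the coordinate interchange of the plumbing, so that $F \cup H$ inherits a Weinstein structure and contains the Lagrangian sphere $L = A \cup_{\partial A}(D^n \times \{0\})$. Since the added monodromy $\tau_L$ is an exact symplectomorphism supported near $L$ and equal to the identity near $\partial(F \cup H)$, running the Thurston-Winkelnkemper construction on $(F \cup H, h \circ \tau_L)$ yields a contact form compatible with $\mathcal{S}^+[(F,h);A]$; by locality of the construction away from the handle region, the resulting structure is the contact connected sum $\xi \,\#\, \xi_{st}^{2n+1}$. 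Finally, since $Y_\xi \# \mathbb{S}^{2n+1}_{st} \cong Y_\xi$, the positive stabilization is a compatible open book on $(Y, \xi)$.

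I expect the main obstacle to be this last step: verifying that the Thurston-Winkelnkemper contact form built on the plumbed page with the twisted monodromy is genuinely the contact connected sum $\xi \,\#\, \xi_{st}^{2n+1}$, rather than merely some compatible structure on the smooth manifold $Y$. This demands interpolating the Liouville forms of the two pages across the plumbing region while preserving positivity of $\alpha \wedge (d\alpha)^n$ near the binding, and it is precisely here that the right-handedness of the Dehn twist is used: the orientation conventions forcing the co-orientation of $\xi$ to agree with the page orientation and the outward Liouville flow single out $\tau_L$ (not $\tau_L^{-1}$) as the monodromy producing compatibility with the original $\xi$. Careful tracking of these orientations together with the standard handle model completes the argument.
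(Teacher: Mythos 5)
The paper does not actually prove this theorem: it is imported from Giroux's ICM announcement \cite{Gi} and used as a black box (the only proof in that section, of the relative version, simply reduces to it), so there is no in-paper argument to compare yours against. Judged on its own, your outline follows the standard route and is essentially correct. The topological half is, as you note, largely packaged into the paper's definition of plumbing once one knows that $(D^*\mathbb{S}^n,\tau^{\pm 1}_{\mathbb{S}^n})$ is an abstract open book for $\mathbb{S}^{2n+1}$ (the $A_1$ Milnor fibration for the positive twist, and its orientation reversal for the negative one); the contact half is the higher-dimensional Murasugi-sum compatibility with contact connected sum that Giroux announced and that is carried out in detail in van Koert's notes \cite{Koert2}, which this paper cites elsewhere. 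The one place your sketch is genuinely thinner than a proof is the step you yourself flag: showing that the Thurston--Winkelnkemper form built on $(F\cup H,\,h\circ\tau_L)$ is the contact connected sum $\xi\,\#\,\xi^{2n+1}_{st}$ rather than merely some supported contact structure. A direct interpolation of Liouville forms across the plumbing region is delicate; the cleaner standard closure is to observe that attaching the Weinstein handle $H$ along the Legendrian $\partial A$ with framing one less than the contact framing exhibits the new page as a Weinstein domain whose completion agrees with the old one outside a Darboux ball, and then to invoke Giroux's uniqueness (up to isotopy) of the contact structure supported by a given open book to identify the result with $\xi$. With that supplied, your argument is complete and consistent with the sources the paper relies on.
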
 
	
	Returning back to the dimension three and five, we now define relative stabilization process:
	
	\begin{definition} \label{def:Stab_of_Rel_Open_Book}
		Suppose that $(F_Y,F_M,h_Y,h_M)$ is a relative open book compatible with the pair of contact structures $(\xi=\textrm{Ker}(\alpha), \eta=\textrm{Ker}(\alpha|_{M}))$ on a relative contact pair $(Y^5_{\xi}, M^3_{\eta})$. Let $A$ be a properly embedded Lagrangian $2$-disk in a symplectic page $(F_Y,d\alpha|_{F_Y})$ with Legendrian boundary $\partial A$ (an unknot) in the contact binding $(\partial F_Y,\textrm{Ker}(\alpha |_{\,\partial F_Y}))$ such that $A\cap F_M$ is a properly embedded Lagrangian $1$-disk in a symplectic page $(F_M,d\alpha|_{F_M})$ with Legendrian boundary $\partial(A\cap F_M)$ (a pair of points) in the contact binding $(\partial F_M,\textrm{Ker}(\alpha |_{\,\partial F_M}))$. Also consider the relative Weinstein handle $H=(H^4,H^2)=(D^2\times D^2, D^1\times D^1)$ such that the core and the cocore of $H^2$ is properly embedded in those of $H^4$, that is, $D^1\times\{0\}\subset D^2\times\{0\}$, $\{0\}\times D^1\subset \{0\}\times D^2$ and $\partial D^1\times\{0\}\subset \partial D^2\times\{0\}$ and $\{0\}\times \partial D^1\subset \{0\}\times \partial D^2$. Then the \textit{positive relative stabilization} $\mathcal{S}^+[(F_Y,F_M,h_Y,h_M);A, A\cap F_M]$ of $(F_Y,F_M,h_Y,h_M)$ \textit{along} $(A,A\cap F_M)$ is the open book defined by 
		$$\mathcal{S}^+[(F_Y,F_M,h_Y,h_M);A, A\cap F_M]\doteq(F_Y\cup H^4,F_M\cup H^2, h_Y \circ \tau_L,h_M \circ \tau_K)$$ 
		where the framing of $H^4$ is one less than the contact framing of $\partial A$ in $(\partial F_Y,\textrm{Ker}(\alpha |_{\,\partial F_Y}))$, and $K=A \cap F_M \cup_{\partial (A \cap F_M)} (D^1 \times \{0\})$ is the Lagrangian $1$-sphere in the new symplectic page $F_M \cup H^2$ and $L=A \cup_{\partial A} (D^2 \times \{0\})$ is the Lagrangian $2$-sphere  in the new symplectic page $F_Y \cup H^4$. Similarly, the \textit{negative relative stabilization} $\mathcal{S}^-[(F_Y,F_M,h_Y,h_M);A, A\cap F_M]$ of $(F_Y,F_M,h_Y,h_M)$ \textit{along} $(A,A\cap F_M)$ is the open book defined by 
		$$\mathcal{S}^-[(F_Y,F_M,h_Y,h_M);A, A\cap F_M]\doteq(F_Y\cup H^4,F_M\cup H^2, h_Y \circ \tau^{-1}_L,h_M \circ \tau^{-1}_K)$$
		where this time the framing of $H^4$ is one more than the contact framing of $\partial A$ in $(\partial F_Y,\textrm{Ker}(\alpha |_{\,\partial F_Y}))$.
	\end{definition}
	
	\begin{remark}
		From the definition it is understood that a relative stabilization consists of two stabilizations one of which performed in dimension three and the other in five, but there is only one stabilization indeed: we are only stabilizing the open book $(F_Y,h_Y)$ along $A$, i.e., 
		$$\mathcal{S}^\pm[(F_Y,F_M,h_Y,h_M);A, A\cap F_M]=\mathcal{S}^\pm[(F_Y,h_Y);A],$$ and as a result of this $(F_M,h_M)$ is also simultaneously stabilized along $A\cap F_M$. 
	\end{remark}
	
	The result of Giroux holds also in the relative setting. Namely,
	
	\begin{theorem}  \label{thm:Relative_Stab_Compatible_Open Book}
		In the setting of Definition \ref{def:Stab_of_Rel_Open_Book}, both $\mathcal{S}^\pm[(F_Y,F_M,h_Y,h_M);A, A\cap F_M]$ are open books on the connected sum $(Y\# \mathbb{S}^{5},M\# \mathbb{S}^{3})\cong (Y,M)$. Indeed, $\mathcal{S}^+[(F_Y,F_M,h_Y,h_M);A, A\cap F_M]$ is an open book on the contact connected sum $(Y_\xi\# \mathbb{S}^{5}_{st},M_\eta \# \mathbb{S}^{3}_{st})\cong (Y_\xi,M_\eta)$, in other words, it is a compatible relative open book on the relative contact pair $(Y_\xi,M_\eta)$.
	\end{theorem}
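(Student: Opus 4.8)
The plan is to reduce the relative statement to two applications of Giroux's stabilization theorem, Theorem~\ref{thm:Stabilization_Compatible_Open Book}, one carried out in dimension five and one in dimension three, and then to verify that the two resulting stabilizations assemble into a \emph{relative} open book in the sense of Definitions~\ref{def:Abstract_Rel_Open_Book} and \ref{def:Compatibility_of_Rel_Open_Book}. The organizing principle is the observation recorded in the Remark following Definition~\ref{def:Stab_of_Rel_Open_Book}: a relative stabilization is a single stabilization, namely $\mathcal{S}^\pm[(F_Y,h_Y);A]$ performed on the five-dimensional open book, whose restriction to the three-dimensional data coincides with $\mathcal{S}^\pm[(F_M,h_M);A\cap F_M]$. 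Thus everything is controlled by one five-dimensional stabilization together with an analysis of how it restricts to $M$.

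First I would apply Theorem~\ref{thm:Stabilization_Compatible_Open Book} directly to the five-dimensional open book $(F_Y,h_Y)$ stabilized along the properly embedded Lagrangian $2$-disk $A$. Since $\partial A$ is a Legendrian unknot in the contact binding $(\partial F_Y,\textrm{Ker}(\alpha|_{\partial F_Y}))$ and the handle framing is exactly the one prescribed in Definition~\ref{def:Stab_of_Rel_Open_Book} (one less than the contact framing in the positive case, one more in the negative case), Theorem~\ref{thm:Stabilization_Compatible_Open Book} yields that $\mathcal{S}^\pm[(F_Y,h_Y);A]=(F_Y\cup H^4,\,h_Y\circ\tau_L^{\pm1})$ is an open book on $Y\#\mathbb{S}^5\cong Y$, and that the positive one is an open book on the contact connected sum $Y_\xi\#\mathbb{S}^5_{st}\cong Y_\xi$, hence compatible with $\xi$. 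This settles condition (i) of Definitions~\ref{def:Abstract_Rel_Open_Book} and \ref{def:Compatibility_of_Rel_Open_Book} for the ambient manifold.

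Next I would identify the restriction to $M$. The relative Weinstein handle $H=(H^4,H^2)=(D^2\times D^2,\,D^1\times D^1)$ is set up so that $H^2$ is properly embedded in $H^4$ with cores and cocores compatibly nested; consequently $F_M\cup H^2$ is properly embedded in $F_Y\cup H^4$, which gives condition (ii). The attaching region of $H^4$ meets $M$ precisely in the attaching region of $H^2$, so the single five-dimensional handle attachment restricts to the three-dimensional one, and $A\cap F_M$ together with the core $D^1\times\{0\}$ assembles into the Lagrangian $1$-sphere $K=L\cap(F_M\cup H^2)$. The key geometric input is then the restriction property of the generalized Dehn twist: $\tau_L$ preserves $F_M\cup H^2$ and restricts there to $\tau_K$, exactly the standard nesting $\tau_{\mathbb{S}^2}|_{D^*\mathbb{S}^1}=\tau_{\mathbb{S}^1}$ already used in Example~\ref{ex:trivial_open_book}. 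Combining this with the hypothesis $h_Y|_{F_M}=h_M$ gives $(h_Y\circ\tau_L^{\pm1})|_{F_M\cup H^2}=h_M\circ\tau_K^{\pm1}$, which is condition (iii). Applying Theorem~\ref{thm:Stabilization_Compatible_Open Book} now in dimension three to $(F_M,h_M)$ stabilized along $A\cap F_M$ shows the restricted data is an open book on $M\#\mathbb{S}^3\cong M$, compatible with $\eta$ in the positive case.

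The main obstacle is the verification that the single five-dimensional stabilization restricts \emph{exactly} to the prescribed three-dimensional one, that is, that $\tau_L$ really does restrict to $\tau_K$ and that the five-dimensional framing convention is consistent with the three-dimensional stabilization framing. This reduces to a local model computation in the relative Weinstein neighborhood $(T^*D^2,T^*D^1)$, where one checks that the standard model of the right-handed Dehn twist along $\mathbb{S}^2$ preserves the $T^*\mathbb{S}^1$ locus and restricts there to the standard model of the right-handed Dehn twist along $\mathbb{S}^1$, while the handle framings match under restriction. Granting this local statement, all remaining conditions of Definitions~\ref{def:Abstract_Rel_Open_Book} and \ref{def:Compatibility_of_Rel_Open_Book} follow formally from the two invocations of Theorem~\ref{thm:Stabilization_Compatible_Open Book}, establishing both the smooth assertion on $(Y\#\mathbb{S}^5,M\#\mathbb{S}^3)\cong(Y,M)$ and the compatibility of the positive relative stabilization with $(\xi,\eta)$.
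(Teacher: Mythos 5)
Your proposal is correct and follows essentially the same route as the paper: the paper's proof likewise observes that the relative stabilization corresponds simultaneously to the two stabilizations $\mathcal{S}^\pm[(F_Y,h_Y);A]$ and $\mathcal{S}^\pm[(F_M,h_M);A\cap F_M]$, invokes Theorem~\ref{thm:Stabilization_Compatible_Open Book} for each, and then checks Definition~\ref{def:Compatibility_of_Rel_Open_Book}. Your write-up is somewhat more explicit than the paper's (in particular about $\tau_L$ restricting to $\tau_K$ and the framing consistency, which the paper leaves implicit), but the underlying argument is the same.
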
 
	
	\begin{proof}
		Observe that both stabilizations $\mathcal{S}^\pm[(F_Y,F_M,h_Y,h_M);A, A\cap F_M]$ correspond to the stabilized open books $\mathcal{S}^\pm[(F_Y,h_Y);A]$ on $Y\# \mathbb{S}^{5}\cong Y$ and $\mathcal{S}^\pm[(F_M,h_M);A\cap F_M]$ on $M\# \mathbb{S}^{3}\cong M$. Therefore, by Theorem \ref{thm:Stabilization_Compatible_Open Book}, both are relative open books on the relative connected sum $(Y\# \mathbb{S}^{5},M\# \mathbb{S}^{3})\cong (Y,M)$, and also $\mathcal{S}^+[(F_Y,F_M,h_Y,h_M);A, A\cap F_M]$ is an open book on the contact connected sum $(Y_\xi\# \mathbb{S}^{5}_{st},M_\eta \# \mathbb{S}^{3}_{st})\cong (Y_\xi,M_\eta)$. Hence, Definition \ref{def:Compatibility_of_Rel_Open_Book} applies.
	\end{proof}
	
	\begin{example} \label{ex:basic_stabilization}
		Consider the simplest compatible open books $(D^4, id_{D^4})$ and $(D^2,id_{D^2})$ on the standard contact spheres $\mathbb{S}^5_{st}$ and $\mathbb{S}^3_{st}$, respectively. Here we consider both disks with their standard symplectic structures, and $D^2$ as a properly and symplectically embedded in $D^4$. One easily checks the conditions in Definition \ref{def:Abstract_Rel_Open_Book} and Definition \ref{def:Compatibility_of_Rel_Open_Book}, and so $(D^4, D^2, id_{D^4}, id_{D^2})$ is a compatible relative open book on the contact pair $(\mathbb{S}^5_{st},\mathbb{S}^3_{st})$. Also it is a well-known construction that $D^*\mathbb{S}^2$ and $D^*\mathbb{S}^1$ can be, simultanaously, obtained from $D^4$ and $D^2$ by attachhing Weinstein handles $H^4$ and $H^2$ along a Legendrian $1$-unknot $\partial A$ (the boundary of a properly embedded Lagrangian disk $A$ in $D^4$) in $\mathbb{S}^3_{st}(=\partial D^4)$ and a Legendrian $0$-unknot $\partial (L\cap D^2)$ in $\mathbb{S}^1_{st}(=\partial D^2)$. Assuming the framing of $H^4$ is chosen correctly (one less than the contact framing of $\partial A$ in $\mathbb{S}^3_{st}=\partial D^4$), we conclude, by Definition \ref{def:Stab_of_Rel_Open_Book}, that
		$(D^*\mathbb{S}^2,D^*\mathbb{S}^1,\tau_{\mathbb{S}^2},\tau_{\mathbb{S}^1})$ is a positive relative stabilization of $(D^4, D^2, id_{D^4}, id_{D^2})$ along $(A,A\cap D^2)$. That is, $$(D^*\mathbb{S}^2,D^*\mathbb{S}^1,\tau_{\mathbb{S}^2},\tau_{\mathbb{S}^1})=\mathcal{S}^+[(D^4, D^2, id_{D^4}, id_{D^2});A, A\cap D^2].$$
		On the other hand, if the framing of $H^4$ is chosen to be one more than the contact framing of $\partial A$, then one concludes that 
		$$(D^*\mathbb{S}^2,D^*\mathbb{S}^1,\tau^{-1}_{\mathbb{S}^2},\tau^{-1}_{\mathbb{S}^1})=\mathcal{S}^-[(D^4, D^2, id_{D^4}, id_{D^2});A, A\cap D^2].$$
		
	\end{example}

	%-----------------------------------------------------------------------------------------------
	%-----------------------------------------------------------------------------------------------
	%-----------------------------------------------------------------------------------------------
	
	\section{Generalized Square Bridge Diagrams} \label{sec:generalized-square-bridge}
	
	Any Legendrian link $\mathbb{K}$ in $(\mathbb{R}^3,\xi_0^3)\subset \mathbb{S}^3_{st}$ can be represented by a square bridge diagram \cite{Ly}. In this section, we will construct \textit{a generalized square bridge diagram} to represent any admissible Legendrian $2$-link $\mathbb{L}$ in $(\mathbb{R}^5,\xi_0^5)\subset \mathbb{S}^5_{st}$ which naturally contains a square bridge diagram of the equatorial link $\mathbb{K}=\mathbb{L}\cap\mathbb{S}^5_{st}$ of $\mathbb{L}$. \\
	
	Let  $\Pi_5:\mathbb{R}^5\rightarrow \mathbb{R}^3: (x_1,y_1,z,x_2,y_2)\mapsto (y_1,z,y_2)$ and $ \Pi_3:  \mathbb{R}^3\rightarrow \mathbb{R}^2: (x_1,y_1,z)\mapsto (y_1,z)$ be the \textit{front projection} maps. In order to picture Legendrian $2$-knots (resp. Legendrian $1$-knots) in $(\mathbb{R}^5,\xi_0^5)$ (resp. $(\mathbb{R}^3,\xi_0^3)$), we consider their \textit{fronts}, i.e., their images under the map $ \Pi_5$ (resp. $ \Pi_3$). Examples of such fronts are depicted for Legendrian unknots in Figure \ref{Fig:fronts}.
	
	\begin{figure}[H]
		\begin{center}
			\includegraphics[width=0.8\textwidth]{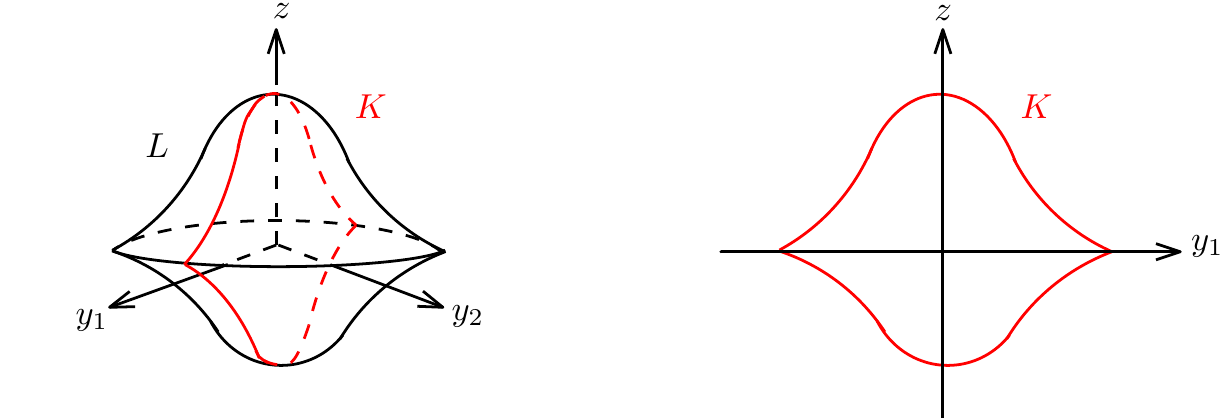}
			\caption{On the left, the front of the admissible Legendrian $2$-unknot $\mathbb{L}$, and, on the right, the front of its isotropic equator $\mathbb{K}$ (Legendrian $1$-unknot in $(\mathbb{R}^3,\xi_0^3)$.}
			\label{Fig:fronts}
		\end{center}
	\end{figure}
	
	\indent We first consider the square bridge diagram (in the $y_1z$-plane) of the front of the equatorial $1$-link $\mathbb{K}=\bigsqcup_{i=1}^{r} K_{i} $ of a given admissible Legendrian $2$-link $\mathbb{L}=\bigsqcup_{i=1}^{r} L_{i}$ such that it consists of the following two types of line segments:
	
	\begin{eqnarray*}
		h_{a_i}&=&\{(x_1,y_1,z,x_2,y_2)\in\mathbb{R}^5:\quad x_1=x_2=y_2=0,\quad z=-y_1+a_i \} \\
		v_{b_j}&=&\{(x_1,y_1,z,x_2,y_2)\in\mathbb{R}^5:\quad x_1=x_2=y_2=0,\quad  z=y_1+b_j \}
	\end{eqnarray*}
	for some real numbers $a_1< a_2<\cdots<a_p$ and  $0< b_1<b_2<\cdots< b_q$, and it bounds a polygonal region in the $y_1z-$plane as in Figure \ref{Fig:sbd of Hopf link} (on the left). Secondly, divide each polygonal region into finitely many square regions $R_k$ for $k=1,2,. . . ,m$ as in Figure \ref{Fig:sbd of Hopf link} (on the right). Such a division can be achieved by applying an appropriate Legendrian isotopy to $\mathbb{L}$ so that the gaps between consequtive $a_i$'s and consequtive $b_j$'s are all equal to the same fixed real number.
	
	\begin{figure}[H]
		\begin{center}
			\includegraphics[width=0.7\textwidth]{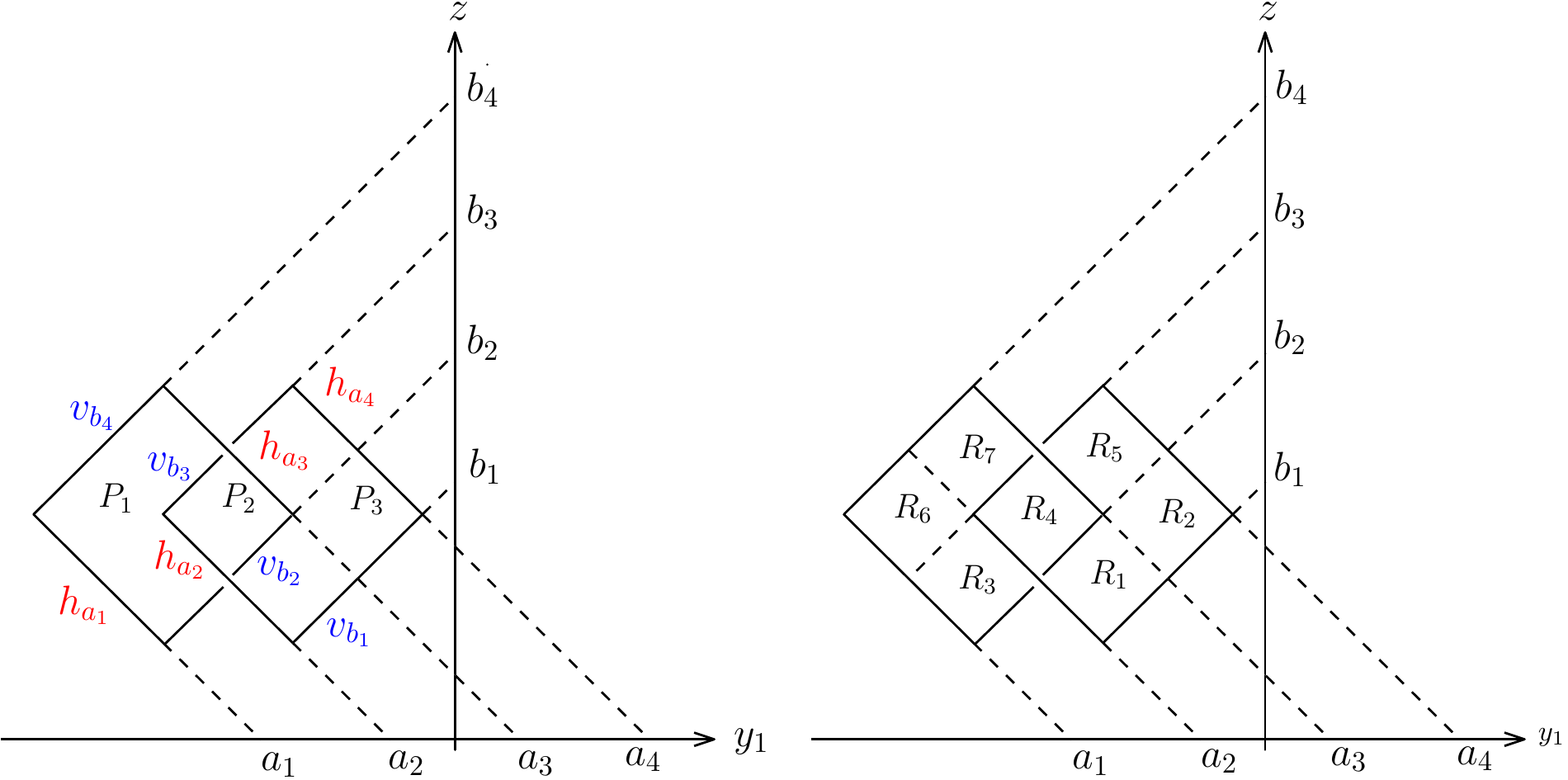}
			\caption{The square bridge diagram of the equatorial isotropic $1$-link (demonstrated when $\mathbb{K}$ is the Hopf link) and its division into squares (on the right).}
			\label{Fig:sbd of Hopf link}
		\end{center}
	\end{figure}
	\indent Thirdly, we construct an octahedron for each square region $R_k$ so that $\partial R_k$ is the equatorial square of the octahedron. This can be done by composing (gluing) eight triangular plane segments of the following four types:
	
	\begin{eqnarray*}
		t_{a_i}^{1,1}\;&=&\{(x_1,y_1,z,x_2,y_2)\in\mathbb{R}^5:\quad x_1=x_2=0,\quad z=-y_1-y_2+ a_i \}\\
		t_{a_i}^{1,-1}\;&=&\{(x_1,y_1,z,x_2,y_2)\in\mathbb{R}^5:\quad x_1=x_2=0,\quad z=-y_1+y_2+ a_i \}\\
		t_{b_j}^{-1,1}&=&\{(x_1,y_1,z,x_2,y_2)\in\mathbb{R}^5:\quad x_1=x_2=0,\quad z=y_1-y_2+ b_j \}\\
		t_{b_j}^{-1,-1}&=&\{(x_1,y_1,z,x_2,y_2)\in\mathbb{R}^5:\quad x_1=x_2=0,\quad z=y_1+y_2+ b_j \}.
	\end{eqnarray*}
	
	A typical octahedron for a typical square region $R_k$ is depicted in Figure \ref{Fig:octahedron} which is indeed what we call as the \textit{generalized square bridge position} of a standard Legendrian $2$-unknot whose front has appeared in Figure \ref{Fig:fronts} above.
	\begin{figure}[H]
		\includegraphics[width=0.65\textwidth]{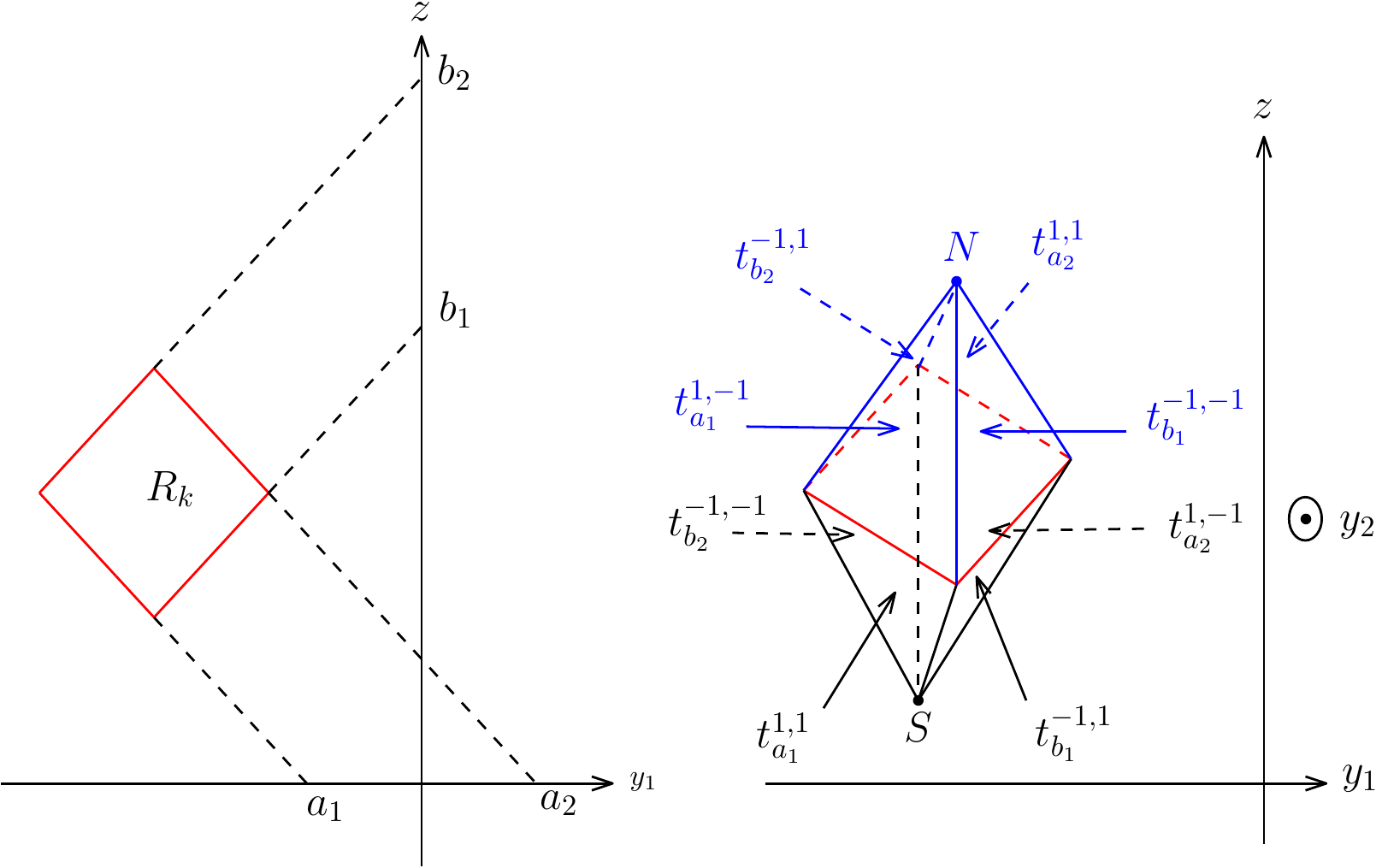}
		\caption{An octahedron corresponding to $R_k$ (a generalized square bridge position of a standard Legendrian $2$-unknot.}
		\label{Fig:octahedron}
	\end{figure}
	
	In Figure \ref{Fig:octahedron}, the upper part of the octahedron contains the triangles $t_{a_1}^{1,-1}$, $t_{a_2}^{1,1}$, $t_{b_1}^{-1,-1}, t_{b_2}^{-1,1}$, and the lower part contains the triangles $t_{a_1}^{1,1}$, $t_{a_2}^{1,-1}$, $t_{b_1}^{-1,1}, t_{b_2}^{-1,-1}$. Also the north and south poles of the octahedron are $\displaystyle N=(0, \frac{a_1-b_1}{2}, \frac{a_1+b_2}{2}, 0, \frac{b_2-b_1}{2} )$ and $\displaystyle S=(0, \frac{a_1-b_1}{2}, \frac{a_1+b_2}{2}, 0, \frac{b_1-b_2}{2} )$.\\
	
	Finally, we define
	\begin{definition}
		The \textit{generalized square bridge diagram}, denoted by $\Delta(\mathbb{L})$, of $\mathbb{L}$ is a cellular $2$-complex in the $(y_1z\,y_2)$-space obtained by considering all $R_k$'s (in the square bridge diagram of $\mathbb{K}$) and taking the union of corresponding $m$ octahedrons (see Figure \ref{Fig:Hopf link}).
	\end{definition}
	
	\begin{figure}[H]
		\begin{center}
			\includegraphics[width=0.4\textwidth]{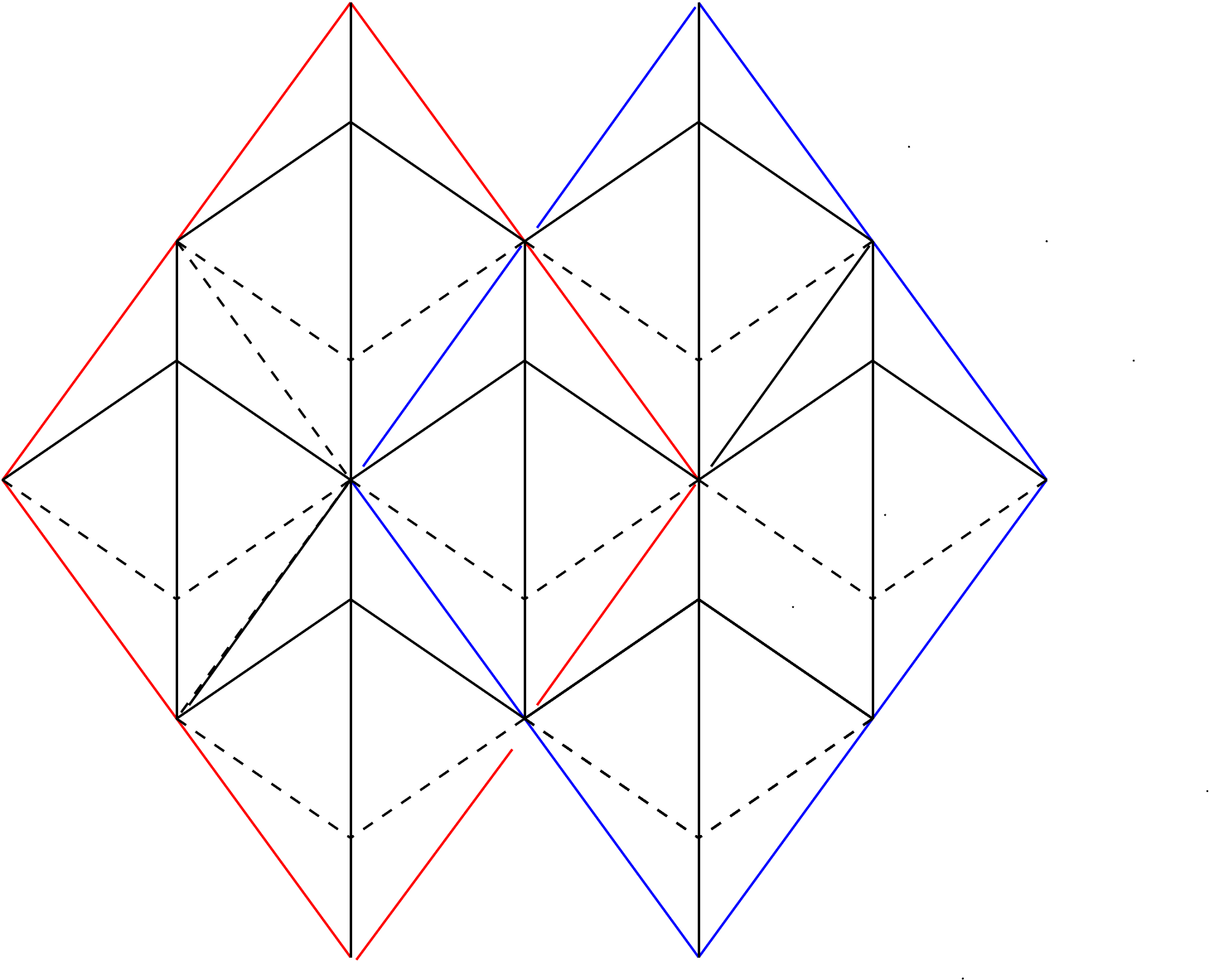}
			\caption{The generalized square bridge diagram $\Delta(\mathbb{L})$ of the admissible Legendrian $2$-link $\mathbb{L}$ whose equatorial link $\mathbb{K}$ is the Hopf link.}
			\label{Fig:Hopf link}
		\end{center}
	\end{figure}
	
	Note that the square bridge diagram $\Gamma(\mathbb{K})$ of $\mathbb{K}$ is a cellular $1$-subcomplex of $\Delta(\mathbb{L})$.  Next, we will show that starting from its generalized square bridge diagram $\Delta(L)\subset \Delta(\mathbb{L})$, one can reconstruct each component $L$ of $\mathbb{L}$ in $(\mathbb{R}^5,\xi_0^5)\subset \mathbb{S}^5_{st}$. More precisely,
	
	\begin{proposition} \label{Prop:reconstruction}
		For any component $L_i$ of \,$\mathbb{L}=\bigsqcup_{i=1}^{r} L_{i}$, starting from $\Delta(L_i)$, one can construct a piecewise smooth Legendrian $2$-sphere $L'_i$ in $(\mathbb{R}^5,\xi_0^5)\subset \mathbb{S}^5_{st}$ by gluing appropriate Legendrian and planar triangles and rectangles so that the smooth Legendrian $2$-sphere  obtained by rounding the corners of $L_i'$ is Legendrian isotopic to $L_i$. Moreover, the piecewise smooth equatorial $1$-link $\mathbb{K}'=\bigsqcup_{i=1}^{r} K'_{i}$ ($K'_{i}=L'_i \cap\mathbb{S}^3_{st}$) and $\mathbb{K}$ have the same linking matrix.
	\end{proposition}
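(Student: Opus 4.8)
The plan is to perform the reconstruction cell by cell: first identify the Legendrian lift of every building block from its front, then glue the blocks according to the combinatorial pattern carried by $\Delta(L_i)$, and finally check that the result is a $2$-sphere Legendrian isotopic to $L_i$. I would begin with the local model. Each triangle type is defined by an equation writing $z$ as an affine function of $(y_1,y_2)$; since a Legendrian surface satisfies $x_1=-\partial z/\partial y_1$ and $x_2=-\partial z/\partial y_2$, the Legendrian lift of $t_{a_i}^{\epsilon_1,\epsilon_2}$ (and likewise of $t_{b_j}^{\epsilon_1,\epsilon_2}$) sits at the constant value $(x_1,x_2)=(\epsilon_1,\epsilon_2)\in\{\pm1\}^2$. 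Thus each such triangle lifts to a genuinely flat Legendrian triangle in $(\mathbb{R}^5,\xi_0^5)$, and the analogous computation for the segments $h_{a_i},v_{b_j}$ gives Legendrian segments at $x_1=\pm1$. Along every equatorial edge of an octahedron the two incident faces carry opposite values of $x_2$, so the equatorial square is exactly the cusp (fold) locus of the front, recovering the stated fact that a single octahedron is the front of a standard Legendrian $2$-unknot.

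Next I would describe the gluing. Following the closed polygonal front of $K_i$ through the squares $R_k$ that it meets, consecutive octahedra share an equatorial edge; over such a shared edge I would discard the two interior faces and join the surviving upper (resp. lower) Legendrian triangles of the neighbouring octahedra by flat connecting pieces (planar rectangles, together with planar triangles where the local pattern requires). These connectors are chosen so that their fronts match the slopes of the adjacent triangles, which forces their lifts to carry the same constant $(x_1,x_2)$ as the neighbour and hence keeps the assembly piecewise Legendrian; the check is again the direct verification that $dz+x_1\,dy_1+x_2\,dy_2$ vanishes identically on each affine $2$-plane containing a connector. Assembling the upper triangles, the lower triangles, and the connectors, and then rounding corners by the standard front-smoothing lemma (a Legendrian isotopy), produces the smooth Legendrian surface $L_i'$.

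To pin down the topology I would observe that this assembly is the boundary of a thickening, in the $y_2$-direction, of the planar disk bounded by the front of $K_i$; hence $L_i'$ is a $2$-sphere, and the same conclusion can be double-checked by an Euler characteristic count on the cell structure. That $L_i'$ is Legendrian isotopic to $L_i$ then follows because the generalized square bridge position was itself produced from $L_i$ by a Legendrian isotopy, and the gluing recipe reverses that normalization up to corner-rounding, which stays within the Legendrian class. For the equatorial statement I would set $y_2=x_2=0$, i.e. intersect $L_i'$ with $\mathbb{S}^3_{st}$; this cuts out exactly the equatorial circle $K_i'$, whose front is the square bridge subdiagram $\Gamma(K_i)\subset\Delta(\mathbb{L})$. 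Since $\mathbb{K}'$ is reconstructed from the very diagram $\Gamma(\mathbb{K})$ carried by $\mathbb{K}$, the three-dimensional square bridge reconstruction of Lyon shows $\mathbb{K}'$ is isotopic to $\mathbb{K}$; in any case the linking numbers of distinct components and the Thurston--Bennequin framings are read directly off the shared diagram via signed crossings and cusps, so $\mathbb{K}'$ and $\mathbb{K}$ have the same linking matrix.

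The step I expect to be the main obstacle is the gluing in the second paragraph: one must check simultaneously that every inserted connector keeps the surface Legendrian (slope matching across each shared edge), that the surviving faces close up into an $S^2$ rather than a non-manifold or a surface of higher genus, and that the corner-rounding can be carried out without leaving the Legendrian category. The careful bookkeeping of which faces survive, and how the connectors are oriented along the polygonal path traced by $K_i$, is where the real work lies.
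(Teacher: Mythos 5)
Your overall strategy --- lift each planar face of the octahedra to a flat Legendrian plate via $x_i=-\partial z/\partial y_i$, connect the plates, assemble a sphere, round corners, and read the linking matrix off the shared square bridge diagram --- is the same as the paper's, and your local model for the triangles is correct. But there are two concrete gaps in the gluing step, which you yourself flag as the main obstacle. First, your claim that a connector's lift ``carries the same constant $(x_1,x_2)$ as the neighbour'' cannot be right: the four lifted triangle types sit at the four distinct corners of $\{\pm 1\}^2$ in the $(x_1,x_2)$-coordinates, so any piece joining two adjacent triangles must \emph{interpolate} at least one of $x_1,x_2$ between $-1$ and $+1$. Such a piece stays Legendrian not because $x$ is constant but because the conjugate coordinate is frozen along the interpolation (e.g.\ the paper's plates $P_{\pm 1}$ hold $y_1$ constant while $x_1$ runs over $[-1,1]$, so $x_1\,dy_1$ vanishes on the tangent space, and $dz+x_2\,dy_2=0$ is arranged separately). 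As written, your connectors either fail to meet one of their two neighbours or fail to be Legendrian.

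Second, and more seriously, nothing in your assembly lives at $x_2=0$ except boundary edges: all the lifted octahedron faces sit at $x_2=\pm 1$, so the surface you describe does not contain a curve at $x_2=y_2=0$, and the claim that intersecting with $\mathbb{S}^3_{st}$ ``cuts out exactly $K_i'$'' has nothing to cut out. The paper resolves this by inserting a \emph{middle part}: a Legendrian annular band in the hyperplane $\{y_2=0\}$, built from rectangles over the segments $h_{a_i},v_{b_j}$ (with $x_1=\pm 1$ fixed and $x_2$ running over $[-1,1]$) and triangles over connecting arcs parallel to the $x_1$-axis. This band contains the lifted equatorial knot at its core $x_2=y_2=0$, realizes the over/under crossings of $\Gamma(\mathbb{K})$ by placing the $h$-lifts at $x_1=1$ above the $v$-lifts at $x_1=-1$ (which is also what makes the linking-matrix claim a computation rather than an appeal to Lyon), and exhibits $L_i'$ explicitly as an annulus capped off by two disks, so the $2$-sphere topology and the identity $K_i'=L_i'\cap\mathbb{S}^3_{st}$ come for free. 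Without this middle band your Euler-characteristic/thickening argument for the topology is not grounded in an actual cell structure, and the ``moreover'' part of the statement is unproved.
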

	
	\begin{proof} %For simplicity, set $L=L_i$ and $K=K_i=L_i \cap \mathbb{S}^3_{st}$. 
		We first lift each line segment $h_{a_i}$ and $v_{b_j}$ in $\Gamma(\mathbb{K})$ (divided into square regions) along the $x_1$-axis such that the lifts are disjoint Legendrian line segments in $(\mathbb{R}^3,\xi_0^3)\subset \mathbb{S}^3_{st}$ given by
		
		\begin{eqnarray*}
			H_{a_i}&=&\{(x_1,y_1,z,x_2,y_2)\in\mathbb{R}^5:\quad x_1=1, x_2=y_2=0,\quad z=-y_1+a_i \}\\
			V_{b_j}&=&\{(x_1,y_1,z,x_2,y_2)\in\mathbb{R}^5:\quad x_1=-1, x_2=y_2=0,\quad z=y_1+b_j \}.
		\end{eqnarray*}
		
		Then we connect each segment $H_{a_i}$  to certain segments $V_{b_j}$ via Legendrian linear arcs $I_k^l$ which are parallel to the $x_1$-axis so that all of the (piecewise smooth) Legendrian unknots $\gamma_k$ in $\Gamma(\mathbb{K})$ are obtained in $(\mathbb{R}^3,\xi_0^3)$ as in \cite{Ar}. The construction of $\gamma_k$ (for a typical square region $R_k$) is illustrated in Figure \ref{Fig:Legendrian unknot}. Note that by construction the Legendrian unknot $\gamma_k$ is attached to $\gamma_{k+1}$ by either sidewise or by opening a back gate. As a result, we have constructed the lift $\widetilde{\Gamma}(\mathbb{K})$ of $\Gamma(\mathbb{K})$ as a Legendrian cellular $1$-complex in $(\mathbb{R}^3,\xi_0^3)\subset \mathbb{S}^3_{st}$ (which is also an isotropic cellular $1$-complex in $(\mathbb{R}^5,\xi_0^5)\subset \mathbb{S}^5_{st}$). Observe that by construction there exists a piecewise smooth Legendrian $1$-link $\mathbb{K}'=\bigsqcup_{i=1}^{r} K'_{i} \subset \widetilde{\Gamma}(\mathbb{K})$ which we can round its corners (in its $\epsilon$-neighborhood) so that the resulting smooth Legendrian link is Legendrian isotopic to $\mathbb{K}$. In particular, the linking matrices of $\mathbb{K}'$ and $\mathbb{K}$ are the same. Hence, the second claim in the statement will be satisfied once we prove the first one (i.e., once we construct $L_i'$) in such a way that $K'_{i}=L'_i \cap\mathbb{S}^3_{st}$. 
		
		\begin{figure}[H]
			\includegraphics[width=0.6\textwidth]{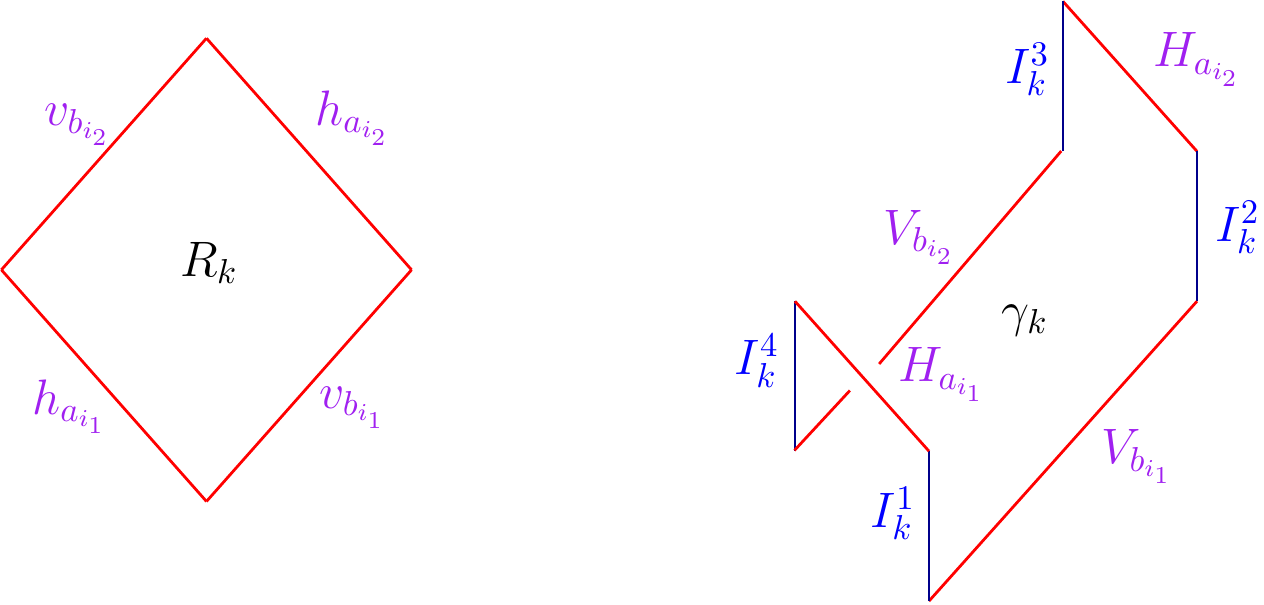}
			\caption{A typical square region $R_k$ and the Legendrian unknot $\gamma_k$.}
			\label{Fig:Legendrian unknot}
		\end{figure}
		
		Next we will construct a Legendrian thickening $\mathcal{L}(\widetilde{\Gamma}(\mathbb{K}))$ of  $\widetilde{\Gamma}(\mathbb{K})$ (the union of all $\gamma_k$'s) which will contain the neighborhood of each (equatorial knot) $K_i'$ inside Legendrian $L_i'$ that we will eventually construct. Although there is no unique way of Legendrian thickening (need to choose a thickening direction), we do this in a systematic way as follows: For each $k=1, ..., m$, we first construct the Legendrian thickening $M_k$ (which we call a \textit{middle part}) of $\gamma_k$ in the way explained below. Then we form 
		$\mathcal{L}(\widetilde{\Gamma}(\mathbb{K}))\doteq\bigsqcup_{k=1}^{m} M_k$ where the union is taken over the common parts of $M_k$'s (By the construction below, if two $\gamma_k$'s have a common edge, the corresponding $M_k$'s have a common rectangular part which is the Legendrian thickening of the common edge.) \\ 
		
		We construct each middle part $M_k$ so that it consists of 4 Legendrian rectangular plates and 16 Legendrian triangular ones as shown in the Figure \ref{Fig:Middle part}. The whole $M_k$ lies in the hyperspace $\{y_2=0\}\subset (\mathbb{R}^5,\xi_0^5)$, and $\gamma_k$ lies as the core of $M_k$ where $x_2=y_2=0$.
		
		\begin{figure}[H]
			\begin{center}
				\includegraphics[width=0.7\textwidth]{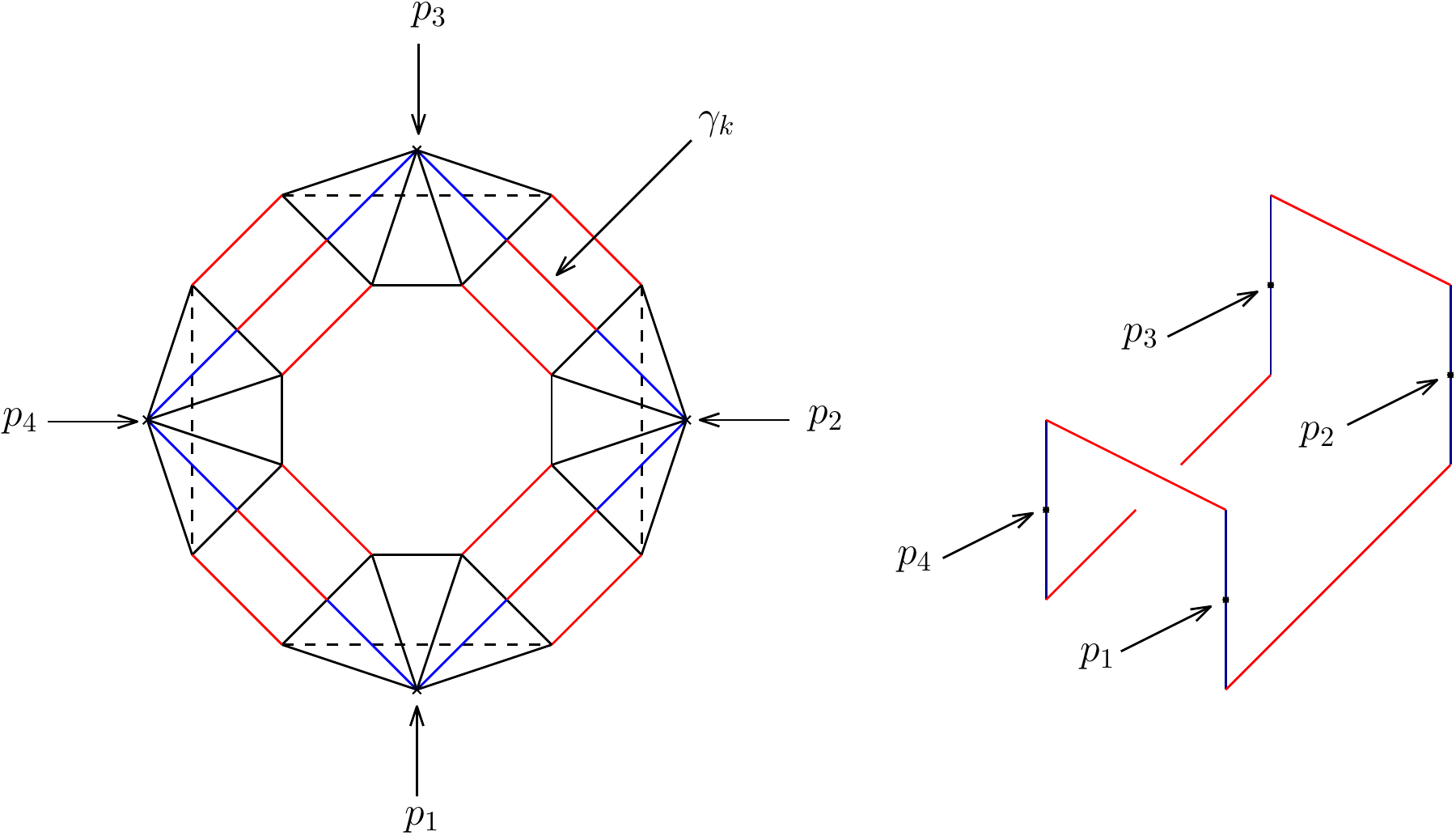}
				\caption{A shematic picture of the middle part $M_k$ for a typical square $R_k$ (on the left), and the isotropic unknot $\gamma_k$ (on the right).}
				\label{Fig:Middle part}
			\end{center}
		\end{figure}
		
		For $H_i$ and $V_j$ parts of $\gamma_k$, we take their Legendrian thickenings to be the $4$ Legendrian rectangles $S_{{H}_{i}}$ and $S_{{V}_{j}}$ on which $x_1=\pm 1$ and $-1\leq x_2 \leq 1$. On the other hand, for ${I}_{k}$ parts of $\gamma_k$, their Legendrian thickenings totally consist of $16$ Legendrian triangles $S_{{I}_{k}}$ on which both $x_1$ and $x_2$ change between $-1$ and $1$ in such a way that each group of $4$ triangles together join an $S_{{H}_{i}}$ to one of the two of its neighboring $S_{{V}_{j}}$'s. In particular, $x_1=x_2=0$ at the points $p_1$, $p_2$, $p_3, p_4$ of $M_k$ which are the midpoints of the arcs ${I}_{k}$ in $\gamma_k$. (See Figure \ref{Fig: Middle part 2} for a shematic picture of this construction performed in a four dimensional space $\{y_2=0\}\subset (\mathbb{R}^5,\xi_0^5)$.)
		
		\begin{figure}[H]
			\begin{center}
				\includegraphics[width=0.75\textwidth]{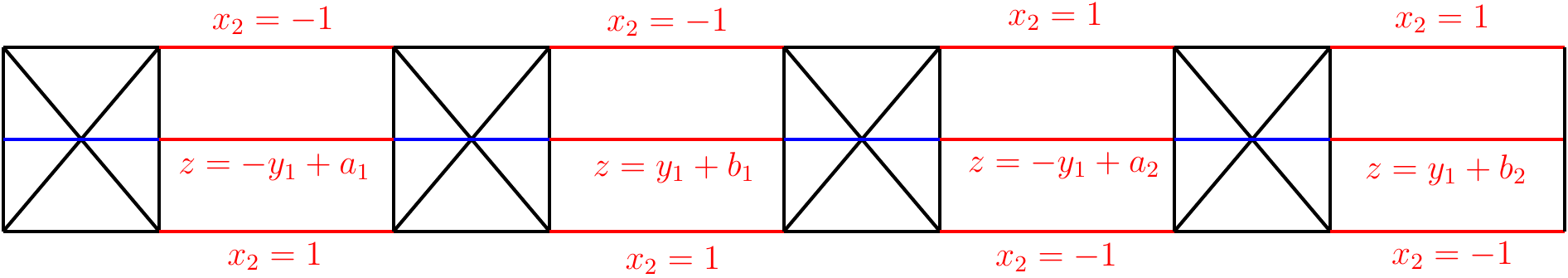}
				\caption{A shematic picture of the construction of $M_k$. (The most left and right vertical edges are identified.)}
				\label{Fig: Middle part 2}
			\end{center}
		\end{figure}
		
		By construction, all $M_k$'s touch together along their common parts, and they together form the Legendrian thickening $\mathcal{L}(\widetilde{\Gamma}(\mathbb{K}))\doteq\bigsqcup_{k=1}^{m} M_k$. We note that $\mathbb{K}' \subset \widetilde{\Gamma}(\mathbb{K}) \subset  \mathcal{L}(\widetilde{\Gamma}(\mathbb{K}))$.\\
		
		Now we consider the triangles of the octahedrons in the generalized square bridge diagram $\Delta(\mathbb{L})$. We lift each of the triangular plane segments $t_{a_i}^{1,1}$, $t_{a_i}^{1,-1}$, $t_{b_j}^{-1,1}$, $t_{b_j}^{-1,-1}$ (to an appropriate $x_1$ and $x_2$ levels) so that the resulting lifts are disjoint Legendrian triangular plates lying on the following $4$-types of planes:
		
		$$\begin{array}{rclll}
			T_{a_i}^{1,1}\;\;&=&\{(x_1,y_1,z,x_2,y_2)\in\mathbb{R}^5:\quad x_1=1,\quad &x_2 = 1, \quad &z=-y_1-y_2+ a_i \}\\
			T_{a_i}^{1,-1}\;&=&\{(x_1,y_1,z,x_2,y_2)\in\mathbb{R}^5:\quad x_1=1,\quad &x_2 =-1, \quad &z=-y_1+y_2+ a_i \} \\
			T_{b_j}^{-1,1}\;&=&\{(x_1,y_1,z,x_2,y_2)\in\mathbb{R}^5:\quad x_1=-1,\quad &x_2 = 1, \quad &z=y_1-y_2+ b_j \} \\
			T_{b_j}^{-1,-1}&=&\{(x_1,y_1,z,x_2,y_2)\in\mathbb{R}^5:\quad x_1=-1,\quad &x_2 = -1, \quad &z=y_1+y_2+ b_j \} .
		\end{array}$$
		
		The next lemma shows that the triangular planes listed above are all Legendrian.
		\begin{lemma}
			For any $c\in \mathbb{R}$, the planes $$T_c^{k,\ell}=\{(x_1,y_1,z,x_2,y_2)\in\mathbb{R}^5:\quad x_1=m,\quad x_2=n,\quad z=ky_1+\ell y_2+c\}$$ are Legendrian in $(\mathbb{R}^5,\xi_0^5)\subset \mathbb{S}^5_{st} $ \;if and only if\; $k=-m$ \;and\; $\ell =-n$.
		\end{lemma}
		\begin{proof}
			Need to show any vector tangent to $T_c^{k,\ell}$ lies in $\textrm{Ker}(\alpha_0^5)$ where 
			$\alpha_0^5=dz+x_1dy_1+x_2dy_2$. The tangent space of $T_c^{k,\ell}$ at any point $p\in T_c^{k,\ell}$ is spanned by the 
			vectors $\partial / \partial y_1+k\,\partial /\partial z$ and $\partial /\partial y_2+\ell \,\partial /\partial z$. 
			That is,  $$T_pT_c^{k,\ell}=\left\langle \frac{\partial}{\partial y_1}+k\frac{\partial}{\partial z}, \;\frac{\partial}{\partial y_2}+\ell \frac{\partial}{\partial z}\right\rangle.$$ 
			Then one can easily verify that   
			$$\alpha_0^5\left(\frac{\partial}{\partial y_1}+k\frac{\partial}{\partial z}\right)=k+m=0 \Leftrightarrow k=-m, \quad \alpha_0^5\left(\frac{\partial}{\partial y_1}+\ell\frac{\partial}{\partial z}\right)=\ell +n=0 \Leftrightarrow \ell =-n .$$
			Hence, the claim follows from linearity.
		\end{proof}
		
		Next, using the above triangles, for each square region $R_k$ we will obtain two disjoint piecewise smooth disks $\mathcal{D}^{'}_k$ and $\mathcal{D}^{''}_k$ which will be the upper and lower part of a piecewise smooth Legendrian $2$-unknot, called a \textit{Legendrian diamond} in $(\mathbb{R}^5, \xi_0^5)$  as shown in the Figure \ref{Fig: Upper and Lower part} and Figure \ref{Fig: Diamond}.
		
		\begin{figure}[H]
			\begin{center}
				\includegraphics[width=0.7\textwidth]{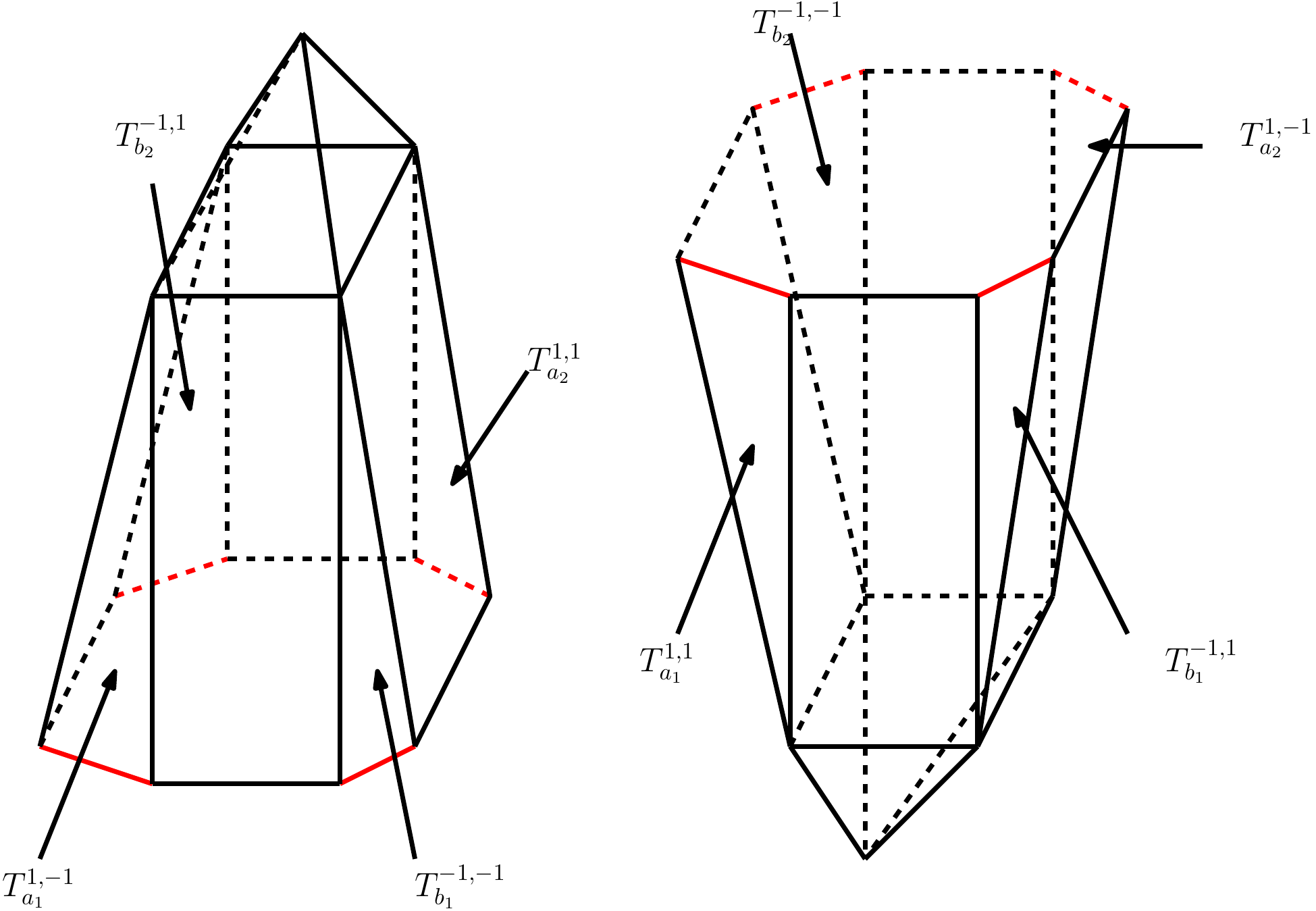}
				\caption{The Legendrian piecewise smooth disk $\mathcal{D}^{'}_k$(on the left) and the Legendrian piecewise smooth disk $\mathcal{D}^{''}_k$  (on the right).}
				\label{Fig: Upper and Lower part}
			\end{center}
		\end{figure}
		
		The disk $\mathcal{D}^{'}_k$ consists of $4$ disjoint Legendrian triangular plates $T_{a_1}^{1,-1}$, $T_{a_2}^{1,1}$, $T_{b_1}^{-1,-1}$,$T_{b_2}^{-1,1}$, $4$ Legendrian rectangular plates (connecting them), and $4$ other Legendrian triangular plates at the top, and likewise, the  disk $\mathcal{D}^{''}_k$ consists of $4$ disjoint Legendrian triangular plates $T_{a_1}^{1,1}$,  $T_{a_2}^{1,-1}$, $T_{b_1}^{-1,1}$, $T_{b_2}^{-1,-1}$, $4$ Legendrian rectangular plates (connecting them), and $4$ other Legendrian triangular plates at the botton (Figure \ref{Fig: Upper and Lower part}). For each disk, the rectangular plates connect certain two of the disjoint triangular plates, and  the four triangular plates connecting the four Legendrian rectagular plates are the following: 
		\begin{align*}
			&{\tiny P_{\pm 1}=\{z=\pm y_2+ \frac{a_i+b_i}{2},\quad y_1=\frac{a_i-b_i}{2},\quad  x_2=\mp 1,\quad -1 \leq x_1 \leq 1\}}\\
			& P^{'}_{\pm 1}=\{y_2=\pm y_1\pm \frac{b_2-a_1}{2}, \quad z=\frac{a_1+b_2}{2} \quad -1 \leq x_1=\mp x_2 \leq 1\}.
		\end{align*}
		\indent Also, for each disk, there are two plates are composed of $4$-triangular plates on the top and bottom:
		\begin{align*}
			&P_{t}=\{ y_1=\frac{a_1-b_1}{2},\quad z=\frac{a_1+b_2}{2},\quad y_2= \frac{b_2-b_1}{2}, \quad  -1 \leq x_1, x_2 \leq 1\}\\
			&P_{b}=\{ y_1=\frac{a_1-b_1}{2},\quad z=\frac{a_1+b_2}{2},\quad y_2= \frac{b_1-b_2}{2}, \quad  -1 \leq x_1, x_2 \leq 1\} .
		\end{align*}
		
		\begin{figure}[H]
			\begin{center}
				\includegraphics[width=0.3\textwidth]{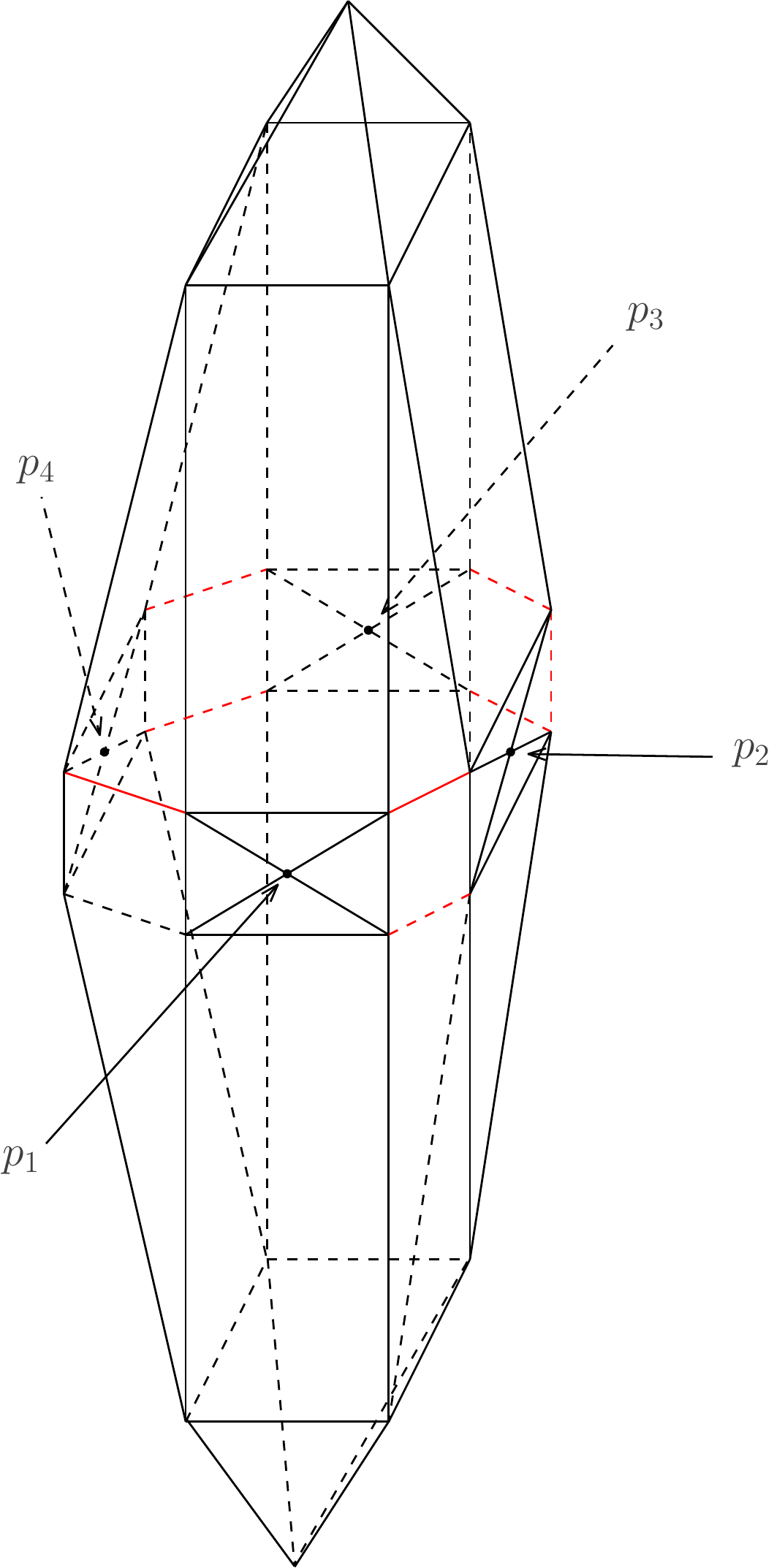}
				\caption{The Legendrian diamond $\mathcal{D}_k$ (a piecewise smooth Legendrian $2$-unknot).}
				\label{Fig: Diamond}
			\end{center}
		\end{figure}
		From their constructions, it is straight forward to check that all the triangular and rectangular plates in $\mathcal{D}'_k$, $\mathcal{D}''_k$ and $M_k$ meet each other along the common parts of their boundaries appropriately, and they together form the Legendrian diamond $\mathcal{D}_k\doteq \mathcal{D}^{'}_k\cup M_k\cup \mathcal{D}^{''}_k$ in $(\mathbb{R}^5,\xi_0^5)\subset \mathbb{S}^5_{st}$. We note that the equatorial link $\mathbb{K}=\bigsqcup_{i=1}^{r} K_{i}$ is reconstructed as the piecewise smooth $1$-link $\mathbb{K}'=\bigsqcup_{i=1}^{r} K'_{i}$ contained in the union $\mathcal{L}(\widetilde{\Gamma}(\mathbb{K}))(=\bigsqcup_{k=1}^{m}M_k)$ which can be realized as the union of the neighborhoods of the isotropic equatorial unknots inside the union $\bigsqcup_{k=1}^{m} \mathcal{D}_k$ of all Legendrian diamonds.\\
		
		To verify the first claim, let $L_i$ be any component of $\mathbb{L}$. Consider the subcollection of all square regions $\{R_{i_1},R_{i_2}, ..., R_{i_n}\} \subset \{R_{1},R_{2}, ..., R_{m}\}$ which are contained in the square bridge diagram $\Gamma(K_i) \subset \Gamma(\mathbb{K})$. Then the front $\Pi_3(K_i)$ is contained in the union $\bigsqcup_{j=1}^{n} R_{i_j}$, and so the corresponding piecewise smooth isotropic unknot $K_i'$ is contained in the Legendrian thickening 
		$\mathcal{L}(\widetilde{\Gamma}(K_i))=\bigsqcup_{j=1}^{n} M_{i_j} \subset \bigsqcup_{j=1}^{n} \mathcal{D}_{i_j}$. Let $\mathcal{L}(K_i')\subset \mathcal{L}(\widetilde{\Gamma}(K_i))$ denote the Legendrian thickening of $K_i'$ (i.e., the union of the plates in $\bigsqcup_{j=1}^{n} M_{i_j}$ which retracts onto $K_i'$). The boundary $\partial (\mathcal{L}(K_i'))$ consists of two (isotropic) push-offs of $K_i'$ where $\mathcal{L}(K_i')$ meets with the corresponding rectangular and triangular plates in the upper and lower disks $\mathcal{D}^{'}_{i_j}$, $\mathcal{D}^{''}_{i_j}$ of the Legendrian diamonds $\mathcal{D}_{i_1},\mathcal{D}_{i_2}, ..., \mathcal{D}_{i_n}$.
		Finally, we define $L_i'$ as the union
		$$L_i'\doteq \mathcal{L}(K_i') \cup_{\partial (\mathcal{L}(K_i'))} \bigsqcup_{j=1}^{n}(\overline{\mathcal{D}_{i_j}\setminus M_{i_j}}).$$
		Clearly, $L_i'$ is a piecewise smooth Legendrian $2$-unknot in $\mathbb{S}^5_{st}$ with its equatorial $1$-unknot $K_i'$ in $\mathbb{S}^3_{st}$ as required. After rounding the corners (and sharp edges) in $L_i'$ (and so, in $K_i'$) in a small neighborhood of $L_i'$, we obtain a smooth Legendrian admissible $2$-unknot, say $L_i''$, with its equatorial $1$-unknot $K_i''$ which is Legendrian isotopic to the equatorial $1$-unknot $K_i$ of $L_i$. By extending this isotopy over the union of the upper and the lower Legendrian disks of $L_i''$ (i.e., over $L_i''\setminus K_i''$), we conclude that $L_i''$ and $L_i$ are Legendrian isotopic in $\mathbb{S}^5_{st}$.
		This finishes the proof of Proposition \ref{Prop:reconstruction}.
	\end{proof}
	
	\begin{remark}
		As observed in the proof of Proposition \ref{Prop:reconstruction}, the linking matrices of the links $\mathbb{K}$, $\mathbb{K}'$, and so $\mathbb{K}''\doteq\bigsqcup_{i=1}^{r} K_{i}''$ are all the same. Also if two line segments $h_{a_i}$, $v_{b_j}$ cross each other in the square bridge diagram $\Gamma(\mathbb{K})$,  the lift $H_{a_i}$ of $h_{a_i}$ crosses over the lift $V_{b_j}$ in the lifted diagram $\widetilde{\Gamma}(\mathbb{K})$ as being located in a larger $x_1$-level. Both lifts $H_{a_i}$, $V_{b_j}$ live on the core of the union $\mathcal{L}(\widetilde{\Gamma}(\mathbb{K}))=\bigsqcup_{k=1}^{m}M_k$. By the construction of each middle part $M_k$, the Legendrian thickening of $H_{a_i}$ is still \textit{crossing over} that of $V_{b_j}$ \textit{along} the $x_1$-direction because Legendrian thickenings (near such over-under crossings) are performed along the $x_2$-direction only, and so they are performed in the $x_1$-level where $H_{a_i}$, $V_{b_j}$ live. Moreover, the components of $\mathbb{L}'$ can be rounded (in their small neighborhoods) in such a way that the resulting $L_i''$'s are disjoint, and so we obtain a smooth Legendrian $2$-link $\mathbb{L}''=\bigsqcup_{i=1}^{r} L_{i}''$ (with the equatorial link $\mathbb{K}''$) which is Legendrian isotopic to the original link $\mathbb{L}$. In particular, the linking matrices of $\mathbb{L}$, $\mathbb{L}''$ are the same, and indeed, they coinside with those of $\mathbb{K}$ and $\mathbb{K}''$. This is one of the motivations of the autors in defining relative $2$-links and their generalized square bridge diagrams.
	\end{remark}
	
	%-----------------------------------------------------------------------------------------------
	%-----------------------------------------------------------------------------------------------
	%-----------------------------------------------------------------------------------------------
	
	\section{The Algorithm} \label{sec:algorithm}
	
	\subsection{Proof of Theorem \ref{thm:main thm}}
	Start with the generalized square bridge diagram $\Delta(\mathbb{L})$ of a given admissible Legendrian $2$-link $\mathbb{L}$ in $(\mathbb{R}^5,\xi_0^5)$ which consists of the square bridge diagram $\Gamma(\mathbb{K})$ of the isotropic equatorial link $\mathbb{K}$ which is Legendrian in $(\mathbb{R}^3,\xi_0^3)$. As in the previous section, one can construct a Legendrian diamond $\mathcal{D}_k=\mathcal{D}^{'}_k\cup M_k\cup \mathcal{D}^{''}_k$ for each square region $R_k$ in $\Gamma(\mathbb{K})$. In what follows, we will construct a $4$-dimensional \textit{ribbon} $\mathcal{R}_{\mathcal{D}_k}$ for each such diamond. Then by iteratively joining a ribbon $\mathcal{R}_{\mathcal{D}_k}$ to the union of the ribbons constructed in the previous step, we will reach the union of all ribbons so that at each step the union will be realized as a page of a compatible open book on $\mathbb{S}^5_{st}$ such that it contains the corresponding page of a simultanously evolving compatible open book on $\mathbb{S}^3_{st}$. We will achieve this by making use of relative positive stabilizations of compatible relative open book decompositions. In order to construct each $\mathcal{R}_{\mathcal{D}_k}$, we will construct a suitable ribbon for each Legendrian plates of $\mathcal{D}_k$ so that their union is the required ribbon $\mathcal{R}_{\mathcal{D}_k}$. \\
	
	For the middle part $M_k$ of the diamond $\mathcal{D}_k$, first consider its $S_{{I}_{k}}$ part which consists of $16$ Legendrian triangles. We construct $16$ ribbons $\mathcal{R}_{S_{{I}_{k}}}$ ($4$-dimensional compact hypersurface in $(\mathbb{R}^5,\xi_0^5)$ with corners) by following the $4$-dimensional contact planes along these $16$ triangular plates. More precisely, $$\mathcal{R}_{S_{{I}_{k}}}\doteq\{(p,v) \; : \; p \in S_{{I}_{k}}, \;\; v \in \xi_0^5|_p,\;\; \Vert v\Vert \leq \epsilon \}\subset \xi_0^5|_{S_{{I}_{k}}}.$$
	
	For each rectangular parts $S_{{H}_{i}}$, $S_{{V}_{j}}$ (there are 4 of them in total) of $M_k$, we consider two types of ribbons in $(\mathbb{R}^5,\xi_0^5)$ given by
	\begin{eqnarray*}
		\mathcal{R}_{S_{{H}_{i}}} &\doteq& \{z=-y_1+a_i, \quad 1-\epsilon  \leq x_1 \leq 1+\epsilon ,\quad -1 \leq x_2 \leq 1\}\\
		\mathcal{R}_{S_{{V}_{j}}} &\doteq& \{z=y_1+b_j, \quad -1-\epsilon  \leq x_1 \leq -1+\epsilon ,\quad -1 \leq x_2 \leq 1\}
	\end{eqnarray*}
	which are topologically the $2$-dimensional thickenings of $S_{{H}_{i}}$, $S_{{V}_{j}}$. One can check that the boundaries of these $20$ ribbons $\mathcal{R}_{S_{{I}_{k}}}, \mathcal{R}_{S_{{H}_{i}}}, \mathcal{R}_{S_{{V}_{j}}}$ meet along their $3$-dimensional common parts which are the $2$-dimensional thickenings of the common interval parts of the boundaries of $S_{{I}_{k}}, S_{{H}_{i}}, S_{{V}_{j}}$. (Recall the picture of $M_k$ in Figure \ref{Fig:Middle part} and \ref{Fig: Middle part 2}.) Hence, we construct the ribbon $\mathcal{R}_{M_k}$ of $M_k$ by gluing $20$ ribbons $\mathcal{R}_{S_{{I}_{k}}}, \mathcal{R}_{S_{{H}_{i}}}, \mathcal{R}_{S_{{V}_{j}}}$ along the common parts of their boundaries, i.e., $$\mathcal{R}_{M_k}=\mathcal{R}_{S_{{H}_{i}}} \cup \mathcal{R}_{S_{{I}_{K}}} \cup \mathcal{R}_{S_{{V}_{j}}}.$$ 
	By construction, $\mathcal{R}_{M_k}$ deformation retracts onto $M_k$, and the contact planes are tangent to $\mathcal{R}_{M_k}$ only along its Legendrian core $M_k$. Hence, $\mathcal{R}_{M_k}$ is the ribbon of $M_k$ in the contact geometric sense used by Giroux in \cite{Gi}. Moreover, observe that the ribbon $\mathcal{R}_{M_k}$ properly contains the $2$-dimensional ribbon $\mathcal{R}_{\gamma_k}$  of the Legendrian unknot $\gamma_k$ (the isotropic core of $M_k$)  which is defined by taking the union $\mathcal{R}_{I_k} \subset \mathcal{R}_{S_{{I}_{k}}}$ of the $4$ narrow bands defined by following the $2$-dimensional contact planes along the $4$ arcs $I_k$ in $(\mathbb{R}^3,\xi_0^3)$ and the following Legendrian planes:
	\begin{eqnarray*}
		\mathcal{R}_{{H}_{a_i}} &\doteq&\{z=-y_1+a_i, \quad 1-\epsilon  \leq x_1 \leq 1+\epsilon ,\quad x_2=y_2=0\} \subset \mathcal{R}_{S_{{H}_{i}}}\\
		\mathcal{R}_{{V}_{b_j}} &\doteq& \{z=y_1+b_j, \quad -1-\epsilon  \leq x_1 \leq -1+\epsilon ,\quad x_2=y_2=0\} \subset \mathcal{R}_{S_{{V}_{j}}}.
	\end{eqnarray*}	
	Next we construct the ribbons of the plates in the Legendrian disks $\mathcal{D}^{'}_k$, $\mathcal{D}^{''}_k$. For the triangular plates $T_{a_i}^{1,1}, T_{a_i}^{1,-1}, T_{b_j}^{-1,1}, T_{b_j}^{-1,-1}$, we define corresponding ribbons as 
	\begin{eqnarray*}
		\mathcal{R}_{T_{a_i}^{1,1}}&\doteq & \{z=-y_1-y_2+a_{i},\quad 1-\epsilon \leq x_1 \leq 1+ \epsilon,\quad 1-\delta \leq x_2 \leq 1+\delta\}\\
		\mathcal{R}_{T_{a_i}^{1,-1}}&\doteq & \{z=-y_1+y_2+a_{i},\quad 1-\epsilon \leq x_1 \leq 1+ \epsilon,\quad -1-\delta \leq x_2 \leq -1+\delta\}\\
		\mathcal{R}_{T_{b_j}^{-1,1}}&\doteq & \{z=y_1-y_2+b_{j},\quad -1-\epsilon \leq x_1 \leq -1+ \epsilon,\quad 1-\delta \leq x_2 \leq 1+\delta\}\\
		\mathcal{R}_{T_{b_j}^{-1,-1}}&\doteq & \{z=y_1+y_2+b_{j}, \quad -1-\epsilon \leq x_1 \leq -1+ \epsilon,\quad -1-\delta \leq x_2 \leq -1+\delta\}.
	\end{eqnarray*}
	for sufficiently small $\epsilon, \delta > 0$. The ribbons $\mathcal{R}_{T_{a_i}^{1,1}}, \mathcal{R}_{T_{a_i}^{1,-1}}, \mathcal{R}_{T_{b_j}^{-1,1}}, \mathcal{R}_{T_{b_j}^{-1,-1}}$ are disjoint and they deformation retract onto $T_{a_i}^{1,1}, T_{a_i}^{1,-1}, T_{b_j}^{-1,1}, T_{b_j}^{-1,-1}$, respectively.\\
	
	Now we consider $4$ other ribbon types $\mathcal{R}_{P_{\pm 1}}$, $\mathcal{R}_{P^{'}_{\pm 1}}$ whose cores are the Legendrian rectangular plates $P_{\pm 1}$, $P^{'}_{\pm 1}$ in $\mathcal{D}^{'}_k$ and $\mathcal{D}^{''}_k$. Their tangent spaces at a point $q$ are given by
	\begin{eqnarray*}
		T_q\mathcal{R}_{P_{\pm 1}}&=&\left\langle\frac{\partial}{\partial y_2}\pm \frac{\partial}{\partial z},\quad  \frac{\partial}{\partial x_1},\quad \frac{\partial}{\partial x_2},\quad \frac{\partial}{\partial z}-x_1\frac{\partial}{\partial y_1}\right\rangle\\
		T_q\mathcal{R}_{P^{'}_{\pm 1}}&=&\left\langle\frac{\partial}{\partial y_1}\pm \frac{\partial}{\partial y_2},\quad  \frac{\partial}{\partial x_1}- \frac{\partial}{\partial x_2},\quad \frac{\partial}{\partial z}-x_1\frac{\partial}{\partial y_1}, \quad \frac{\partial}{\partial z}-x_2\frac{\partial}{\partial y_2}\right\rangle.
	\end{eqnarray*}
	As the final ribbon types we consider $\mathcal{R}_{P_{t}}$ and $\mathcal{R}_{P_{b}}$ whose cores are the Legendrian top and bottom plates in $\mathcal{D}^{'}_k$ and $\mathcal{D}^{''}_k$. The tangent spaces of them at a point $q$ are given by
	$$T_q\mathcal{R}_{P_{t}}=T_q\mathcal{R}_{P_{b}}=\left\langle\frac{\partial}{\partial x_1},\quad \frac{\partial}{\partial x_2},\quad \frac{\partial}{\partial z}-x_1\frac{\partial}{\partial y_1},\quad \frac{\partial}{\partial z}-x_2\frac{\partial}{\partial y_2}\right\rangle.$$
	
	\noindent By taking appropriate $\epsilon$ and $\delta$, the ribbons $\mathcal{R}_{T_{a_i}^{1,1}}, \mathcal{R}_{T_{a_i}^{1,-1}}, \mathcal{R}_{T_{b_j}^{-1,1}}, \mathcal{R}_{T_{b_j}^{-1,-1}}, \mathcal{R}_{P_{\pm 1}}$, $\mathcal{R}_{P^{'}_{\pm 1}}, \mathcal{R}_{P_{t}},\mathcal{R}_{P_{b}}$ meet along their $3$-dimensional common boundary parts which are the $2$-dimensional thickenings of the common interval parts of the boundaries of their cores $T_{a_i}^{1,1}, T_{a_i}^{1,-1}, T_{b_j}^{-1,1}, T_{b_j}^{-1,-1}$, $P_{\pm 1}$, $P^{'}_{\pm 1}, P_t,P_b$. So, as before, gluing them along their common boundary regions gives two disjoint ribbons (both are topologically $D^4$) $\mathcal{R}_{\mathcal{D}^{'}_k}$ and $\mathcal{R}_{\mathcal{D}^{''}_k}$ whose cores are $\mathcal{D}^{'}_k$, $\mathcal{D}^{''}_k$, respectively. \\
	
	Finally, by taking the union of $\mathcal{R}_{M_k}$, $\mathcal{R}_{\mathcal{D}^{'}_k}$ and $\mathcal{R}_{\mathcal{D}^{''}_k}$ (along the thickening of the regions where $M_k, \mathcal{D}^{'}_k, \mathcal{D}^{''}_k$ meets), we construct the ribbon $$\mathcal{R}_{\mathcal{D}_k}=\mathcal{R}_{\mathcal{D}^{'}_k}\cup\mathcal{R}_{M_k}\cup\mathcal{R}_{\mathcal{D}^{''}_k}$$ whose core is the Legendrian diamond $\mathcal{D}_k$. By construction the $4$-dimensional ribbon $\mathcal{R}_{\mathcal{D}_k}$ properly contains the $2$-dimensional ribbon $R_{\gamma_k}$ which is diffeomorphic to $D^{*}\mathbb{S}^1$. Indeed,
	
	\begin{lemma} \label{lem:ribbon_is_cotangent_bdle}
		After rounding the corners, the ribbon $\mathcal{R}_{\mathcal{D}_k}$ is diffeomorphic to $D^{*}\mathbb{S}^2$.
	\end{lemma}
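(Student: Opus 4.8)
The plan is to exploit the general principle that the Giroux ribbon of a \emph{smooth} Legendrian submanifold $\Lambda$ in a contact manifold is diffeomorphic to its disk cotangent bundle $D^*\Lambda$, and to reduce the piecewise smooth object $\mathcal{R}_{\mathcal{D}_k}$ to this situation. Concretely, I would proceed in three steps: first identify the smooth topological type of the core $\mathcal{D}_k$ after rounding; then recognize $\mathcal{R}_{\mathcal{D}_k}$ as a $D^2$-bundle over $\mathcal{D}_k$; and finally determine which bundle it is.

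For the first step, recall from the decomposition $\mathcal{D}_k=\mathcal{D}^{'}_k\cup M_k\cup \mathcal{D}^{''}_k$ that the upper and lower pieces $\mathcal{D}^{'}_k,\mathcal{D}^{''}_k$ are (piecewise smooth) $2$-disks, while the middle part $M_k$ is an annulus deformation retracting onto the isotropic unknot $\gamma_k\cong\mathbb{S}^1$. Gluing two disks along a separating annulus yields a $2$-sphere, so $\mathcal{D}_k\cong\mathbb{S}^2$; after rounding corners in a small neighborhood (exactly as in the proof of Proposition \ref{Prop:reconstruction}) we obtain a genuine smooth Legendrian $2$-unknot $\widetilde{\mathcal{D}}_k$ in $(\mathbb{R}^5,\xi_0^5)$.

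For the remaining steps, note that by construction $\mathcal{R}_{\mathcal{D}_k}$ is obtained from the core by thickening $\epsilon$-far along the contact planes $\xi_0^5$, and that these planes are tangent to $\mathcal{R}_{\mathcal{D}_k}$ only along $\mathcal{D}_k$ (this was checked plate-by-plate for $\mathcal{R}_{M_k}$ and for the triangular and rectangular ribbons constituting $\mathcal{R}_{\mathcal{D}^{'}_k},\mathcal{R}_{\mathcal{D}^{''}_k}$). Hence, after rounding, $\mathcal{R}_{\widetilde{\mathcal{D}}_k}$ is precisely the contact ribbon of the Legendrian sphere $\widetilde{\mathcal{D}}_k$ in the sense of Giroux. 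Since $T\widetilde{\mathcal{D}}_k$ is a Lagrangian subbundle of the symplectic bundle $(\xi_0^5,d\alpha_0^5)$ along $\widetilde{\mathcal{D}}_k$, the conformal symplectic form identifies the normal bundle of $\widetilde{\mathcal{D}}_k$ inside $\xi_0^5$ with $T^*\widetilde{\mathcal{D}}_k$; consequently the $\epsilon$-disk bundle in these normal directions is $D^*\widetilde{\mathcal{D}}_k\cong D^*\mathbb{S}^2$, which is the claim.

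I expect the main obstacle to be the bookkeeping required to pass rigorously from the finitely many plate ribbons to a single smooth disk bundle: one must verify that the corner rounding of the core $\mathcal{D}_k$ extends to a corner rounding of $\mathcal{R}_{\mathcal{D}_k}$ preserving both the fiberwise disk structure and the property of being tangent to $\xi_0^5$ exactly along the core, so that the Giroux identification applies globally rather than merely on individual plates. Equivalently, if one prefers an explicit computation avoiding the neighborhood theorem, the difficulty migrates to the clutching function of the $D^2$-bundle along the equatorial circle $\gamma_k$: writing $\mathcal{R}_{\mathcal{D}_k}=\mathcal{R}_{\mathcal{D}^{'}_k}\cup_{\mathcal{R}_{M_k}}\mathcal{R}_{\mathcal{D}^{''}_k}$ with $\mathcal{R}_{\mathcal{D}^{'}_k}\cong\mathcal{R}_{\mathcal{D}^{''}_k}\cong D^4$, one must show that the rotation of the contact-normal $2$-planes around $\gamma_k$ has degree $\pm 2$, thereby pinning the Euler number of the bundle to $\pm 2$ and identifying it with $D^*\mathbb{S}^2$ rather than with some other $D^2$-bundle over $\mathbb{S}^2$.
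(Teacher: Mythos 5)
Your argument is correct, but it follows a genuinely different route from the one in the paper. You invoke the general Giroux principle: after rounding, $\mathcal{D}_k$ is a smooth Legendrian $2$-sphere, the ribbon is tangent to $\xi_0^5$ exactly along the core, and the map $v\mapsto d\alpha_0^5(v,\cdot)|_{T\mathcal{D}_k}$ identifies the normal bundle of the Lagrangian subbundle $T\mathcal{D}_k\subset(\xi_0^5,d\alpha_0^5)$ with $T^*\mathcal{D}_k$, so the ribbon is $D^*\mathbb{S}^2$. The paper instead argues by explicit decomposition: it records, plate by plate, the tangent and thickening directions of $M_k$ and of $\gamma_k$, observes that the $2$-dimensional ribbon $\mathcal{R}_{\gamma_k}$ is a positive Hopf band (hence $D^*\mathbb{S}^1$ with the correct framing inside $\mathcal{R}_{M_k}$, the thickening directions being the cofiber direction of $\mathcal{R}_{\gamma_k}$ and the ``manifold'' direction of $M_k$), and then caps $\mathcal{R}_{M_k}$ off with $\mathcal{R}_{\mathcal{D}'_k}$ and $\mathcal{R}_{\mathcal{D}''_k}$, each topologically $D^*D^2$, to obtain $D^*\mathbb{S}^2$. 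The two issues you flag as the ``main obstacles'' are exactly where the approaches diverge: the paper resolves the clutching/Euler-number question concretely through the positive Hopf band observation (which is also the information needed later for the relative open book structure $\mathcal{R}_{\gamma_k}\subset\mathcal{R}_{\mathcal{D}_k}$), whereas you resolve it abstractly via the symplectic normal bundle identification. Your symplectic identification is essentially the content of Remark \ref{rem:conformally_equivalent}, which the paper states separately after the lemma; so your proof buys a shorter, more conceptual derivation that simultaneously yields the symplectic statement, at the cost of having to verify globally (not just plate by plate) that the rounded ribbon really is the $\epsilon$-disk bundle in the contact-normal directions --- a verification the paper's explicit coordinate bookkeeping supplies for free.
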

	
	\begin{proof}
		First, we observe that the tangent bundle of the rectangular plates $S_{{H}_{i}}$ and $S_{{V}_{j}}$ in the middle part $M_k$ are generated by $\frac{\partial}{\partial y_1}\pm \frac{\partial}{\partial z}$ and $\frac{\partial}{\partial x_2}$. The ribbons $\mathcal{R}_{S_{{H}_{i}}}$ and $\mathcal{R}_{S_{{V}_{j}}}$ are, respectively, obtained by thickening of $S_{{H}_{i}}$ and $S_{{V}_{j}}$  along the vectors $\frac{\partial}{\partial x_1}$ and $\frac{\partial}{\partial y_2}-x_2\frac{\partial}{\partial z}$. Also, we note that the tangent bundle of the triangular plates $S_{{I}_{k}}$ in $M_k$ is generated by  $\frac{\partial}{\partial x_1}$ and $\frac{\partial}{\partial x_2}$, and the ribbon $\mathcal{R}_{S_{{I}_{k}}}$ is the thickening of ${I}_{k}$ along the two vectors $\frac{\partial}{\partial y_1}-x_1\frac{\partial}{\partial z}$ and $\frac{\partial}{\partial y_2}-x_2\frac{\partial}{\partial z}$.\\
		
		Now, consider the isotropic unknot $\gamma_k$ in the middle part $M_k$. The tangent bundle of ${H}_{i}$ and ${V}_{j}$ in $\gamma_k$ are generated by  $\frac{\partial}{\partial y_1}\pm \frac{\partial}{\partial z}$. The $2$-dimesnsional ribbon $\mathcal{R}_{{H}_{i}}$ and $\mathcal{R}_{{V}_{j}}$ are obtained by thickening of ${H}_{i}$ and ${V}_{j}$ in the direction of $\frac{\partial}{\partial x_1}$. Also, the tangent bundle of ${I}_{k}$ is generated by $\frac{\partial}{\partial x_1}$. The $2$-dimesnsional ribbon $\mathcal{R}_{{I}_{k}}$ is the thickening of ${I}_{k}$ along $\frac{\partial}{\partial y_1}-x_1\frac{\partial}{\partial z}$. One can observe that $\mathcal{R}_{\gamma_k}$ topologically   positive Hopf band. So, the $4$-dimensional ribbon $\mathcal{R}_{M_k}$  contains  the $2$-dimensional ribbon $\mathcal{R}_{\gamma_k}$ which is topologically $D^{*}\mathbb{S}^1$, and the thickening directions of $\mathcal{R}_{\gamma_k}$ are  the cofiber direction of $\mathcal{R}_{\gamma_k}$ and the manifold direction of $\mathcal{R}_{M_k}$. Since the ribbon $\mathcal{R}_{\mathcal{D}_k}$ is  obtained by attaching two ribbons $\mathcal{R}_{\mathcal{D}^{'}_k}$ and $\mathcal{R}_{\mathcal{D}^{''}_k}$, which are topologically $D^{*}D^2$, to $\mathcal{R}_{M_k}$, $\mathcal{R}_{\mathcal{D}_k}$ is topologically $D^{*}\mathbb{S}^2$.
	\end{proof}
	
	\begin{remark} \label{rem:conformally_equivalent}
		In fact, we can say more here: By straigthforward computations one can check that $d\alpha_0^5|_{\mathcal{D}_k}$ is non-degenerate on the tangent space $T_q\mathcal{R}_{\mathcal{D}_k}$ for all $q\in \mathcal{R}_{\mathcal{D}_k}$, and so the pair $(\mathcal{R}_{\mathcal{D}_k},d\alpha_0^5|_{\mathcal{D}_k})$ is a symplectic manifolds. Indeed, one can realize it as the unit disk cotangent bundle of $\mathcal{D}_k\cong\mathbb{S}^2$. More precisely, by construction we know that $\mathcal{R}_{\gamma_k}$ is a positive Hopf band equipped with the symplectic structure $d\alpha_0^3|_{\gamma_k}=d\alpha_0^5|_{\gamma_k}$,  and so $(\mathcal{R}_{\gamma_k},d\alpha_0^3|_{\mathcal{D}_k})$ is the unit disk cotangent bundle of $\gamma_k\cong\mathbb{S}^1$ which is symplectically and properly embedded in $(\mathcal{R}_{\mathcal{D}_k},d\alpha_0^5|_{\mathcal{D}_k})$. Also by construction the zero section $\gamma_k$ is properly embedded in $\mathcal{D}_k$, and the contraction deforming $\mathcal{R}_{\mathcal{D}_k}$ onto its core $\mathcal{D}_k$ is, simultanously, deforming $\mathcal{R}_{\gamma_k}$ onto the core $\gamma_k$. Hence, $(\mathcal{R}_{\mathcal{D}_k},d\alpha_0^5|_{\mathcal{D}_k})$ must be the unit disk cotangent bundle of the $2$-sphere $\mathcal{D}_k$.
	\end{remark}	
	
	By Lemma \ref{lem:ribbon_is_cotangent_bdle} and Remark \ref{rem:conformally_equivalent}, we conclude that, for each Legendrian diamond $\mathcal{D}_k$, one can obtain a compatible open book $(\mathcal{R}_{\mathcal{D}_k}, \tau_{\mathcal{D}_k})\cong(D^{*}\mathbb{S}^2, \tau_{\mathbb{S}^2})$ on $\mathbb{S}^5_{st}$ where we identify (after rounding corners) $\mathcal{D}_k$ with the zero section $\mathbb{S}^2$ in $D^{*}\mathbb{S}^2$, and $\tau_{\mathcal{D}_k}$ denotes the right-handed Dehn twist along $\mathcal{D}_k$. Indeed, simultaneously, we also obtain a compatible open book $(\mathcal{R}_{\gamma_k}, \tau_{\gamma_k})\cong(D^{*}\mathbb{S}^1, \tau_{\mathbb{S}^1})$ on $\mathbb{S}^3_{st}$ where we identify (after rounding corners) $\gamma_k$ with the zero section $\mathbb{S}^1$ in $D^{*}\mathbb{S}^1$, and $\tau_{\gamma_k}$ denotes the right-handed Dehn twist along $\gamma_k$ (which is the equatorial circle of $\mathcal{D}_k$). Thus, for each Legendrian diamond $\mathcal{D}_k$ we have, indeed, a compatible relative open book on the (admissible) relative contact pair $(\mathbb{S}^5_{st},\mathbb{S}^3_{st})$ given by $$(\mathcal{R}_{\mathcal{D}_k}, \mathcal{R}_{\gamma_k}, \tau_{\mathcal{D}_k}, \tau_{\gamma_k})\cong(D^*\mathbb{S}^2,D^*\mathbb{S}^1,\tau_{\mathbb{S}^2},\tau_{\mathbb{S}^1}).$$
	
	Next, starting from the square bridge diagram $\Delta(\mathbb{L})$, we will construct a compatible relative open book on  $(\mathbb{S}^5_{st},\mathbb{S}^3_{st})$:
	
	\begin{proposition} \label{prop:Construction_via_stabilizations}
		For a given admissible Legendrian $2$-link $\mathbb{L}$ with its equatorial $1$-link $\mathbb{K}$, suppose that there are $m$ square regions $R_1, R_2, ..., R_m$ in the square bridge diagram $\Delta(\mathbb{K})$, and for each $k\in\{1,2, ..., m\}$ let $\mathcal{D}_k$ be a smooth (rounded) Legendrian $2$-unknot with its smooth (rounded) isotropic equatorial $1$-unknot $\gamma_k$ constructed as above. Then there exists a compatible relative open book $(F_{\mathbb{S}^5},F_{\mathbb{S}^3},h_{\mathbb{S}^5},h_{\mathbb{S}^3})$ on the (admissible) relative contact pair $(\mathbb{S}^5_{st},\mathbb{S}^3_{st})$ such that the pages $F_{\mathbb{S}^5},F_{\mathbb{S}^3}$ are Weinstein, and the following are satisfied:
		\begin{enumerate} 
			\item[(i)] $F_{\mathbb{S}^3}$ is symplectically and properly embedded in $F_{\mathbb{S}^5}$. 
			\item[(ii)] The monodromies of $(F_{\mathbb{S}^5},h_{\mathbb{S}^5})$, $(F_{\mathbb{S}^3},h_{\mathbb{S}^3})$ are, respectively, given by $$h_{\mathbb{S}^5}=\tau_{\mathcal{D}_1}\circ \tau_{\mathcal{D}_2}\circ \cdots \circ \tau_{\mathcal{D}_m}, \quad h_{\mathbb{S}^3}=\tau_{\gamma_1}\circ \tau_{\gamma_2}\circ \cdots \circ \tau_{\gamma_m}.$$
		\end{enumerate} 
	\end{proposition}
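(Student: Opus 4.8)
The plan is to build the relative open book by induction on the number of square regions, using the compatible relative open book $(\mathcal{R}_{\mathcal{D}_k},\mathcal{R}_{\gamma_k},\tau_{\mathcal{D}_k},\tau_{\gamma_k})\cong(D^*\mathbb{S}^2,D^*\mathbb{S}^1,\tau_{\mathbb{S}^2},\tau_{\mathbb{S}^1})$ associated to each diamond as the elementary building block. The base case is a single diamond, where the claim is precisely the content of Lemma~\ref{lem:ribbon_is_cotangent_bdle}, Remark~\ref{rem:conformally_equivalent}, and Example~\ref{ex:trivial_open_book}: for $m=1$ we take $(F_{\mathbb{S}^5},F_{\mathbb{S}^3},h_{\mathbb{S}^5},h_{\mathbb{S}^3})=(\mathcal{R}_{\mathcal{D}_1},\mathcal{R}_{\gamma_1},\tau_{\mathcal{D}_1},\tau_{\gamma_1})$, which by Example~\ref{ex:basic_stabilization} is itself a positive relative stabilization of the trivial relative open book $(D^4,D^2,\mathrm{id}_{D^4},\mathrm{id}_{D^2})$ on $(\mathbb{S}^5_{st},\mathbb{S}^3_{st})$. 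This identifies $F_{\mathbb{S}^3}=\mathcal{R}_{\gamma_1}$ as symplectically and properly embedded in $F_{\mathbb{S}^5}=\mathcal{R}_{\mathcal{D}_1}$ and gives the monodromies as the single Dehn twists, establishing properties (i) and (ii).

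The inductive step is where the work lies. Assuming a compatible relative open book $(F^{(k)}_{\mathbb{S}^5},F^{(k)}_{\mathbb{S}^3},h^{(k)}_{\mathbb{S}^5},h^{(k)}_{\mathbb{S}^3})$ on $(\mathbb{S}^5_{st},\mathbb{S}^3_{st})$ has been constructed whose page is (after rounding) the union $\bigcup_{j\le k}\mathcal{R}_{\mathcal{D}_j}$ of the first $k$ ribbons and whose monodromies are $\tau_{\mathcal{D}_1}\circ\cdots\circ\tau_{\mathcal{D}_k}$ and $\tau_{\gamma_1}\circ\cdots\circ\tau_{\gamma_k}$, I would adjoin the next diamond $\mathcal{D}_{k+1}$ by a single positive relative stabilization. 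The geometric input is the remark in the text that consecutive diamonds are attached either ``sidewise'' or by ``opening a back gate,'' so that $\mathcal{D}_{k+1}$ shares a common rectangular plate with the previous configuration. I would exhibit a properly embedded Lagrangian $2$-disk $A\subset F^{(k)}_{\mathbb{S}^5}$ with Legendrian unknotted boundary in the binding, such that $A\cap F^{(k)}_{\mathbb{S}^3}$ is a properly embedded Lagrangian $1$-disk, and whose cocore glues to the core of the handle carrying $\mathcal{R}_{\mathcal{D}_{k+1}}$; completing $A$ by the core disk $D^2\times\{0\}$ of the Weinstein handle $H^4$ yields the Lagrangian $2$-sphere $L=A\cup_{\partial A}(D^2\times\{0\})$ which (after rounding) is Legendrian isotopic to $\mathcal{D}_{k+1}$, with equatorial circle $\gamma_{k+1}$ playing the analogous role of $K=(A\cap F^{(k)}_{\mathbb{S}^3})\cup(D^1\times\{0\})$. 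Definition~\ref{def:Stab_of_Rel_Open_Book} with the framing chosen one less than the contact framing then produces $F^{(k+1)}_{\mathbb{S}^5}=F^{(k)}_{\mathbb{S}^5}\cup H^4$, $F^{(k+1)}_{\mathbb{S}^3}=F^{(k)}_{\mathbb{S}^3}\cup H^2$, and appends exactly the Dehn twists $\tau_L\simeq\tau_{\mathcal{D}_{k+1}}$ and $\tau_K\simeq\tau_{\gamma_{k+1}}$ to the respective monodromies. Theorem~\ref{thm:Relative_Stab_Compatible_Open Book} guarantees that the result is again a compatible relative open book on $(\mathbb{S}^5_{st},\mathbb{S}^3_{st})$, so compatibility, the proper symplectic embedding in (i), and the correct monodromy in (ii) all propagate through the induction.

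The main obstacle I expect is verifying that the Lagrangian disk $A$ at each step can genuinely be positioned so that both its boundary $\partial A$ is the correct Legendrian attaching sphere for gluing on $\mathcal{R}_{\mathcal{D}_{k+1}}$ \emph{and} the induced $1$-disk $A\cap F^{(k)}_{\mathbb{S}^3}$ simultaneously realizes the attachment in the three-dimensional slice; that is, confronting the matching of the two stabilizations into the single relative stabilization of Definition~\ref{def:Stab_of_Rel_Open_Book}. Concretely one must check that the cofiber/fiber directions of the ribbon $\mathcal{R}_{\mathcal{D}_{k+1}}$, as computed via the tangent-space descriptions $T_q\mathcal{R}_{P_{\pm1}}$, $T_q\mathcal{R}_{P'_{\pm1}}$, $T_q\mathcal{R}_{P_t}$, $T_q\mathcal{R}_{P_b}$ in the proof above, align with the core/cocore of the relative handle $H=(D^2\times D^2,D^1\times D^1)$, so that the embedded framing equals the contact framing and the stabilization is positive. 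Once this local model is pinned down for the two attaching types, the rest is a bookkeeping induction that terminates after $m$ steps, yielding $(F_{\mathbb{S}^5},F_{\mathbb{S}^3},h_{\mathbb{S}^5},h_{\mathbb{S}^3})=(F^{(m)}_{\mathbb{S}^5},F^{(m)}_{\mathbb{S}^3},h^{(m)}_{\mathbb{S}^5},h^{(m)}_{\mathbb{S}^3})$ with precisely the monodromies asserted in the statement.
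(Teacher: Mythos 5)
Your proposal is correct and follows essentially the same route as the paper: an induction starting from the trivial relative open book $(D^4,D^2,\mathrm{id}_{D^4},\mathrm{id}_{D^2})$, adjoining one diamond per step by a positive relative stabilization along a Lagrangian $2$-disk $A_{k+1}$ supported in the overlap region of consecutive middle parts, with Theorem~\ref{thm:Relative_Stab_Compatible_Open Book} propagating compatibility. The technical point you flag --- that $A_{k+1}$ and $A_{k+1}\cap F^{(k)}_{\mathbb{S}^3}$ must simultaneously realize the attaching data in both dimensions --- is exactly what the paper resolves by taking $A_{k+1}=((\bigsqcup_{i\le k}M_i)\cap M_{k+1})\cup_{S^1\times\{0\}}(S^1\times[0,1])$ inside the ribbon of the common middle-part region.
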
 
	
	\begin{proof}
		We will introduce $R_k$'s to the square bridge diagram $\Delta(\mathbb{K})$ (and so the corresponding octahedrons to the generalized square bridge diagram $\Delta(\mathbb{L})$) one by one (in the order of their indices), and each step we will obtain a new compatible relative open book on $(\mathbb{S}^5_{st},\mathbb{S}^3_{st})$ as the result of a positive relative stabilization of the compatible relative open book constructed in the previous step. As an initial step, if no squares is introduced yet, then we consider the trivial compatible relative open book  $\mathcal{OB}_0\doteq(D^4, D^2, id_{D^4}, id_{D^2})$ on $(\mathbb{S}^5_{st},\mathbb{S}^3_{st})$ given in Example \ref{ex:trivial_open_book}. When the first square $R_1$ is introduced, from Example \ref{ex:basic_stabilization} we obtain 
		$$\mathcal{OB}_1\doteq(\mathcal{R}_{\mathcal{D}_1}, \mathcal{R}_{\gamma_1}, \tau_{\mathcal{D}_1}, \tau_{\gamma_1})=\mathcal{S}^+[(D^4, D^2, id_{D^4}, id_{D^2});A_1, A_1\cap D^2]$$ where $A_1$ is a properly embedded Lagrangian $2$-disk in $D^4$ with a Legendrian boundary in $\partial D^4$ as described in the example. We note that $\mathcal{R}_{\mathcal{D}_1}=D^4 \cup \Sigma_1$ and $\mathcal{R}_{\gamma_1}=D^2 \cup \sigma_1$ where $\Sigma_1$ and $\sigma_1$ are the corresponding Weinstein handles of the stabilizations mentioned in Remark \ref{rem:alternative_def_of_stabilization}.
		
		\noindent Now introducing $R_2$ yields
		$$\mathcal{OB}_2\doteq(\mathcal{R}_{\mathcal{D}_1}\cup \mathcal{R}_{\mathcal{D}_2}, \mathcal{R}_{\gamma_1}\cup \mathcal{R}_{\gamma_2}, \tau_{\mathcal{D}_1}\circ\tau_{\mathcal{D}_2}, \tau_{\gamma_1}\circ\tau_{\gamma_2})\cong\mathcal{S}^+[(\mathcal{R}_{\mathcal{D}_1}, \mathcal{R}_{\gamma_1}, \tau_{\mathcal{D}_1}, \tau_{\gamma_1});A_2, A_2\cap \mathcal{R}_{\gamma_1}].$$ Here $\mathcal{R}_{\mathcal{D}_1}\cup \mathcal{R}_{\mathcal{D}_2}=\mathcal{R}_{\mathcal{D}_1}\cup \Sigma_2$ and $\mathcal{R}_{\gamma_1}\cup \mathcal{R}_{\gamma_2}=\mathcal{R}_{\gamma_1}\cup \sigma_2$ where $$\Sigma_2=\mathcal{R}_{\mathcal{D}_2}\setminus \mathcal{R}_{M_1\cap M_2},\quad \sigma_2=\mathcal{R}_{\gamma_2}\setminus \mathcal{R}_{\gamma_1\cap \gamma_2}$$ are the corresponding Weinstein handles of the stabilizations so that the properly embedded Lagrangian $2$-disk $A_2$ in the ribbon $\mathcal{R}_{M_1 \cap M_2}$ obtained by gluing (smoothy) the standard Lagrangian cylinder $S^1\times [0,1]= \mathcal{R}_{M_1 \cap M_2}\cap \mathcal{D}_2$ to the Lagrangian disk $M_1 \cap M_2$ so that $S^1 \times \{0\}=\partial(M_1 \cap M_2)$ and $S^1\times \{1\}=\partial A_2$ is the attaching cirle of $\Sigma_2$ (and so $\sigma_2$ is attached along the pair of points $\partial (A_2\cap \mathcal{R}_{M_1\cap M_2})=\partial A_2\cap \partial \mathcal{R}_{M_1\cap M_2}$). This will attach the (new) diamond $\mathcal{D}_{2}$ to the existent core $\mathcal{D}_{1}$, and they together define the core $\mathcal{D}_{1}\cup \mathcal{D}_{2}$ of the new page (see Figure \ref{Fig: attaching}).\\
		
		We remark that taking the union of the Lagrangian core disk (say $C_2$) of $\Sigma_2$ and $A_2=(M_1 \cap M_2) \cup_{S^1 \times \{0\}}S^1\times [0,1]$ gives the Lagrangian $2$-sphere $$\mathcal{D}_2=(M_1 \cap M_2) \cup_{S^1 \times \{0\}}S^1\times [0,1]\cup_{S^1 \times \{1\}} C_2$$ in the new symplectic (indeed, Weinstein) page $\mathcal{R}_{\mathcal{D}_1}\cup \mathcal{R}_{\mathcal{D}_2} \subset \mathbb{S}^5_{st}$ along which a new right-handed Dehn twist is adjoined to the previous monodromy. In particular, $$\mathcal{D}_2\cap \mathbb{S}^3_{st}=[(M_1 \cap M_2) \cup_{S^1 \times \{0\}}S^1\times [0,1]\cup_{S^1 \times \{1\}} C_2]\cap \mathbb{S}^3_{st}=\gamma_2$$ is the Lagrangian $1$-sphere in the new symplectic (Weinstein) page $\mathcal{R}_{\gamma_1}\cup \mathcal{R}_{\gamma_2} \subset \mathbb{S}^3_{st}$ along which a new right-handed Dehn twist is adjoined to the previous monodromy.\\
		
		\begin{figure}[H]
			\begin{center}
				\includegraphics[width=0.6\textwidth]{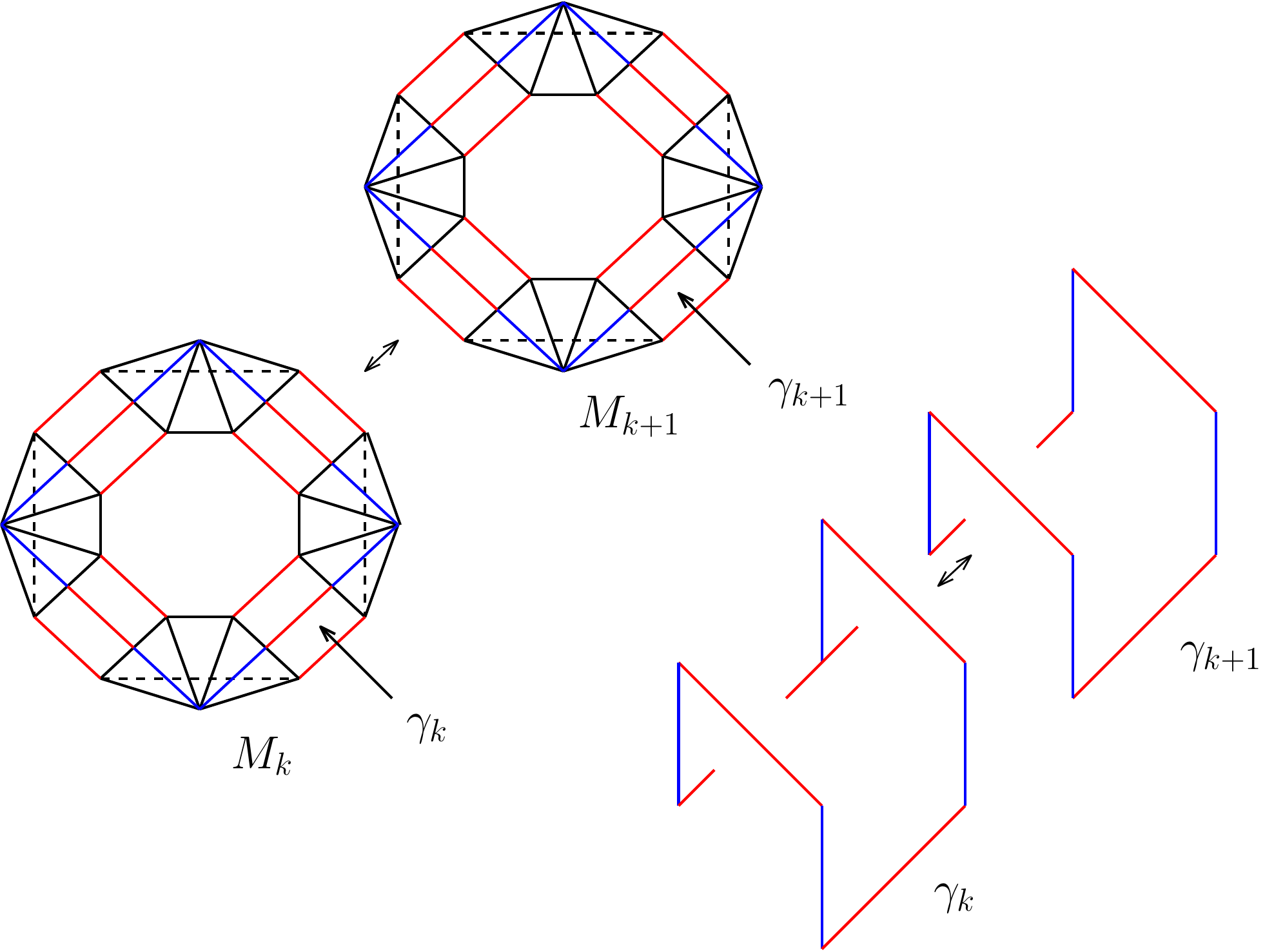}
				\caption{An attaching a new diamond $\mathcal{D}_{k+1}$ to an existent $\mathcal{D}_k$.}
				\label{Fig: attaching}
			\end{center}
		\end{figure}
		
		Iteratively we continue as follows: Assuming $R_k$ has been introduced, and so the corresponding open book $\mathcal{OB}_k\doteq(\bigsqcup_{i=1}^{k} \mathcal{R}_{\mathcal{D}_i}, \bigsqcup_{i=1}^{k}\mathcal{R}_{\gamma_i}, \tau_{\mathcal{D}_1}\circ\cdots \circ\tau_{\mathcal{D}_k}, \tau_{\gamma_1}\circ\cdots \circ\tau_{\gamma_k})$ has been constructed, introducing $R_{k+1}$ yields the compatible relative open book
		
		\begin{eqnarray*}
			\mathcal{OB}_{k+1}&\doteq &((\bigsqcup_{i=1}^{k} \mathcal{R}_{\mathcal{D}_i}) \cup\mathcal{R}_{\mathcal{D}_{k+1}}, (\bigsqcup_{i=1}^{k} \mathcal{R}_{\gamma_i})\cup\mathcal{R}_{\gamma_{k+1}}, (\tau_{\mathcal{D}_1}\circ \cdots \circ\tau_{\mathcal{D}_k})\circ\tau_{\mathcal{D}_{k+1}}, (\tau_{\gamma_1}\circ\cdots \circ\tau_{\gamma_k})\circ\tau_{\gamma_{k+1}})\\
			&=&\mathcal{S}^+[(\bigsqcup_{i=1}^{k} \mathcal{R}_{\mathcal{D}_i}, \bigsqcup_{i=1}^{k} \mathcal{R}_{\gamma_i}, \tau_{\mathcal{D}_1}\circ\cdots \circ\tau_{\mathcal{D}_k}, \tau_{\gamma_1}\circ\cdots \circ\tau_{\gamma_k});A_{k+1}, A_{k+1}\cap \bigsqcup_{i=1}^{k} \mathcal{R}_{\gamma_i}]
		\end{eqnarray*}
		
		As before we have $$(\bigsqcup_{i=1}^{k} \mathcal{R}_{\mathcal{D}_i}) \cup\mathcal{R}_{\mathcal{D}_{k+1}}=(\bigsqcup_{i=1}^{k} \mathcal{R}_{\mathcal{D}_i})\cup \Sigma_{k+1}, \quad (\bigsqcup_{i=1}^{k} \mathcal{R}_{\gamma_i})\cup\mathcal{R}_{\gamma_{k+1}}=(\bigsqcup_{i=1}^{k} \mathcal{R}_{\gamma_i})\cup \sigma_{k+1}$$ where $$\Sigma_{k+1}=\mathcal{R}_{\mathcal{D}_{k+1}}\setminus \mathcal{R}_{(\bigsqcup_{i=1}^{k}M_i)\cap M_{k+1}}, \quad \sigma_{k+1}=\mathcal{R}_{\gamma_{k+1}}\setminus \mathcal{R}_{(\bigsqcup_{i=1}^{k}\gamma_i)\cap \gamma_{k+1}}$$ are the corresponding Weinstein handles of the stabilizations so that the properly embedded Lagrangian $2$-disk $A_{k+1}$ in the ribbon $\mathcal{R}_{(\bigsqcup_{i=1}^{k}M_i)\cap M_{k+1}}$ obtained by gluing (smoothy) the standard Lagrangian cylinder $S^1\times [0,1]= \mathcal{R}_{(\bigsqcup_{i=1}^{k}M_i)\cap M_{k+1}}\cap \mathcal{D}_{k+1}$ to the Lagrangian disk $(\bigsqcup_{i=1}^{k}M_i) \cap M_{k+1}$ so that $S^1 \times \{0\}=\partial((\bigsqcup_{i=1}^{k}M_i)\cap M_{k+1})$ and $S^1\times \{1\}=\partial A_{k+1}$ is the attaching cirle of $\Sigma_{k+1}$, and so $\sigma_{k+1}$ is attached along the pair of points $$\partial (A_{k+1}\cap (\bigsqcup_{i=1}^{k}M_i)\cap M_{k+1})=\partial A_2\cap \partial \mathcal{R}_{(\bigsqcup_{i=1}^{k}M_i)\cap M_{k+1}}.$$ This will attach the (new) diamond $\mathcal{D}_{k+1}$ to the existent core $\bigsqcup_{i=1}^{k}\mathcal{D}_{i}$, and they together define the core $(\bigsqcup_{i=1}^{k}\mathcal{D}_{i})\cup \mathcal{D}_{k+1}$ of the new page (see Figure \ref{Fig: attachmiddle} where we take $m=4$).
		
		\begin{figure}[H]
			\begin{center}
				\includegraphics[width=0.32\textwidth]{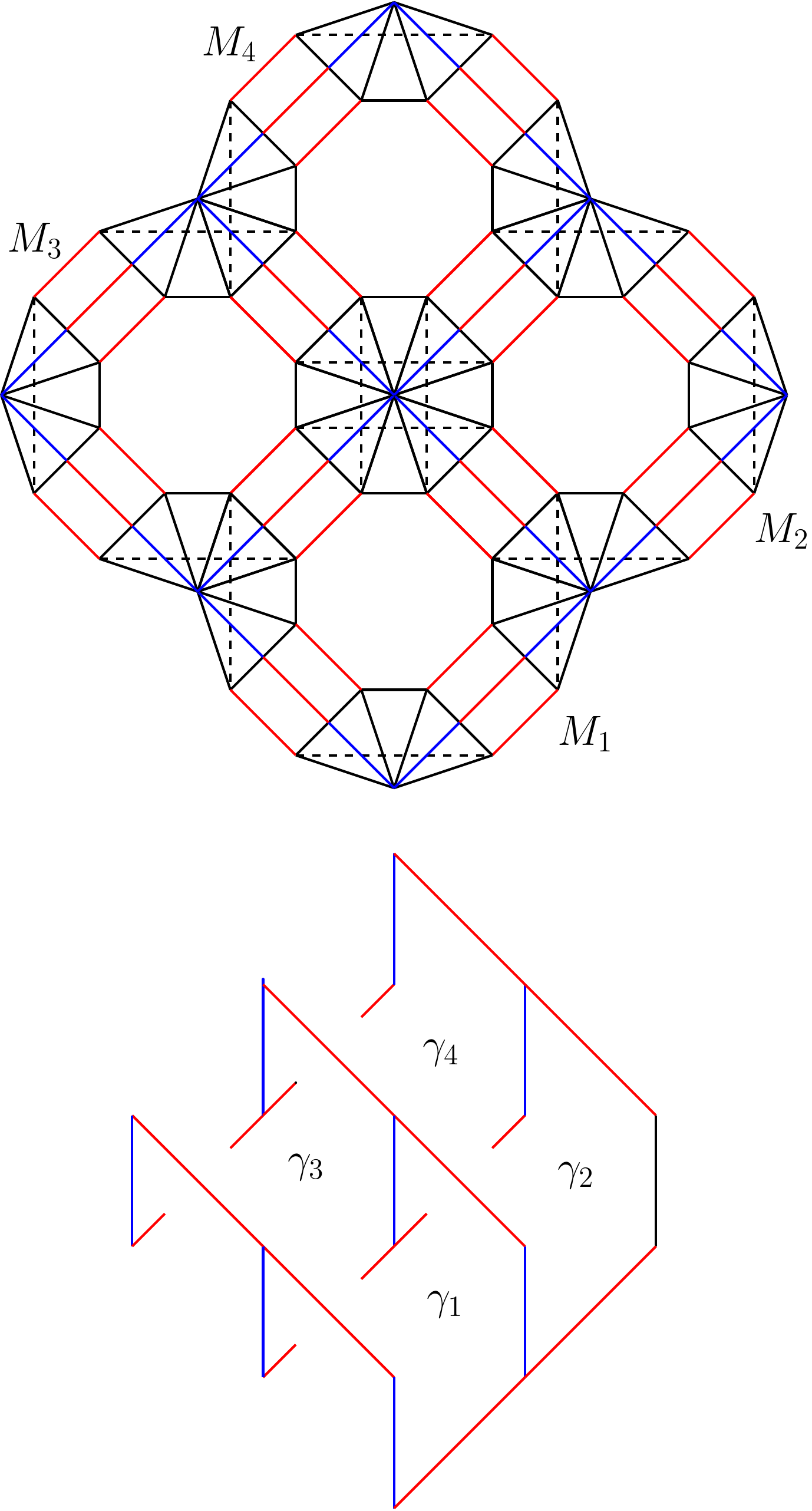}
				\caption{Attaching process along the middle parts $M_i$'s of $\mathcal{D}_{i}$'s.}
				\label{Fig: attachmiddle}
			\end{center}
		\end{figure}
		
		Similar to $k=2$ case, taking the union of the Lagrangian core disk $C_{k+1}$ of $\Sigma_{k+1}$ and $A_{k+1}=((\bigsqcup_{i=1}^{k}M_i)\cap M_{k+1}) \cup_{S^1 \times \{0\}}(S^1\times [0,1])$ gives the Lagrangian $2$-sphere $$\mathcal{D}_{k+1}=((\bigsqcup_{i=1}^{k}M_i)\cap M_{k+1}) \cup_{S^1 \times \{0\}}(S^1\times [0,1])\cup_{S^1 \times \{1\}} C_{k+1}$$ in the new symplectic (Weinstein) page $(\bigsqcup_{i=1}^{k} \mathcal{R}_{\mathcal{D}_i}) \cup\mathcal{R}_{\mathcal{D}_{k+1}} \subset \mathbb{S}^5_{st}$ along which a new right-handed Dehn twist is adjoined to the previous monodromy. In particular, $$\mathcal{D}_{k+1}\cap \mathbb{S}^3_{st}=[((\bigsqcup_{i=1}^{k}M_i)\cap M_{k+1}) \cup_{S^1 \times \{0\}}(S^1\times [0,1])\cup_{S^1 \times \{1\}} C_{k+1}]\cap \mathbb{S}^3_{st}=\gamma_{k+1}$$ is the Lagrangian $1$-sphere in the new symplectic (Weinstein) page $(\bigsqcup_{i=1}^{k} \mathcal{R}_{\gamma_i})\cup\mathcal{R}_{\gamma_{k+1}} \subset \mathbb{S}^3_{st}$ along which a new right-handed Dehn twist is adjoined to the previous monodromy.\\
		
		Hence, by taking $k=m$, i.e., after introducing all $R_k$'s, we obtain a relative open book
		
		\begin{eqnarray*}
			(F_{\mathbb{S}^5},F_{\mathbb{S}^3},h_{\mathbb{S}^5},h_{\mathbb{S}^3})&\doteq &\mathcal{OB}_{m}\\
			&=& \mathcal{S}^+[\cdots [\mathcal{S}^+[(D^4, D^2, id_{D^4}, id_{D^2});A_1, A_1\cap D^2];\cdots ];A_m, A_m\cap \bigsqcup_{i=1}^{m-1} \mathcal{R}_{\gamma_i}]
		\end{eqnarray*}
		which is compatible with the admissible relative contact pair $(\mathbb{S}^5_{st},\mathbb{S}^3_{st})$ by Theorem \ref{thm:Relative_Stab_Compatible_Open Book}. Moreover, by construction, the pages $F_{\mathbb{S}^3}, F_{\mathbb{S}^5}$ are Weinstein (as being constructed from Weinstein handles), and the conditions (i), (ii) are satisfied as required.
	\end{proof}
	
	Returning back to the proof of Theorem \ref{thm:main thm}, starting from the generalized square bridge diagram $\Delta(\mathbb{L})$, consisting of $m$ octahedrons, of the given link, we know, by Proposition \ref{prop:Construction_via_stabilizations}, that there exists a compatible relative open book $(F_{\mathbb{S}^5},F_{\mathbb{S}^3},h_{\mathbb{S}^5},h_{\mathbb{S}^3})$ on $(\mathbb{S}^5_{st},\mathbb{S}^3_{st})$ with Weinstein pages and satisfying (v) in the statement. Moreover, for any component $L$ of $\,\mathbb{L}$ and its equatorial knot $K\subset \mathbb{K}$, we know, by Proposition \ref{Prop:reconstruction}, that we can reconstruct $L$ as a piecewise smooth Legendrian $2$-sphere $L' \subset \mathbb{S}^5_{st}$ and by rounding the corners of $L'$, one can obtain a smooth Legendrian $2$-sphere which is Legendrian isotopic to $L$. Indeed, by construction, $L'$ (and so its smoothening) is contained in the core $\bigsqcup_{i=1}^{m}\mathcal{D}_{i}$ of the page $F_{\mathbb{S}^5}=\bigsqcup_{i=1}^{m}\mathcal{R}_{\mathcal{D}_{i}}$. In particular, the piecewise smooth Legendrian equatorial $1$-sphere $K' \subset \mathbb{S}^3_{st}$ (and its smoothening) is contained in the core $\bigsqcup_{i=1}^{m}\gamma_{i}$ of the page $F_{\mathbb{S}^3}=\bigsqcup_{i=1}^{m}\mathcal{R}_{\gamma_{i}}$. We depict this reconstruction for the standard Legendrian $2$-unknot in Figure \ref{Fig:Cuspitaledge}.
	
	\begin{figure}[H]
		\begin{center}
			\includegraphics[width=0.5\textwidth]{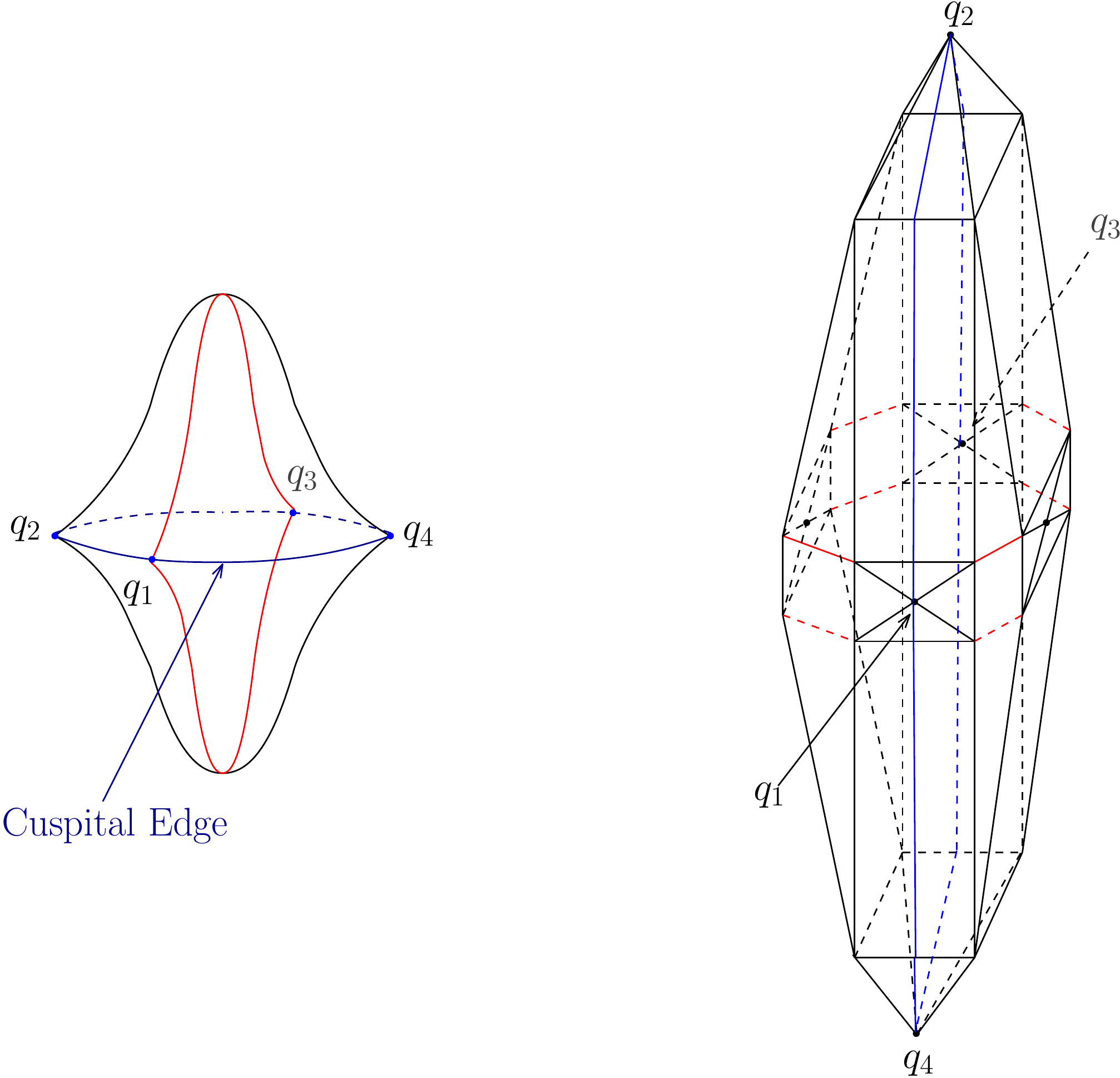}
			\caption{On the left, the front projection of the (admissible) standard Legendrian $2$-unknot $L$, and on the right, the corresponding piecewise smooth Legendrian $2$-unknot $L'$ (reconstruction of $L$).}
			\label{Fig:Cuspitaledge}
		\end{center}
	\end{figure}
	
	So far, we have proved the statements (i) and (iii) (considering $L$ as the rounded $L'$). Since the reconstruction of $K$ (rounded $K'$) in the page $F_{\mathbb{S}^3}$ is based on the square bridge diagram $\Delta(K)$, the statement (ii) follows directly from the same lines in \cite{Ar}. In order to prove (iv), we observe that the arguments used in proving (ii) can be generalized to high dimensions in a natural way. More precisely, since $L$ is contained in the core $\bigsqcup_{i=1}^{m}\mathcal{D}_{i}$, its ribbon $\mathcal{R}_L$ is contained in the page $F_{\mathbb{S}^5}$ (the ribbon of $\bigsqcup_{i=1}^{m}\mathcal{D}_{i}$). Note that reconstruction process does not change the Legendrian isotopy type of $L$, so the contact (Thurston-Bennequin) framing of $L$ is still the same as what it was at the beginning. Moreover, consider tubular neighborhood $N(L)$ of $L$ in the page $F_{\mathbb{S}^5}$. Being a codimension zero submanifold of the ribbon $F_{\mathbb{S}^5}$, $N(L)$ is the ribbon of $L$. By definition, the contact framing of $L$ is the one coming from the ribbon $N(L)$. Therefore, the contact framing and the $N(L)$-framing (page framing) of $L$ are the same. Since $N(L) \subset F_{\mathbb{S}^5}$, the framing which $L$ gets from the page $F_{\mathbb{S}^5}$ is the same as the contact framing of $L$.
	This completes the proof of Theorem \ref{thm:main thm}.
	
	\qed	
	
	%-----------------------------------------------------------------------------------------------
	%-----------------------------------------------------------------------------------------------
	%-----------------------------------------------------------------------------------------------
	
	\subsection{Admissible Relative Contact Surgery and Proof of Theorem \ref{thm:contact surgery}} \label{sec:admissible-surgery}
	
	In this section, we will describe a(n) \textit{(admissible) relative contact $r-$surgery}  along a(n) (admissible) relative Legendrian $2$-knot $L$ in $(\mathbb{S}^{5}_{st},\mathbb{S}^{3}_{st})$ which gives us a new relative contact pair, and then we will consider a special type of it which consists of a contact (Legendrian)  $(\pm 1)$-surgery along $L$ in  $\mathbb{S}^{5}_{st}$ and a simultaneously performed contact (Legendrian) $(\pm 1)$-surgery along the equatorial $1$-knot $K$ of $L$ in $\mathbb{S}^{3}_{st}$, and prove Theorem \ref{thm:contact surgery}.\\
	
	A contact surgery along a Legendrian $1$-knot in a contact $3$-manifold is a topolological surgery where the surgery coefficient measured with respect to contact framing: Let $K$ be an Legendrian $1$-knot in a contact manifold $(M^{3},\eta)$ with a trivial neigborhood $N(K)\cong \mathbb{D}^2 \times \mathbb{S}^1$ with convex boundary. The boundary $\partial N(K)$ has two homology generators represented by a \textit{meridian} $\mu$ and a \textit{longitude} $\lambda$ where $\lambda$ is a parallel copy of $K$ obtained by pushing $K$ using its contact framing. Choose a trivilization $\phi: \mathbb{D}^2 \times \mathbb{S}^1 \rightarrow N(K)$ such that $\phi_*(\partial \mathbb{D}^2 \times \{pt\})=\mu$ and $\phi_*(\{pt\}\times \mathbb{S}^1)=\lambda$ where $\partial \mathbb{D}^2 \times \{pt\}$ and $\{pt\}\times \mathbb{S}^1$ are the generators of $H_1(\mathbb{S}^1\times \mathbb{S}^1)$. A \textit{contact} $r=(\frac{p}{q})-$\textit{surgery along} $K$ gives a new (not necessarily unique) contact manifold  $\displaystyle (\mathbb{D}^2 \times \mathbb{S}^1) \cup_{\phi_r} (M^3\setminus N(K)) $ where $ \phi_r: \partial( \mathbb{D}^2 \times \mathbb{S}^1) \rightarrow \partial(M^3\setminus N(K))$ is a gluing map such that $(\phi_r)_*(\partial \mathbb{D}^2 \times \{pt\})=p\mu+q\lambda$. In the case of $r=\pm1$, a contact surgery produces a unique (up to isotopy) contact structure on the surgered (resulting) $3$-manifold. For details see \cite{Gm} and \cite{OS}.\\
	
	In higher dimensions, one could also consider contact surgeries along isotropic and coisotropic spheres (see \cite{CE}). However, a contact surgery along a Legendrian $n$-knot is defined in a similar way: It is a topological surgery where the surgery coefficient is determined relative to contact framing.  Let $\mathcal{S}$ be an Legendrian $n$-knot in a contact manifold $(Y^{2n+1},\xi)$ with a trivial neigborhood $N(\mathcal{S})\cong \mathbb{D}^{n+1} \times \mathbb{S}^n$ with convex boundary. The boundary $\partial N(\mathcal{S})$ has two homology generators represented by a \textit{meridian} $m$ and a \textit{longitude} $\ell$ which is a parallel copy of $\mathcal{S}$ obtained by pushing $\mathcal{S}$ using its contact framing.  Choose a trivilization $\Phi: \mathbb{D}^{n+1} \times \mathbb{S}^n \rightarrow N(\mathcal{S})$ such that $\Phi_*(\partial \mathbb{D}^{n+1} \times \{pt\})=m$ and $\Phi_*(\{pt\}\times \mathbb{S}^n)=\ell$ where $\partial \mathbb{D}^{n+1} \times \{pt\}$ and $\{pt\}\times \mathbb{S}^n$ are the generators of $H_n(\mathbb{S}^n\times \mathbb{S}^n)$. A \textit{contact} $r=(\frac{p}{q})-$\textit{surgery along} $\mathcal{S}$ gives a new (not necessarily unique) contact manifold  $\displaystyle (\mathbb{D}^{n+1} \times \mathbb{S}^n) \cup_{\Phi_r} (Y^{2n+1}\setminus N(\mathcal{S})) $ where $ \Phi_r: \partial( \mathbb{D}^{n+1} \times \mathbb{S}^n) \rightarrow \partial(Y^5\setminus N(\mathcal{S}))$ is a gluing map such that $(\Phi_r)_*(\partial \mathbb{D}^{n+1} \times \{pt\})=p\mu+q\lambda$. As before, in the case of $r=\pm1$, a contact surgery produces a unique (up to isotopy) contact structure on the surgered (resulting) $(2n+1)$-manifold.\\
	
	Returning back to the dimension three and five, we now introduce a \textit{relative $r-$surgery}  along a relative $2$-knot. Although it can be defined in a purely topological way, we give the definition in a contact setting:
	
	\begin{definition} \label{def:relative_contact_surgery}
		Let $L$ be a relative Legendrian $2$-knot in $\mathbb{S}^{5}_{st}$. By Legendrian neighborhood theorem, $L$ has a neighborhood $(N(L), \xi^5_{st}|_{N(L)})$ contactomorphic to a neighborhood of the zero section in the $1$-jet space $(T^{*}L\times \mathbb{R}, \textrm{Ker}(dz-\sum_{i=1}^{2}p_idq_i))$. The contact $r-$surgery along $L$ gives a new (not unique) contact $5$-manifold $(Y^{5},\xi)$ obtained by removing $N(L)$ from $\mathbb{S}^{5}_{st}$, and gluing $\mathbb{D}^3 \times \mathbb{S}^2$ back via a diffeomorphism inducing a map $\Phi_{r}$ on the boundary (as above). Since $L$ is a relative knot, it has an equatorial circle $K$ which is Legendrian in $\mathbb{S}^{3}_{st}$ and its trivial neighborhood $(N(K), \xi_{st}^3|_{N(K)})$ is contactomorphic to a tubular neighborhood of the zero section in $\displaystyle (T^{*}K\times \mathbb{R}, \textrm{Ker}(dz-p_1dq_1))$ which is contactly embedded in $(T^{*}L\times \mathbb{R}, \textrm{Ker}(dz-\sum_{i=1}^{2}p_idq_i))$. Since $K$ is the isotropic equator of $L$, we have $T^{*}K\times \mathbb{R} \subset T^{*}L\times \mathbb{R}$, and indeed, the neighborhood  $N(K)\cong K\times D^{2}$ is a contact submanifold of $N(L)\cong L\times D^{3}$ where $\{a\}\times D^{2} \subset \{a\}\times D^{3}$ for any point $a \in K$. So, when we remove $N(L)$ from $\mathbb{S}^{5}_{st}$, we simultaneously remove $N(K)$ from $\mathbb{S}^{3}_{st}$. Therefore, by gluing $\mathbb{D}^{3} \times \mathbb{S}^2$, and so $\mathbb{D}^{2} \times \mathbb{S}^1(\subset \mathbb{D}^{3} \times \mathbb{S}^2)$, back via diffeomorphisms $\Phi_r: \partial(\mathbb{D}^{3} \times \mathbb{S}^2) \rightarrow \partial(\mathbb{S}^{5}_{st}\setminus N(L))$ and its restriction $\Phi_r|_{\partial (\mathbb{D}^{2} \times \mathbb{S}^1)}: \partial( \mathbb{D}^{2} \times \mathbb{S}^1) \rightarrow \partial(\mathbb{S}^{3}_{st}\setminus N(K))$, we obtain the surgered manifolds $$(Y^5,\xi):=(\mathbb{S}^{5}_{st}\setminus {N}(L)) \cup (\mathbb{D}^{3} \times \mathbb{S}^2), \quad M^3:=(\mathbb{S}^{3}_{st}\setminus N(K)) \cup (\mathbb{D}^{2} \times \mathbb{S}^1).$$ Note that we obtain a relative (smooth) topological pair $(Y, M)$. Indeed, one can check (see Lemma \ref{lem:Relative_contact_r_surgery}) that the $2$-plane distribution $\eta:=\xi \cap TM$ defines a contact structure on $M$ (so $(M,\eta)$ is a contact submanifold of $(Y,\xi)$), and $(M,\eta)$ is the result of a contact $r-$surgery performed simultaneously along $K$ in $\mathbb{S}^{3}_{st}$. In particular, we have obtained a relative contact pair $(Y_{\xi}, M_{\eta})$ which is said to be obtained by a \textit{relative contact $r$-surgery along a relative Legendrian $2$-knot} $L$ in $\mathbb{S}^{5}_{st}$.
	\end{definition}
	
	\begin{lemma} \label{lem:Relative_contact_r_surgery}
		In the setting of Definition \ref{def:relative_contact_surgery}, the $2$-plane distribution $\eta=\xi \cap TM$ is a contact structure on $M$. Moreover, $(M,\eta)$ is the result of a contact $r-$surgery along a Legendrian $1$-knot $K$ (the equator of $L$) in $\mathbb{S}^{3}_{st}$.
	\end{lemma}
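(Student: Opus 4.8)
The plan is to prove the two assertions separately, reducing everything to the nested standard neighborhoods fixed in Definition~\ref{def:relative_contact_surgery} and exploiting that $\Phi_r$ and its restriction $\Phi_r|_{\partial(\mathbb{D}^2\times\mathbb{S}^1)}$ use the same coefficient $r=p/q$. Write $\alpha$ for a contact form of $\xi$ on $Y$. Since $M$ is a smooth submanifold, $\eta=\xi\cap TM=\ker(\alpha|_{TM})$, so the first task is to show $TM$ is transverse to $\xi$ along $M$ (making $\eta$ a rank-$2$ distribution) and that $(\alpha|_M)\wedge d(\alpha|_M)\neq 0$, i.e. that $M$ is a contact submanifold. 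As this is a local and open condition, I would verify it on the two pieces of the decomposition $M=(\mathbb{S}^3_{st}\setminus N(K))\cup_{\Phi_r|}(\mathbb{D}^2\times\mathbb{S}^1)$ and then patch.

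On $\mathbb{S}^3_{st}\setminus N(K)$ the surgery changes nothing, and since $\mathbb{S}^3_{st}\subset\mathbb{S}^5_{st}$ is a contact submanifold with $\alpha_0^5|_{\mathbb{R}^3}=\alpha_0^3$, there $\eta=\xi_{st}^5\cap T\mathbb{S}^3=\xi_{st}^3$, which is contact. For the region of the surgery I would use the nested local model of Definition~\ref{def:relative_contact_surgery}, namely $J^1(K)=(T^*K\times\mathbb{R},\ker(dz-p_1dq_1))\subset J^1(L)=(T^*L\times\mathbb{R},\ker(dz-p_1dq_1-p_2dq_2))$ realized as the contact submanifold $\{q_2=p_2=0\}$; restricting $dz-p_1dq_1-p_2dq_2$ to this slice returns $dz-p_1dq_1$, which records that the inclusion is contact before surgery and gives the correct transversality along $K$.

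The main step is the glued handle $\mathbb{D}^2\times\mathbb{S}^1\subset\mathbb{D}^3\times\mathbb{S}^2$. Here I would perform the five-dimensional contact $r$-surgery by the standard model supported in $N(L)$: for $r=\pm1$ this is an attachment of the relative Weinstein handle $H=(H^4,H^2)=(D^2\times D^2,D^1\times D^1)$ with nested core and cocore exactly as in Definition~\ref{def:Stab_of_Rel_Open_Book}, and for general $r=p/q$ a chain of such attachments dictated by the continued-fraction expansion of $r$, with the push-offs taken inside $\mathbb{S}^3_{st}$ so that the chain for $L$ restricts to the chain for $K$. Because $\Phi_r$ restricts to $\Phi_r|_{\partial(\mathbb{D}^2\times\mathbb{S}^1)}$ and both are given by the same $p\mu+q\lambda$, the model can be arranged so that the induced contact form on $\mathbb{D}^3\times\mathbb{S}^2$ restricts on $\mathbb{D}^2\times\mathbb{S}^1$ to the corresponding three-dimensional surgery contact form; this simultaneously shows $\eta$ is contact on the handle and identifies it with the contact $r$-surgery structure along $K$. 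Patching with the previous paragraph, $(M,\eta)$ is globally the contact $r$-surgery along $K$ in $\mathbb{S}^3_{st}$.

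The hard part will be precisely this compatibility of the surgery models in the two dimensions, i.e. producing a single five-dimensional contact form on the glued handle whose restriction to the nested three-manifold is honestly the three-dimensional model. For $r=\pm1$ I expect this to be transparent: near $K$ the model can be taken invariant under the reflection $(x_2,y_2)\mapsto(-x_2,-y_2)$, which is a strict contactomorphism of $\alpha_0^5$ with fixed locus $\mathbb{S}^3_{st}$, and since the Reeb field is tangent to the fixed set while $d\alpha$ is nondegenerate on each eigenspace of the differential inside $\xi$, the fixed-point set is automatically a contact submanifold; realizing $M$ locally as such a fixed set then yields the contact-submanifold property at once, matching the nested handle of Definition~\ref{def:Stab_of_Rel_Open_Book}. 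For general $r$ one must instead check directly that the continued-fraction chain and its contact framings restrict compatibly to $K$, which is the only place a genuine computation is needed.
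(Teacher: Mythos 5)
Your overall strategy for the first assertion (verify the contact condition piecewise on $\mathbb{S}^3_{st}\setminus N(K)$ and on the glued-in piece, then patch) is sound, and your nested jet-space model near $K$ matches the one fixed in Definition \ref{def:relative_contact_surgery}. The problem is your ``main step''. You propose to realize the five-dimensional contact $r$-surgery as a chain of relative Weinstein handle attachments dictated by the continued-fraction expansion of $r=p/q$. That reduction is a specifically three-dimensional theorem (Ding--Geiges), resting on the classification of tight contact structures on solid tori; no analogue is available for $\mathbb{D}^3\times\mathbb{S}^2$, and Definition \ref{def:relative_contact_surgery} does not present the five-dimensional $r$-surgery as a handle chain at all --- it is a topological gluing by $\Phi_r$ together with a (non-unique) choice of contact structure on $\mathbb{D}^3\times\mathbb{S}^2$. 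So for general $r$ your argument has nothing to attach. Even for $r=\pm1$ you conflate the two signs: only contact $(-1)$-surgery is a Weinstein handle attachment in the sense of Definition \ref{def:Stab_of_Rel_Open_Book}; contact $(+1)$-surgery is not, so the case you call transparent covers only one of the two signs. Your reflection-symmetry observation --- that $(x_2,y_2)\mapsto(-x_2,-y_2)$ is a strict contactomorphism of $\alpha_0^5$ whose fixed locus is a contact submanifold --- is correct and gives a clean way to see transversality near $K$, but it does not by itself produce an invariant contact structure on the glued handle for an arbitrary coefficient, which is exactly the point at issue.

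The paper's proof is more elementary and avoids handle models entirely. For the first assertion it observes that $\partial N(K)\cong K\times S^1$ sits inside $\partial N(L)\cong L\times S^2$ as a convex hypersurface whose characteristic foliation is the restriction of that of $\partial N(L)$, and likewise $\mathbb{S}^1\times\mathbb{S}^1\subset\mathbb{S}^2\times\mathbb{S}^2$ for the glued-in piece; since $\Phi_r$ identifies the characteristic foliations of $\mathbb{S}^2\times\mathbb{S}^2$ and $\partial(\mathbb{S}^5_{st}\setminus N(L))$, its restriction identifies the sub-foliations, so the two contact pieces of $M$ glue to a contact structure $\eta=\xi\cap TM$. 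For the second assertion it restricts the trivialization $\Phi$ of $N(L)$ to a trivialization $\phi$ of $N(K)$, sets $\mu=m\cap\partial N(K)$ and $\lambda=\ell\cap\partial N(K)$, notes that $\lambda$ is the contact push-off of $K$ precisely because $N(K)$ is contactly embedded in $N(L)$, and observes that the equator of a $2$-sphere representing $pm+q\ell$ represents $p\mu+q\lambda$. This homological restriction argument is what actually identifies the induced surgery on $K$ as a contact $r$-surgery for every $r$, and it is the step your proposal defers to a computation inside a framework that is not available in dimension five.
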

	
	\begin{proof} First we note that the contact embedding of $N(K)\cong K\times D^{2}$ into $N(L)\cong L\times D^{3}$
		is also a proper embedding, i.e., $$\partial N(K)\cong K\times S^1 \subset L\times S^2\cong \partial N(L), \quad \textrm{with} \quad \{a\}\times S^1 \subset \{a\}\times S^2,\quad  \forall a \in K,$$ and the characteristic foliation on the convex boundary $\partial N(K) \subset \mathbb{S}^{3}_{st}$ is the restriction of the one on $\partial N(L) \subset \mathbb{S}^{5}_{st}$. On the other hand, the contact structure on $\mathbb{D}^3\times \mathbb{S}^2$ (the one chosen to perform the contact $r$-surgery along $L$ resulting $(Y,\xi)$) produces the characteristic foliation on the convex boundary $\mathbb{S}^2 \times \mathbb{S}^2$. This foliation restricts to the one on the convex boundary $\mathbb{S}^1 \times \mathbb{S}^1$ of the properly and contactly embedded submanifold $\mathbb{D}^2\times \mathbb{S}^1$ of $\mathbb{D}^3\times \mathbb{S}^2$. Moreover, when $\mathbb{D}^3\times \mathbb{S}^2$ is glued to $\mathbb{S}^{5}_{st}\setminus {N}(L)$ via $\Phi_r:\mathbb{S}^2 \times \mathbb{S}^2 \to \partial (\mathbb{S}^{5}_{st}\setminus {N}(L))$, the characteristic foliations on $\mathbb{S}^2 \times \mathbb{S}^2, \partial (\mathbb{S}^{5}_{st}\setminus {N}(L))$ are identified with each other so that a contact structure $\xi$ is obtained on the surgered manifold $Y$. In particular, the restriction diffeomorphism $\Phi_r|_{\mathbb{S}^1 \times \mathbb{S}^1}:\mathbb{S}^1 \times \mathbb{S}^1 \to \partial (\mathbb{S}^{3}_{st}\setminus {N}(K))$ glues the contact submanifolds $\mathbb{D}^2 \times \mathbb{S}^1 \subset \mathbb{D}^3\times \mathbb{S}^2$ and $\mathbb{S}^{3}_{st}\setminus {N}(K)\subset \mathbb{S}^{5}_{st}\setminus {N}(L)$ together so that the characteristic sub-foliations on $\mathbb{S}^1 \times \mathbb{S}^1$ and $\partial (\mathbb{S}^{3}_{st}\setminus {N}(K))$ are identified with each other. This means that when a contact surgery performed along $L$, a contact structure $\eta=\xi \cap TM$ is,  simultaneously, constructed on the surgered manifold $M$ as claimed. \\ 
		
		Now, the homology group  $H_2(\partial N(L))$ has two generators represented by a meridian $m$ and a longitude $\ell$ which is a parallel copy of $L$ on $\partial N(L)$ obtained by pushing $L$ using its contact framing. Choose a trivilization $$\Phi: \mathbb{D}^3\times \mathbb{S}^2\rightarrow  N(L)$$ such that $\Phi_*(\partial \mathbb{D}^3\times\{pt\})=m$ and $\Phi_*(\{pt\}\times \mathbb{S}^2)=\ell$ where $\partial \mathbb{D}^3\times\{pt\}$ and $\{pt\}\times \mathbb{S}^2$ are the generators of $H_2(\mathbb{S}^2\times \mathbb{S}^2)$. If $p$ and $q$ are coprime integers such that $r=\frac{p}{q}$, then the result of the contact $r-$surgery to $\mathbb{S}^{5}_{st}$ is the manifold $$(Y,\xi):=(\mathbb{S}^{5}_{st}\setminus N(L)) \cup_{\Phi_r} (\mathbb{D}^{3} \times \mathbb{S}^{2}) $$ where $(\Phi_r)_*(\partial \mathbb{D}^3\times\{pt\})=pm+q\ell$. As explained above simultaneously a surgery is being performed along the equatorial Legendrian circle $K$ in $\mathbb{S}^{3}_{st}$: Replacing $N(L)$ with $\mathbb{D}^3 \times\mathbb{S}^2$ corresponds to replacing $N(K)$ with $\mathbb{D}^2 \times\mathbb{S}^1$. For the homology group $H_1(\partial N(K))\cong \mathbb{Z}\oplus\mathbb{Z}$, consider the following two generators: Let $\mu \subset m$ be the meridian $\mu=m \cap \partial N(K)$, and $\lambda \subset \ell$ be the longitude $\lambda=\ell \cap \partial N(K)$. Note that $\mu$ is an equatorial circle of $m$ as $N(K)$ is properly embedded in $N(L)$, also that $\lambda$ is an equatorial circle of $\ell$ as $N(K)$ is contactly embedded in $N(L)$, so in particular, it is a parallel copy of $K$ produced by the contact framing of $K$ in $\mathbb{S}^{3}_{st}$. Now the trivilization $\Phi$ restricts to a trivilization $$\phi\doteq\Phi|_{\mathbb{D}^2\times \mathbb{S}^1}=\mathbb{D}^2\times \mathbb{S}^1 \rightarrow N(K)$$ such that $\phi_*(\partial \mathbb{D}^2\times\{pt\})=\mu$ and $\phi_*(\{pt\}\times \mathbb{S}^1)=\lambda$ where $\partial \mathbb{D}^3\times\{pt\}$ and $\{pt\}\times \mathbb{S}^2$ are the generators of $H_1(\mathbb{S}^1\times \mathbb{S}^1)$. Finally, observe that the gluing via the restriction map $\Phi_r|_{\partial (\mathbb{D}^{2} \times \mathbb{S}^1)}$ (resulting the contact submanifold $(M,\eta)$) is equivalent to gluing via the map $$\phi_r: \partial( \mathbb{D}^{2} \times \mathbb{S}^1) \rightarrow \partial(\mathbb{S}^{3}_{st}\setminus N(K))\quad \textrm{with}\quad  \phi_*(\partial \mathbb{D}^2\times\{pt\})=p\mu+q\lambda.$$  (Since the $2$-sphere $\Phi_r(\partial \mathbb{D}^3\times\{pt\})$ travels $p$-times around the meridian $m$ and $q$-times around the longitude $\ell$, so does its equator $\Phi_r(\partial \mathbb{D}^2\times\{pt\})$, i.e., it travels $p$-times around the meridian $\mu$ and $q$-times around the longitude $\lambda$.) Hence, the simultaneous surgery along $K$ constructing the contact submanifold $(M,\eta=\xi \cap TM)$ is also a contact $r-$surgery.  
	\end{proof}

	\begin{proof}[\textbf{Proof of Theorem \ref{thm:contact surgery}}]
		
		Suppose that $(Y^5_{\xi}, M^3_{\eta})$ is a given admissible relative contact pair obtained by an admissible relative contact $(\pm 1)$-surgery along an admissible Legendrian relative $2$-link $\mathbb{L}=\bigsqcup_{i=1}^{r} L_{i}$ in $\mathbb{S}^5_{st}$ whose equatorial isotropic $1$-link is $\mathbb{K}=\mathbb{L}\cap \mathbb{S}^3_{st}=\bigsqcup_{i=1}^{r} {K}_{i}$. We know, by Theorem \ref{thm:main thm}, that there exists a compatible relative open book $(F_{\mathbb{S}^5},F_{\mathbb{S}^3},h_{\mathbb{S}^5},h_{\mathbb{S}^3})$ on the relative contact pair $(\mathbb{S}^5,\mathbb{S}^3)$ with the Weinstein pages $F_{\mathbb{S}^5},F_{\mathbb{S}^3}$ and the monodromies 
		$$h_{\mathbb{S}^5}=\tau_{\mathcal{D}_1}\circ \tau_{\mathcal{D}_2}\circ\cdots \circ \tau_{\mathcal{D}_m}, \quad h_{\mathbb{S}^3}=\tau_{\gamma_1}\circ \tau_{\gamma_2}\circ \cdots \circ \tau_{\gamma_m}$$ (as constructed in the proof of the theorem) such that 
		each reconstructed and smoothened $K_i$ lies on $F_{\mathbb{S}^3}$ and the page framing of $K_i$ is equal to its contact framing in $\mathbb{S}^3_{st}$, and similarly, each reconstructed and smoothened $L_i$ lies on $F_{\mathbb{S}^5}$ and the page framing of $L_i$ is equal to its contact framing in $\mathbb{S}^5_{st}$. From know on, all reconstructed links and their reconstructed components are also assumed to be smoothened, and by abuse of notation, we'll denote them by the same letters.\\
		
		By construction, all components of the reconstructed $2$-link $\mathbb{L}$ sits on a single page, say $F^0_{\mathbb{S}^5}$, as embedded Legendrian $2$-unknots (of $\mathbb{S}^5_{st}$) in such a way that all components of the reconstructed equatorial $1$-link $\mathbb{K}$ lie on a page, say $F^0_{\mathbb{S}^3}$ (a Weinstein submanifold of $F^0_{\mathbb{S}^5}$) as embedded Legendrian $1$-unknots (of $\mathbb{S}^3_{st}$). Being Legendrian on a symplectic page of a compatible open book, all $L_i$'s (resp. $K_i$'s) are Lagrangian in $F^0_{\mathbb{S}^5}$ (resp. in $F^0_{\mathbb{S}^3}$). Recall that, by construction, the components of the reconstructed $\mathbb{K}$ are still mutually disjoint in $F^0_{\mathbb{S}^3}$ and the linking matrix of the reconstructed $\mathbb{K}$ is the same as the linking matrices of the original $\mathbb{K}$ and $\mathbb{L}$. However, the components of reconstructed $\mathbb{L}$ have some overlaping regions. In order to get rid of such regions, we will realize each reconstructed component $L_i$ on a different page, call $F^i_{\mathbb{S}^5}$, of the open book $(F_{\mathbb{S}^5},h_{\mathbb{S}^5})$ in such a way that each corresponding equatorial component $K_i$ will be contained on a (different) page $F^i_{\mathbb{S}^3}$ of the the open book $(F_{\mathbb{S}^3},h_{\mathbb{S}^3})$ which is a Weinstein subdomain of $F^i_{\mathbb{S}^5}$.\\
		
		First, note that since all reconstructed components of $\mathbb{L}$ are contained in the Lagrangian core of the page $F^0_{\mathbb{S}^5}$, they do not intersect the binding, so they miss a page of $(F_{\mathbb{S}^5},h_{\mathbb{S}^5})$, and hence we may assume that they are contained in a closed neighborhood $N_{\mathbb{S}^5}(F^0_{\mathbb{S}^5})$ of $F^0_{\mathbb{S}^5}$ in $\mathbb{S}^5$. This neighborhood can be contactly identified with $(F \times [-1,1], \textrm{Ker}(\alpha_0^5+dt))$ where $t$ is the coordinate on $[-1,1]$, $F= F^0_{\mathbb{S}^5}$ is equipped with the symplectic form $d\alpha_0^5$, and we identify $F^0_{\mathbb{S}^5}$ with the page $F \times \{0\}$. Thus, we have a contactomorphism (contact identification)
		$$\Theta: (N_{\mathbb{S}^5}(F^0_{\mathbb{S}^5}),\textrm{Ker}(\alpha_0^5|_{N_{\mathbb{S}^5}(F^0_{\mathbb{S}^5})}) \longrightarrow (F \times [-1,1], \textrm{Ker}(\alpha_0^5+dt)).$$ The contactomorphism $\Theta$ identifies the pages (fibers) in $N_{\mathbb{S}^5}(F^0_{\mathbb{S}^5})$ with those in $F \times [-1,1]$, and the Reeb vector field of $N_{\mathbb{S}^5}(F^0_{\mathbb{S}^5})$ corresponds to the Reeb vector field $\partial / \partial t$ of $F \times [-1,1]$. \\
		
		Similarly, all reconstructed components of $\mathbb{K}$ are contained in the Lagrangian core of the page $F^0_{\mathbb{S}^3}$, and they miss a page of $(F_{\mathbb{S}^3},h_{\mathbb{S}^3})$, and so they are contained in a closed neighborhood $N_{\mathbb{S}^3}(F^0_{\mathbb{S}^3})$ of $F^0_{\mathbb{S}^3}$ in $\mathbb{S}^3$ which can be contactly identified with $(G \times [-1,1], \textrm{Ker}(\alpha_0^3+dt))$ where $G= F^0_{\mathbb{S}^3}$ is equipped with the symplectic form $d\alpha_0^3$, and we identify $F^0_{\mathbb{S}^3}$ with the page $G \times \{0\}$. In fact, $G$ is a properly embedded Weinstein subdomain of $F$, and the restriction of $\Theta$ gives a contactomorphism 
		$$\Theta|_{N_{\mathbb{S}^3}(F^0_{\mathbb{S}^3})}: (N_{\mathbb{S}^3}(F^0_{\mathbb{S}^3}),\textrm{Ker}(\alpha_0^3|_{N_{\mathbb{S}^3}(F^0_{\mathbb{S}^3})}) \longrightarrow (G \times [-1,1], \textrm{Ker}(\alpha_0^3+dt)).$$ Likewise the restricted $\Theta$ identifies the pages (fibers) in $N_{\mathbb{S}^3}(F^0_{\mathbb{S}^3})$ with those in $G \times [-1,1]$, and the Reeb vector field of $N_{\mathbb{S}^3}(F^0_{\mathbb{S}^3})$ corresponds to the Reeb vector field $\partial / \partial t$ of $G \times [-1,1]$. (Note that $(G \times [-1,1], \textrm{Ker}(\alpha_0^3+dt))$ is contactly and properly embedded in $ (F \times [-1,1], \textrm{Ker}(\alpha_0^5+dt))$ and their Reeb vector fields coincide, namely $\partial /\partial t$.)\\
		
		Fix a metric $d$ on $F$ (and so on $G$) and let $d_0$ be the standard metric on $[-1,1]$. Consider the pull-back metric (distance function) \;$\textrm{dist}\doteq \Theta^*(d \times d_0)$ \;on $N_{\mathbb{S}^5}(F^0_{\mathbb{S}^5})$ where  is the identification map. Let $\epsilon>0$ be a sufficiently small number such that $\epsilon<\textrm{dist}(K_i,K_j)$ for all $i\neq j$, and choose a partition $\{-\epsilon=\epsilon_1\lneqq\epsilon_2\lneqq \cdots \lneqq\epsilon_r=\epsilon\}$ of the interval $[-\epsilon,\epsilon]$.\\
		
		Next, for each $i=1,2, ... r$, consider the image $\Theta(L_i)$ which is Lagrangian in $(F\times \{0\}, d\alpha_0^5)$ and Legendrian in $(F \times [-1,1], \textrm{Ker}(\alpha_0^5+dt))$.  Using the flow of the Reeb vector field $\partial/\partial t$ (which is transverse to the pages) we push the $2$-unknot $\Theta(L_i)$ untill we obtain a new smooth $2$-unknot $U_i$ which is completely contained in the page $F\times \{\epsilon_i\}$. Note that since $\partial/\partial t$ is a contact vector field, $\Theta(L_i)$ and $U_i$ are Legendrian isotopic in $(F \times [-1,1], \textrm{Ker}(\alpha_0^5+dt))$. The restriction of this isotopy to the equator $\Theta(K_i)$ defines a Legendrian isotopy in $(G \times [-1,1], \textrm{Ker}(\alpha_0^3+dt))$ from $\Theta(K_i)$ to a new smooth Legendrian $1$-unknot $V_i \subset G\times \{\epsilon_i\}\subset F\times \{\epsilon_i\}$ which is, indeed, the isotropic equator of $U_i$. Observe that $U_i$'s (and so $V_i$'s) are mutually disjoint as they lie on different pages. Also by the choice of $\epsilon$, the new smooth $2$-link $\mathbb{U}=\bigsqcup_{i=1}^{r} U_{i}$ and its smooth equatorial $1$-link $\mathbb{V}=\bigsqcup_{i=1}^{r} V_{i}$ have a common linking matrix which is, indeed, equal to the common linking matrix of the originally given link $\mathbb{L}$ and its equatorial link $\mathbb{K}$.\\
		
		Now consider the pull-back link $\mathbb{L}'\doteq\Theta^{-1}(\mathbb{U})\subset N_{\mathbb{S}^5}(F^0_{\mathbb{S}^5})\subset \mathbb{S}^5_{st}$ and its equatorial link $\mathbb{K}'\doteq\Theta^{-1}(\mathbb{V})\subset N_{\mathbb{S}^3}(F^0_{\mathbb{S}^3})\subset \mathbb{S}^3_{st}$. Since $\Theta$ is a contactomorphism, the following are immediate:
		\begin{itemize}
			\item[(1)] Each component $L'_i\doteq \Theta^{-1}(U_i)$ of $\mathbb{L}'$ is Lagrangian on $F^{i}_{\mathbb{S}^5}\doteq \Theta^{-1}(F\times \{\i\})$, and Legendrian in $\mathbb{S}^5_{st}$, and Legendrian isotopic to original $L_i$. 
			\item[(2)] Each component $K'_i\doteq \Theta^{-1}(V_i)$ of $\mathbb{K}''$ is Lagrangian on $F^{i}_{\mathbb{S}^3}\doteq \Theta^{-1}(G\times \{i\})$, and Legendrian in $\mathbb{S}^3_{st}$, and Legendrian isotopic to original $K_i$. 
			\item[(3)] The $2$-link $\mathbb{L}'=\bigsqcup_{i=1}^{r} L'_{i}$ and its equatorial $1$-link $\mathbb{K}'=\bigsqcup_{i=1}^{r} K'_{i}$ have a common linking matrix which coincides with the common linking matrix of $\mathbb{L}=\bigsqcup_{i=1}^{r} L_{i}$ and $\mathbb{K}=\bigsqcup_{i=1}^{r} K_{i}$.\\
		\end{itemize}
		
		As a result, we conclude that the relative $2$-links $\mathbb{L}$, $\mathbb{L}'$ and their equatorial $1$-links $\mathbb{K}$, $\mathbb{K}'$ are Legendrian isotopic in $\mathbb{S}^5_{st}$ and $\mathbb{S}^3_{st}$, respectively. Therefore, the relative contact $\pm 1$-surgery along $\mathbb{L}'$ also results in the same (up to isotopy) relative contact pair $(Y^5_{\xi}, M^3_{\eta})$. On the other hand, each component of $\mathbb{L}'$ (resp. $\mathbb{K}'$) sits as a Lagrangian $2$-unknot (resp. Lagrangian $1$-unknot) on the pages of the compatible open book $(F_{\mathbb{S}^5},h_{\mathbb{S}^5})$ on $\mathbb{S}^5_{st}$ (resp. $(F_{\mathbb{S}^3},h_{\mathbb{S}^3})$ on $\mathbb{S}^3_{st}$). Therefore, by ``surgery on a page'' theorems (Theorem \ref{thm:Etnyresurgery} for dimension three, and see Theorem 4.6 in \cite{Koert2} for dimension five), one immediately gets a compatible relative open book on the relative contact pair $(Y^5_{\xi}, M^3_{\eta})$ given by $$(F_{\mathbb{S}^5},F_{\mathbb{S}^3},h_{\mathbb{S}^5}\circ \tau_{\mathbb{L}'},h_{\mathbb{S}^3}\circ \tau_{\mathbb{K}'})$$
		where $\tau_{\mathbb{L}'}$ (resp. $\tau_{\mathbb{K}'}$) denotes the product (composition) of the Dehn twists $\tau^{\mp 1}_{L'_i}$ (resp. $\tau^{\mp 1}_{K'_i}$) about the components $\mathbb{L}'$ (resp. $\mathbb{K}'$) along which the contact $(\pm 1)$-surgeries, resulting $Y^5_{\xi}$ (resp. $M^3_{\eta}$), are performed. 
	\end{proof}
	
	%-----------------------------------------------------------------------------------------------
	%-----------------------------------------------------------------------------------------------
	%-----------------------------------------------------------------------------------------------
	
	\section{Examples}
	\label{sec:examples}
	\begin{example}
		
Consider the admissible Legendrian $2$-link $\mathbb{L}=\bigsqcup_{i=1}^{2} L_{i}$  whose equatorial link $\mathbb{K}$ is the Hopf link such that its square bridge diagram $\Gamma(\mathbb{K})$ is the one given in Figure \ref{Fig:sbd of Hopf link}. The generalized square bridge diagram $\Delta(\mathbb{L})$ of $\mathbb{L}$ is given Figure \ref{Fig:Hopf link}. Using these diagrams, for each $k$, we construct the Legendrian unknot $\gamma_k$ in $(\mathbb{R},\xi_0^3)\subset \mathbb{S}^3_{st}$ and the middle part (Legendrian thickening) $M_k$ of $\gamma_k$ as in Figure \ref{Fig:example1-hopf-middlepart}.
		
		\begin{figure}[H]
			\begin{center}
				\includegraphics[width=0.5\textwidth]{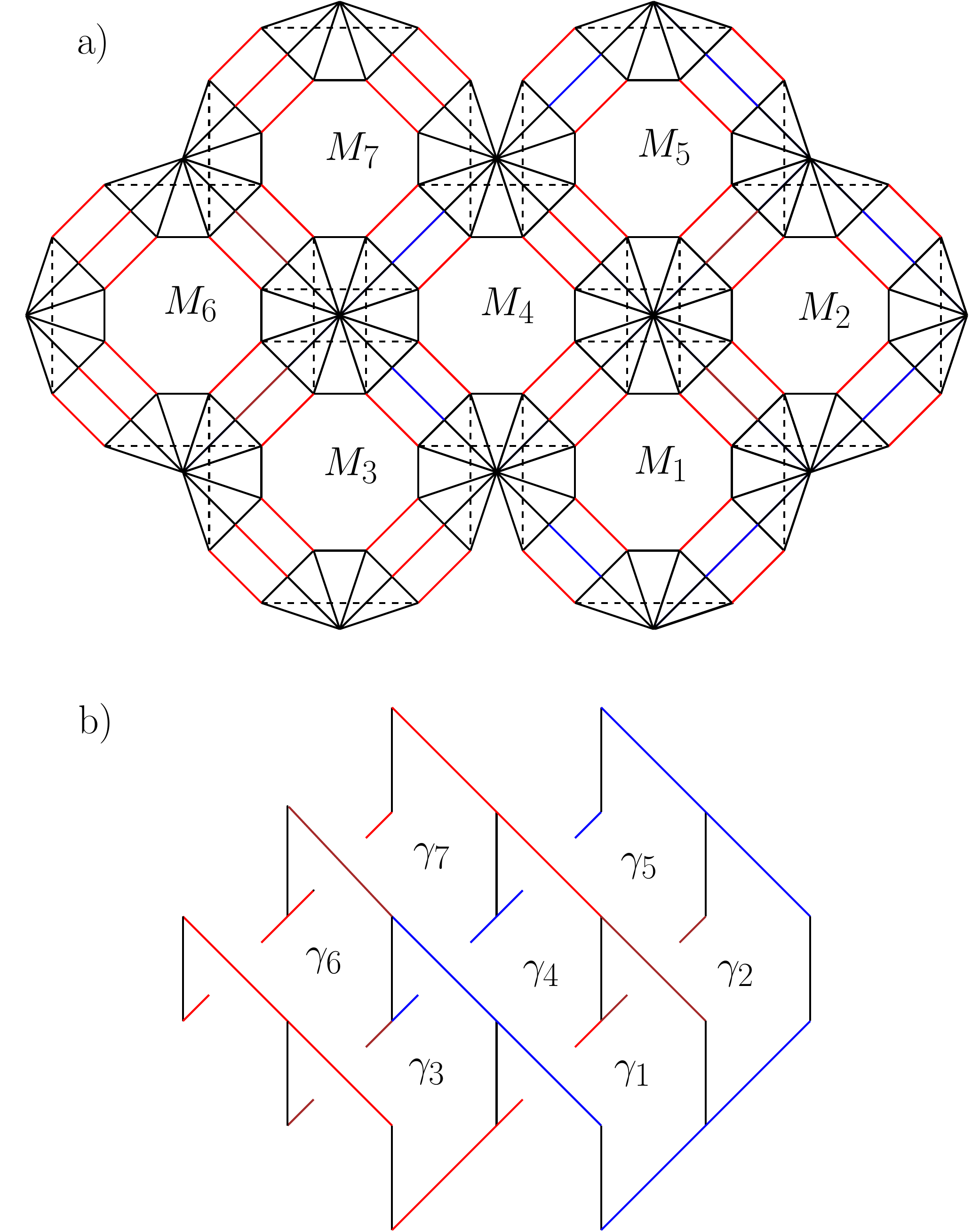}
				\caption{(a) The middle part of the admissible Legendrian $2$-link $\mathbb{L}$, (b) the equatorial link $\mathbb{K}$ of $\mathbb{L}$ (the Hopf link) contained in the core of the $2$-dimensional ribbon.}
				\label{Fig:example1-hopf-middlepart}
			\end{center}
		\end{figure}

Then one can obtain the piecewise smooth (Legendrian) reconstruction $\mathbb{L}'$ (with some overlapping regions) of $\mathbb{L}$ by first building a Legendrian diamond $\mathcal{D}_{k}$ for each square region $R_k$ in $\Gamma(\mathbb{K})$ and then patching their appropriate parts together as exaplained in Proposition \ref{Prop:reconstruction} (see Figure \ref{Fig:reconstruction hopf}). By construction, $\mathbb{L}'$ is contained in the union of $\mathcal{R}_{\mathcal{D}_k}$'s ($4$-dimensional ribbons), and its equator $\mathbb{K}'$ is contained in the union of $\mathcal{R}_{\gamma_k}$'s ($2$-dimensional ribbons).
 
		\begin{figure}[H]
			\begin{center}
				\includegraphics[width=0.5\textwidth]{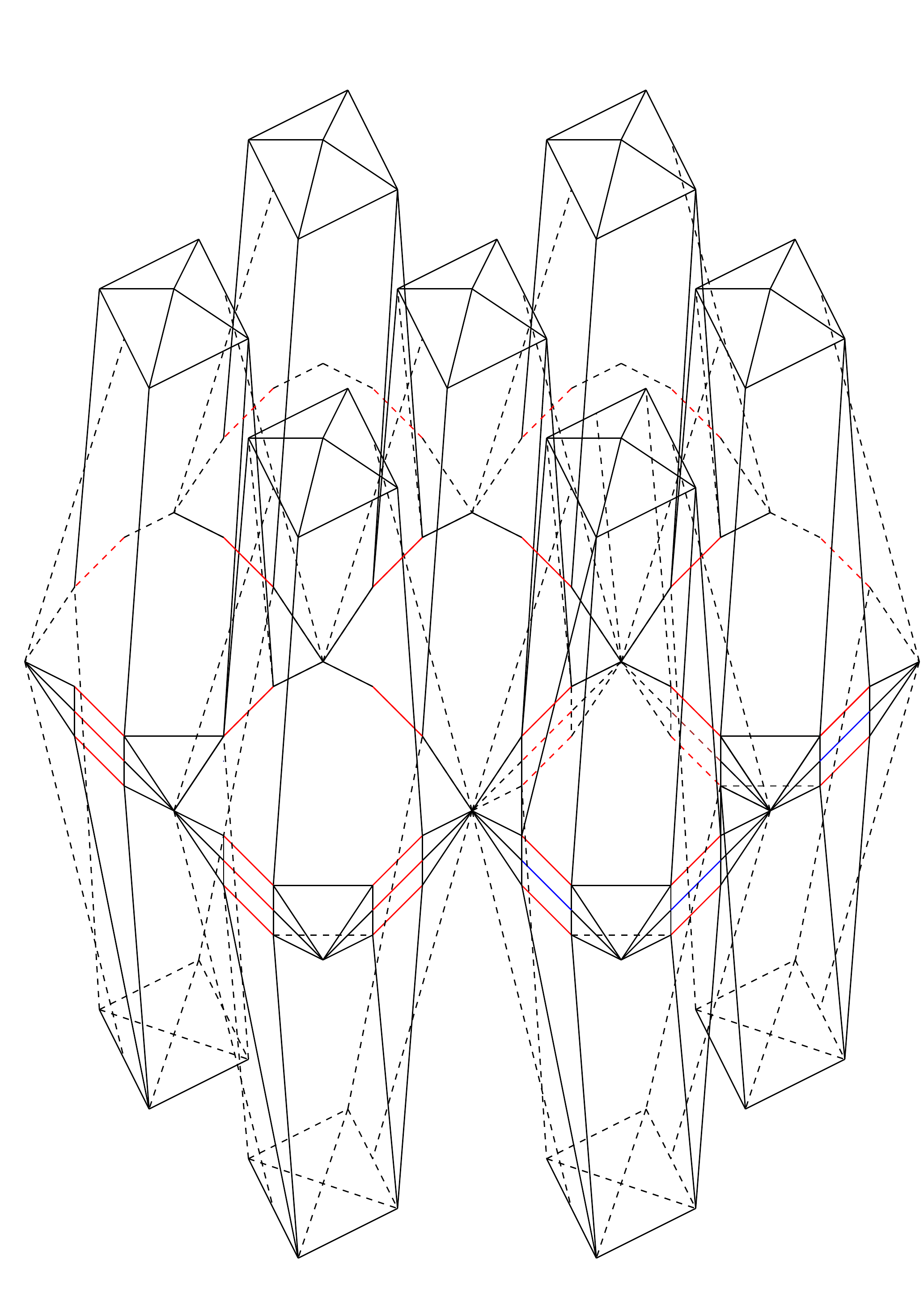}
				\caption{The piecewise smooth Legendrian $2$-link $\mathbb{L}'$ (with overlapping regions) and its equatorial link $\mathbb{K}'$ (a piecewise smooth Hopf link).}
				\label{Fig:reconstruction hopf}
			\end{center}
		\end{figure}
		
When each octahedron is introduced to the generalized square bridge diagram $\Delta(\mathbb{L})$ of $\mathbb{L}$ (step by step) for each square region $R_k$ in $\Gamma(\mathbb{K})$, a new compatible relative open book on  $(\mathbb{S}^5_{st},\mathbb{S}^3_{st})$ is obtained as the result of a positive relative stabilization of the compatible relative open book in the previous step.\\
		
More precisely, the trivial compatible relative open book $\mathcal{OB}_0\doteq(D^4, D^2, id_{D^4}, id_{D^2})$ on $(\mathbb{S}^5_{st},\mathbb{S}^3_{st})$ is considered as an initial step. When we reach the last step $k=7$ , we have the following compatible relative open book on $(\mathbb{S}^5_{st},\mathbb{S}^3_{st})$:
		\begin{eqnarray*}
			(F_{\mathbb{S}^5},F_{\mathbb{S}^3},h_{\mathbb{S}^5},h_{\mathbb{S}^3})&\doteq &\mathcal{OB}_{7}\\
			&=& \mathcal{S}^+[\cdots [\mathcal{S}^+[(D^4, D^2, id_{D^4}, id_{D^2});A_1, A_1\cap D^2];\cdots ];A_7, A_7\cap \bigsqcup_{i=1}^{6} \mathcal{R}_{\gamma_i}]
		\end{eqnarray*}
where $F_{\mathbb{S}^5}=\bigsqcup_{k=1}^{7} \mathcal{R}_{\mathcal{D}_k}$ and  $F_{\mathbb{S}^3}=\bigsqcup_{k=1}^{7} \mathcal{R}_{\gamma_k}$ are the corresponding Weinstein pages, the monodromies $h_{\mathbb{S}^5}=\tau_{\mathcal{D}_1}\circ\cdots \circ\tau_{\mathcal{D}_7}$ and $h_{\mathbb{S}^3}=\tau_{\gamma_1}\circ\cdots \circ\tau_{\gamma_7}$ are the product of right-handed Dehn twists along $\mathcal{D}_k$ and $\gamma_k$, respectively, and $A_{k}$ is the properly embedded Lagrangian $2$-disk  in the ribbon $\mathcal{R}_{(\bigsqcup_{i=1}^{k-1}M_i)\cap M_{k}}$ as described in Proposition \ref{prop:Construction_via_stabilizations} for $k=1, \cdots, 7$. Note that rounded (smoothened) $\mathbb{L}'$ and $\mathbb{K}'$ are smoothly embedded in the cores $\bigsqcup_{k=1}^{7} \mathcal{D}_k$ and $\bigsqcup_{k=1}^{7} \gamma_k$ of $F_{\mathbb{S}^5}=\bigsqcup_{k=1}^{7} \mathcal{R}_{\mathcal{D}_k}$ and  $F_{\mathbb{S}^3}=\bigsqcup_{k=1}^{7} \mathcal{R}_{\gamma_k}$, respectively.\\
		
Let $(Y_{\xi}^5, M_{\eta}^3)$ be any admissible relative contact pair obtained by an admissible relative contact $(\pm 1)$-surgery along the given admissible link $\mathbb{L}$ in $\mathbb{S}_{st}^5$. By Theorem \ref{thm:contact surgery}, there exists a compatible relative open book $$(\mathcal{OB}_Y, \mathcal{OB}_M)\doteq (F_{\mathbb{S}^5},F_{\mathbb{S}^3},h_{\mathbb{S}^5}\circ\tau_{\mathbb{L}'},h_{\mathbb{S}^3}\circ\tau_{\mathbb{K}'})$$ on $(Y_{\xi}^5, M_{\eta}^3)$ with Weinstein pages where $\tau_{\mathbb{L}'}$ and $\tau_{\mathbb{K}'}$ denote the product of the Dehn twists $\tau_{\mathbb{L}_i'}^{\mp 1}$ and $\tau_{\mathbb{K}_i'}^{\mp 1}$ as described in the previous section when the contact $(\pm 1)$-surgeries are applied.
		
	\end{example}
	\begin{example}
Let $L$ be the admissible standard loose Legendrian $2$-unknot  in $(\mathbb{R}^5, \xi_0^5)\subset \mathbb{S}^5_{\xi_{st}}$ which is obtained by taking $\mathbb{S}^1$-symmetry of the Legendrian $1$-unknot $K$ in $(\mathbb{R}^3, \xi_0^3)\subset \mathbb{S}^3_{\xi_{st}}$ with $tb(K)=-3$ as depicted in Figure \ref{Fig:Loose Legendrian}.
		\begin{figure}[H]
			\begin{center}
				\includegraphics[width=0.4\textwidth]{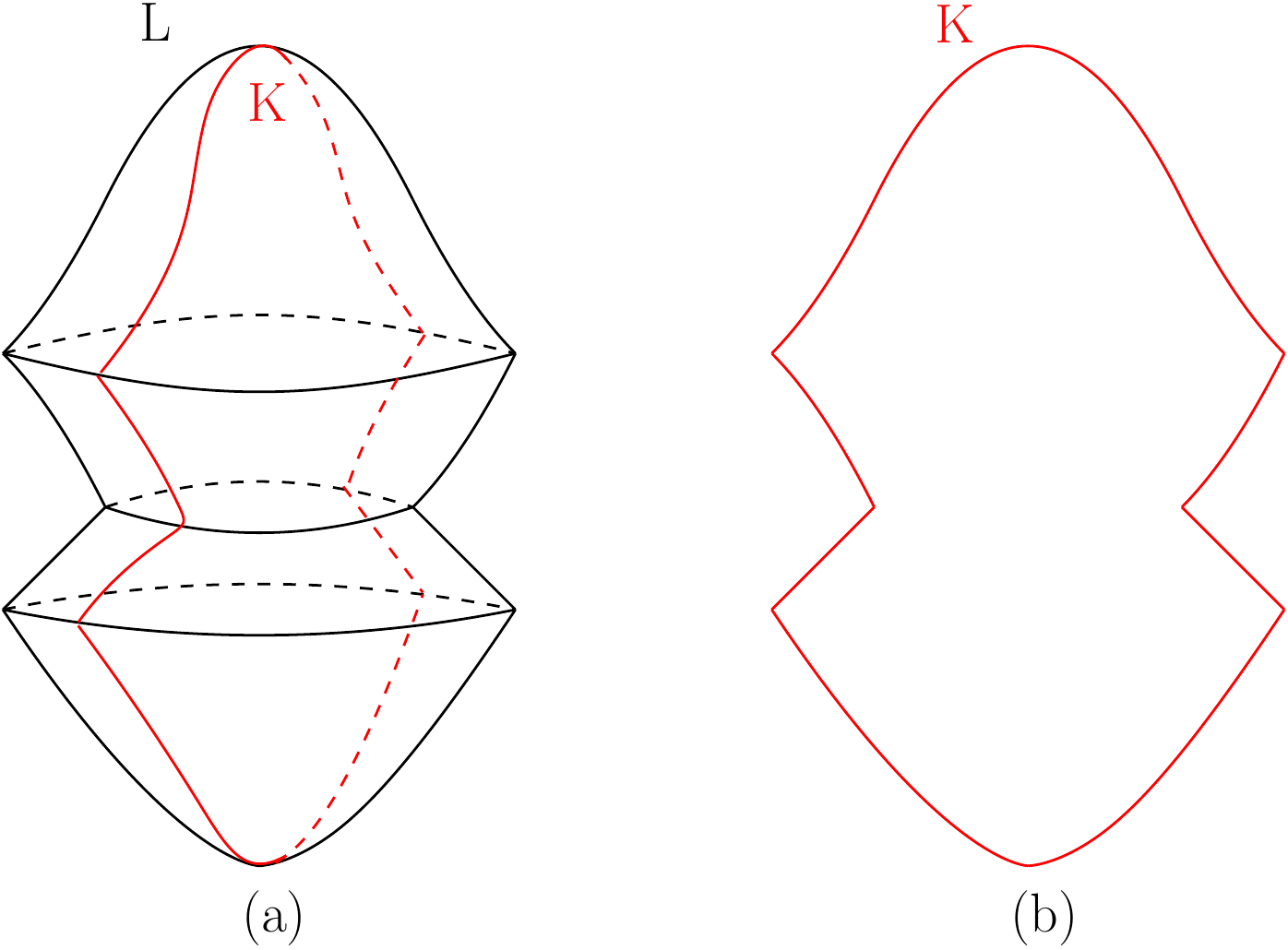}
				\caption{(a) The front of the admissible standard loose Legendrian $2$-unknot $L$, (b) the front of its isotropic equator $K$.}
				\label{Fig:Loose Legendrian}
			\end{center}
		\end{figure}
		
The generalized square bridge diagram $\Delta(L)$ is given in Figure \ref{Fig:loosegsbd}. By using this diagram, one can obtain the piecewise smooth representative $L'$ (as depicted in Figure \ref{Fig:loosediamond}) by first constructing a Legendrian diamond $\mathcal{D}_k$ for each square region $R_k$ in $\Gamma(K)$, and then patching their appropriate parts together as exaplained in Proposition \ref{Prop:reconstruction}.
		\begin{figure}[H]
			\begin{center}
				\includegraphics[width=0.20\textwidth]{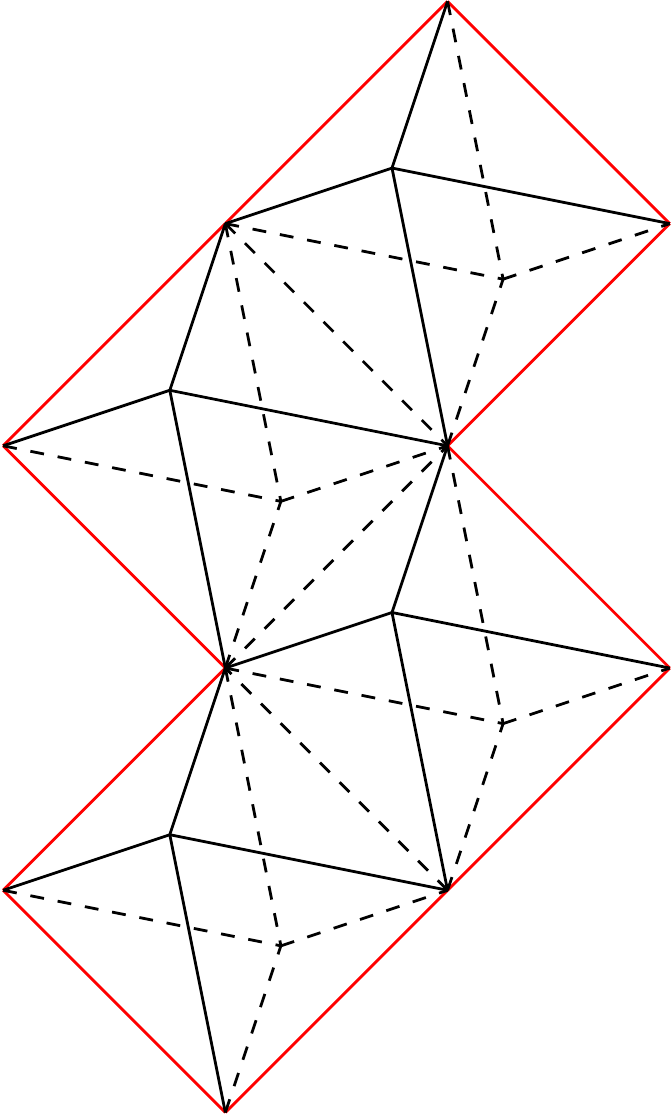}
				\caption{The generalized square bridge diagram of  the admissible standard loose Legendrian $2$-unknot $L$.}
				\label{Fig:loosegsbd}
			\end{center}
		\end{figure}
		\begin{figure}[H]
			\begin{center}
				\includegraphics[width=0.4\textwidth]{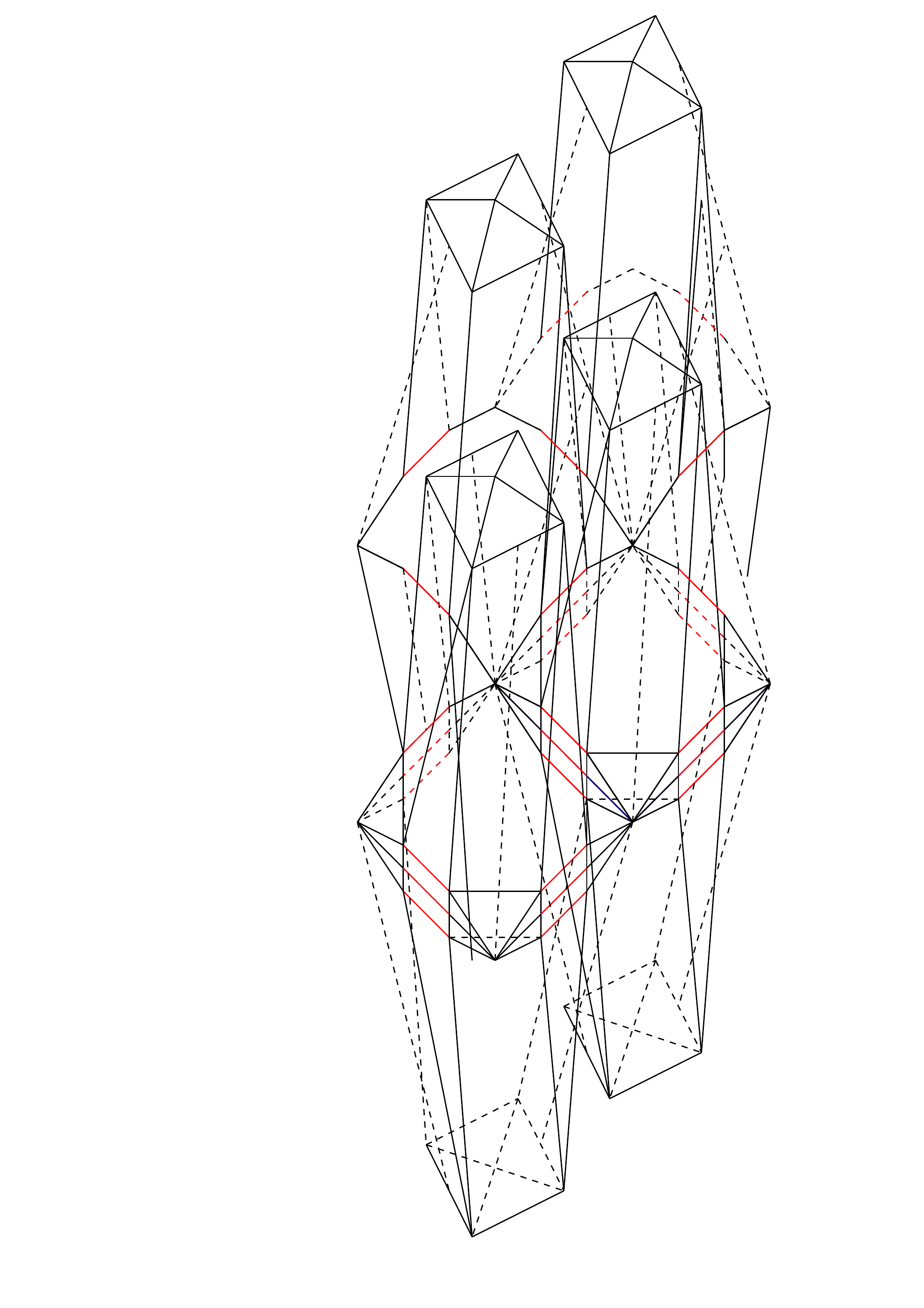}
				\caption{The piecewise smooth standard loose Legendrian $2$-unknot $L'$.}
				\label{Fig:loosediamond}
			\end{center}
		\end{figure}
		
As before, we consider the trivial compatible relative open book $\mathcal{OB}_0\doteq(D^4, D^2, id_{D^4}, id_{D^2})$ on $(\mathbb{S}^5_{st},\mathbb{S}^3_{st})$ when no squares is introduced. Then for each new octahedron, a new compatible relative open book on $(\mathbb{S}^5_{st},\mathbb{S}^3_{st})$ is obtained by positively stabilizing the compatible open book in the previous step. At the last step $k=4$, the following compatible open book
		\begin{eqnarray*}
			(F_{\mathbb{S}^5},F_{\mathbb{S}^3},h_{\mathbb{S}^5},h_{\mathbb{S}^3})&\doteq &\mathcal{OB}_{4}\\
			&=& \mathcal{S}^+[\cdots [\mathcal{S}^+[(D^4, D^2, id_{D^4}, id_{D^2});A_1, A_1\cap D^2];\cdots ];A_4, A_4\cap \bigsqcup_{i=1}^{3} \mathcal{R}_{\gamma_i}]
		\end{eqnarray*}
on $(\mathbb{S}^5_{st},\mathbb{S}^3_{st})$ is obtained where $F_{\mathbb{S}^5}=\bigsqcup_{k=1}^{4} \mathcal{R}_{\mathcal{D}_k}$ and  $F_{\mathbb{S}^3}=\bigsqcup_{k=1}^{4}\mathcal{R}_{\gamma_k}$ are the corresponding Weinstein pages, $h_{\mathbb{S}^5}=\tau_{\mathcal{D}_1}\circ\cdots \circ\tau_{\mathcal{D}_4}$ and $h_{\mathbb{S}^3}=\tau_{\gamma_1}\circ\cdots \circ\tau_{\gamma_4}$ are the product of right-handed Dehn twists along $\mathcal{D}_k$ and $\gamma_k$, respectively, and $A_{k}$ is the properly embedded Lagrangian $2$-disk  in the ribbon $\mathcal{R}_{(\bigsqcup_{i=1}^{k-1}M_i)\cap M_{k}}$ as described in Proposition \ref{prop:Construction_via_stabilizations} for $k=1,2,3,4$. The smoothening of $L'$ (resp. $K'$) are contained in the cores of $F_{\mathbb{S}^5}$ (resp. $F_{\mathbb{S}^3}$).\\

Now, as an example, consider the admissible relative contact pair $(Y_{\xi}^5, M_{\eta}^3)$ obtained by applying an admissible relative contact $(+1)$-surgery along the given admissible link $L$ in $\mathbb{S}_{st}^5$. By Theorem \ref{thm:contact surgery}, there exists a compatible relative open book $$(\mathcal{OB}_Y, \mathcal{OB}_M)\doteq (F_{\mathbb{S}^5},F_{\mathbb{S}^3},h_{\mathbb{S}^5}\circ\tau_{L'}^{-1},h_{\mathbb{S}^3}\circ\tau_{K'}^{-1})$$ on $(Y_{\xi}^5, M_{\eta}^3)$ . Note that $Y_{\xi}^5$ is overtwisted by Casal, Murphy and Presas \cite{CMP} since it is obtained by contact $(+1)$-surgery along a loose Legendrian $2$-knot $L'$. In particular, it contains the overtwisted contact submanifold  $M_{\eta}=(L(2,1)=\mathbb{RP}^3,\eta)$ which is obtained by contact $(+1)$-surgery along the equatorial $1$-knot $K'$.
	\end{example}

	%-----------------------------------------------------------------------------------------------
	%-----------------------------------------------------------------------------------------------
	%-----------------------------------------------------------------------------------------------

\end{document}